\numberwithin{equation}{section}
\theoremstyle{plain}
\newtheorem{lemma}{Lemma}[section]
\newtheorem{theorem}[lemma]{Theorem}
\newtheorem{corollary}[lemma]{Corollary}
\newtheorem{proposition}[lemma]{Proposition}
\newtheorem{definition}[lemma]{Definition}
\newtheorem*{proposition*}{Proposition}
\newtheorem*{theorem*}{Theorem}
\newtheorem*{definition*}{Definition}
\newtheorem*{claim*}{Claim}
\newtheorem{notation}[lemma]{Notation}
\newtheorem{thmx}{Theorem}
\newtheorem{remarks}[lemma]{Remarks}
\newtheorem{remark}[lemma]{Remark}
\newtheorem{point}[lemma]{}
\newcommand{\SGr}{\mathbf{SGr}}
\newcommand{\N}{{\mathbb{N}}}
\newcommand{\Z}{{\mathbb{Z}}}
\newcommand{\ol}{\overline}
\newcommand{\uloopr}[1]{\ar@'{@+{[0,0]+(-4,5)}@+{[0,0]+(0,10)}@+{[0,0] +(4,5)}}^{#1}}
\newcommand{\uloopd}[1]{\ar@'{@+{[0,0]+(5,4)}@+{[0,0]+(10,0)}@+{[0,0]+ (5,-4)}}^{#1}}
\newcommand{\dloopr}[1]{\ar@'{@+{[0,0]+(-4,-5)}@+{[0,0]+(0,-10)}@+{[0, 0]+(4,-5)}}_{#1}}
\newcommand{\dloopd}[1]{\ar@'{@+{[0,0]+(-5,4)}@+{[0,0]+(-10,0)}@+{[0,0 ]+(-5,-4)}}_{#1}}
\newcommand{\luloop}[1]{\ar@'{@+{[0,0]+(-8,2)}@+{[0,0]+(-10,10)}@+{[0, 0]+(2,2)}}^{#1}}
\newcommand{\Ifree}{I_{\mathrm{free}}}
\newcommand{\Ireg}{I_{\mathrm{reg}}}
\newcommand{\Ered}{E_{\mathrm{red}}}
\DeclareMathOperator{\rL}{L}
\renewcommand{\_}{\ \cdot\ }
\title[Realization for finitely generated refinement monoids]{\textbf{The realization problem for finitely generated refinement monoids}}
\date{}
\begin{document}

\author{Pere Ara}\author{Joan Bosa}
\address{Departament de Matem\`atiques, Universitat Aut\`onoma de Barcelona,
08193 Bellaterra (Barcelona), Spain.} \email{para@mat.uab.cat}\email{jbosa@mat.uab.cat}

\author{Enrique Pardo}
\address{Departamento de Matem\'aticas, Facultad de Ciencias, Universidad de C\'adiz,
Campus de Puerto Real, 11510 Puerto Real (C\'adiz),
Spain.}\email{enrique.pardo@uca.es}\urladdr{https://sites.google.com/a/gm.uca.es/enrique-pardo-s-home-page/}

\thanks{The three authors were partially supported by the DGI-MINECO and European Regional Development Fund, jointly, through the grant MTM2017-83487-P.
The first and second authors were partially supported by the Generalitat de Catalunya through the grant 2017-SGR-1725. The second author was partially supported by the Beatriu de Pinós postdoctoral programme of the Government of Catalonia's Secretariat for Universities and Research of the Ministry of Economy and Knowledge (BP2017-0079).
The third author was partially supported by PAI III grant FQM-298 of the Junta de Andaluc\'{\i}a.}
\subjclass[2010]{Primary 16D70, Secondary 16E50, 06F20, 19K14, 20K20, 46L05}
\keywords{von Neumann regular ring, refinement monoid, realization problem, universal localization.}
\date{\today}
\begin{abstract}
	We show that every finitely generated conical refinement monoid can be represented as the monoid $\mathcal V (R)$ of isomorphism classes of finitely generated projective modules over a von Neumann regular ring $R$. 
	To this end, we use the representation of these monoids provided by {\it adaptable} separated graphs. 
	Given an adaptable separated graph $(E,C)$ and a field $K$, we build a  von Neumann regular $K$-algebra $Q_K(E,C)$ and show that there is a natural isomorphism between the separated graph monoid $M(E,C)$ and the monoid $\mathcal V (Q_K(E,C))$. 
\end{abstract}

\dedicatory{Dedicated to the memory of Antonio Rosado P\'erez}

\maketitle
\tableofcontents


\section*{Introduction}

For a unital ring $R$, let $\mathcal V (R)$ denote the commutative monoid of isomorphism classes of finitely generated projective right $R$-modules with the operation given by $[A]+[B]= [A\oplus B]$.
The commutative monoid $\mathcal V (R)$ is always conical (i.e., it satisfies the axiom $x+y=0\implies x=y=0$ for $x,y\in \mathcal V (R)$),  and has an order-unit given by the class of the regular 
module $[R_R]$ in $\mathcal V (R)$.
By results of Bergman \cite[Theorems 6.2 and 6.4]{Bergman} and Bergman and Dicks \cite[page 315]{BD} every conical monoid with an order-unit can be realized in the form 
$\mathcal V (R)$ for some unital hereditary ring $R$.
The monoid $\mathcal V (R)$ can also be defined in the non-unital case (see Subsection \ref{subsec:RingsandAlgebras} below), and it has been shown by Goodearl and the first-named author \cite{AG12}
that every conical monoid is isomorphic to the monoid $\mathcal V (R)$ of a possibly non-unital hereditary ring $R$.  

The purpose of this paper is to show that every finitely generated conical refinement monoid can be realized as the monoid $\mathcal V (R)$ for some (unital) von
Neumann regular ring $R$ (see Theorem B below). This result follows immediately from our main result (Theorem A), and the representation theorem for finitely generated conical refinement monoids in terms of combinatorial data 
obtained in \cite{ABP}.     

The realization question for (von Neumann) regular rings was posed by K. R. Goodearl in \cite{directsum}. Indeed, Goodearl formulated there the following fundamental open problem:
``Which monoids arise as $\mathcal V(R) $ for regular rings $R$?". Because of the abundance of idempotents in regular rings, the knowledge of the structure of $\mathcal V (R)$ is a vital
piece of information for a regular ring $R$. For instance, $\mathcal V (R)$ contains full information on the lattice of ideals of $R$ (see \cite[Proposition 7.3]{GW}). 
For a regular ring $R$ it is well-known that the monoid $\mathcal V (R)$ satisfies the Riesz refinement axiom (see \cite[Theorem 2.8]{vnrr})
and this was the only additional property that was known at the time the above fundamental problem was formulated. An example of a conical refinement monoid of size $\aleph_2$ which cannot be realized as the 
$\mathcal V$-monoid of any regular ring was given by Wehrung in \cite{Weh}. It is still an open problem whether all the conical refinement monoids of size $\le \aleph_1$ can be realized by regular rings. 
The countable case is especially interesting since most direct sum decomposition problems involve only countably many modules. We refer the reader to \cite{Areal} for a survey on the realization problem for
regular rings.

A systematic approach to the realization problem was initiated in \cite{AB} through the consideration of {\it graph monoids}. Given a directed graph $E$ such that each vertex emits only a finite number of edges,  
the graph monoid $M(E)$ 
is the graph generated by elements $a_v$, with $v\in E^0$, subject to the relations $a_v= \sum _{e\in E^1: s(e)= v} a_{r(e)}$ for each vertex $v$ which is not a sink. 
By \cite[Proposition 4.4]{AMFP}, the graph monoid $M(E)$ is a conical refinement monoid, and it was shown in \cite{AB} that, for each fixed field $K$, there exists   
a von Neumann regular $K$-algebra $Q_K(E)$ such that $\mathcal V (Q_K(E)) \cong M(E)$. This immediately raised the question of whether all 
finitely generated conical refinement monoids can be 
represented as graph monoids. The answer to this question is negative even for antisymmetric refinement monoids, the most basic counter-example is the monoid 
$M= \langle p,a,b \mid p= p+a= p+b \rangle$ which was proved to not even be a retract of a graph monoid in \cite{APW08}. 
Another crucial step towards the  solution of the realization question, covering in particular the monoid $M$ just described, was provided by the first-named  author in \cite{Ara10}. Indeed, 
he showed that the realization problem has a positive answer for any finitely generated antisymmetric conical refinement monoid with all its prime elements free.

Although \cite{Ara10} covers a large class of examples, it became clear that a better combinatorial model was needed in order 
to understand the complexity of all finitely generated conical refinement monoids. After the work done in \cite{AP16} and \cite{AP17} (based on previous work by Pierce \cite{Pierce}, Dobbertin \cite{Dobb84} and Brookfield \cite{Brook}),
the main missing combinatorial tool was discovered in \cite{ABPS} and \cite{ABP}. The key idea is revealed through the consideration of the monoid $M$ described above, which is not a graph monoid. 
If we consider the following graph $E$: 
	\begin{multicols}{1}
	\hspace{6cm}\begin{tikzpicture}[->,>=stealth',shorten >=1pt,auto,node distance=1.5cm,
		thick,main node/.style={circle,draw,font=\small\bfseries}]
		
		\node (0) {};
		\node[main node] (1) [below left of=0]{p};
		\node[main node] (2) [below left of=1] {a};
		\node[main node] (3) [below right of=1] {b};
		
		\path
		(1) edge [out=90,in=145,looseness=8, color={blue} ] node {}  (1)
		edge [out=85,in=30,looseness=8, dashed, color={red}] node {}  (1)
		edge [bend right, color={blue}] node {} (2)
		edge [bend left, dashed, color={red}] node {} (3);
		\end{tikzpicture}
	\end{multicols}
\noindent along with a partition of the set of edges of $E$ into two classes, the ones with continuous lines, and the ones with dashed lines, we can localize 
the relations of the graph monoid to each set of the edge partition and obtain indeed the
two required relations $p= p+a$ and $p= p+b$. In general one defines a {\it separated graph} as a pair $(E,C)$, where $E$ is a directed graph and $C= \bigsqcup_{v\in E^0} C_v$ is a partition of $E^1$ which is finer than the partition $\{ s^{-1}(v)\mid v\in s(E^1)\}$, induced by the source map $s$.
Given a separated graph $(E,C)$ with $ |X|<\infty$ for all $X\in C$, we define the monoid $M(E,C)$ as the monoid generated by $a_v$, $v\in E^0$, with the 
relations $a_v= \sum _{x\in X} a_{r(x)}$ for all $v\in E^0$ and $X\in C_v$ (see Definition \ref{defsepgraph}).

The class of all separated graph monoids is too large for our purposes, and indeed it contains non-refinement monoids (see \cite[Section 5]{AG12}). 
In order to deal with our realization question, a special class is required, and this is precisely the
class of all {\it adaptable separated graphs} (introduced in \cite{ABP} and \cite{ABPS}), see Definition \ref{def:adaptable-sepgraphs} below for the precise definition. 

We can now state the main result of the paper:

\begin{thmx} \label{ThmA}
	Let $(E,C)$ be an adaptable separated graph and let $K$ be a field. Then there exists a von Neumann regular $K$-algebra $Q_K(E,C)$ and a natural monoid isomorphism 
	$$M(E,C) \linebreak \to \mathcal V(Q_K(E,C)).$$	
\end{thmx}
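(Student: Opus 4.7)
The plan is to construct $Q_K(E,C)$ as an iterated universal localization starting from a separated-graph Cohn-type $K$-algebra, and then to prove the required isomorphism and von Neumann regularity by induction along the hierarchy that adaptability provides. This synthesizes the construction in \cite{AB} for ordinary graph monoids with the techniques of \cite{Ara10} for the antisymmetric free case, now applied compatibly across the partition $C$.

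Concretely, I would first form the separated Cohn path $K$-algebra of $(E,C)$, whose $\mathcal{V}$-monoid in general surjects onto $M(E,C)$ with a nontrivial kernel, and then invert, via universal localization, precisely those matrices that turn each relation $a_v = \sum_{x \in X} a_{r(x)}$ into an actual module isomorphism $p_v Q \cong \bigoplus_{x \in X} p_{r(x)} Q$. The natural monoid map $\varphi \colon M(E,C) \to \mathcal{V}(Q_K(E,C))$ is then defined by $a_v \mapsto [p_v Q_K(E,C)]$; it is well-defined by construction. Surjectivity of $\varphi$ reduces to a normal-form statement: once regularity is in hand, every finitely generated projective over $Q_K(E,C)$ decomposes, up to equivalence of idempotents, as a direct sum of modules of the form $p_v Q_K(E,C)$.

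The heart of the argument is the simultaneous verification of injectivity of $\varphi$ and von Neumann regularity of $Q_K(E,C)$, which I would carry out by induction along the adaptable stratification. The adaptable data provides a finite partial order on $E^0$ whose strata come in two flavors: \emph{free} strata, carrying loops and contributing ideals built from universal localizations of free $K$-algebras, and \emph{regular} strata, contributing matrix rings of Leavitt-like type. This translates into matching filtrations of $M(E,C)$ and of $Q_K(E,C)$. At each step I would invoke the fact that universal localization of a hereditary $K$-algebra at a set of full matrices preserves hereditariness, together with the observation that the specific matrices inverted --- read directly off the sets $X \in C_v$ attached to the current stratum --- produce a regular algebra. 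The change in $\mathcal{V}$ under each localization would be tracked via Schofield's exact sequence, and the adaptability axioms would be used to show that the identifications introduced at each level match exactly the relations added to $M(E,C)$ at that level, yielding injectivity step by step.

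The principal obstacle I foresee is in this last inductive step: one must maintain von Neumann regularity while ensuring that the $\mathcal{V}$-monoid of the localization acquires no identifications beyond those already present in $M(E,C)$. The adaptability axioms ought to be just strong enough to balance these demands --- the free strata are flexible enough for universal localizations of free $K$-algebras to become regular, while the regular strata are rigid enough to keep $\mathcal{V}$ under tight control --- but verifying the compatibility of localizations across strata, and closing the induction uniformly in the combinatorial data of $(E,C)$, is where all the technical weight of the proof will concentrate.
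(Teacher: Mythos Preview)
Your proposal correctly identifies the construction of $Q_K(E,C)$ as a universal localization and the general spirit of combining \cite{AB} with \cite{Ara10}, but the inductive mechanism you describe has a genuine gap, and the paper's actual route is substantially different from a direct induction on the poset $I$.

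The difficulty you flag in your last paragraph is real and is not resolved by Schofield's exact sequence alone. That sequence controls $K_0$ under localization, but here one must control the monoid $\mathcal V$, not its Grothendieck group, and one must simultaneously establish von Neumann regularity. Neither follows from a direct stratum-by-stratum localization argument on $(E,C)$. The paper explicitly observes that the $\mathcal V$-functor does \emph{not} in general commute with pullbacks of rings, so any inductive reconstruction must be accompanied by a verification of a $K_1$-surjectivity condition (Theorem~\ref{thm:K1Pullbacktheorem}, from \cite[Theorem 3.2]{Ara10}); your proposal has no analogue of this step. Moreover, at a free prime $v$ with $|C_v|>1$ the different sets $X\in C_v$ interact in a way that a single localization-and-filtration step does not untangle: this is exactly why the paper cannot run an induction on $I$ directly.

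What the paper does instead is a two-stage detour. First (Section~\ref{sect:covermap}) it replaces $(E,C)$ by an auxiliary adaptable separated graph $(\tilde E,\tilde C)$ satisfying condition~{\rm (F)}, connected to $(E,C)$ by a cover map; condition~{\rm (F)} forces the poset $\tilde I$ to be a forest and, crucially, makes each strongly connected component receive edges from at most one predecessor part. Second (Section~\ref{sec:BuildingBlocks}), $(\tilde E,\tilde C)$ is rebuilt as an iterated pullback of \emph{non-separated} ``building block'' graphs, for which the realization theorem is already known via \cite{AB} and \cite{A18}; the required $K_1$-condition is then checked explicitly (Proposition~\ref{prop:conditionforK1}) so that $\mathcal V$ commutes with these pullbacks. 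Finally (Section~\ref{sec:Pushouts}), one descends from $(\tilde E,\tilde C)$ back to $(E,C)$ through a chain of \emph{crowned pushouts}, using \cite[Proposition 4.5]{Ara10} to show that regularity and the monoid isomorphism are preserved at each step. Your proposal contains none of this machinery, and without the passage through condition~{\rm (F)} and the building-block decomposition there is no apparent way to close the induction you sketch.
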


Using a result from \cite{ABP} and Theorem \ref{ThmA} we obtain:

\begin{thmx}\label{ThmB}
	Let $M$ be a finitely generated  conical refinement monoid and let $K$ be a field. Then there exists a von Neumann regular (unital) $K$-algebra $R$ such that  $M\cong \mathcal V(R)$. 
\end{thmx}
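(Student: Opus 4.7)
The plan is to deduce Theorem B from Theorem A together with the combinatorial representation theorem from \cite{ABP} cited in the introduction. That representation asserts that every finitely generated conical refinement monoid $M$ is isomorphic to the separated graph monoid $M(E,C)$ of some adaptable separated graph $(E,C)$. Because $M$ is finitely generated, I expect the representing $(E,C)$ to have finite vertex set (and, by the requirement that $|X|<\infty$ for each $X\in C$, finite edge set as well), which will be the crucial ingredient for producing a \emph{unital} $R$.

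First I would invoke \cite{ABP} to fix a finite adaptable separated graph $(E,C)$ together with a monoid isomorphism $M \cong M(E,C)$. Next I would apply Theorem A to $(E,C)$ and the given field $K$ to obtain a von Neumann regular $K$-algebra $Q_K(E,C)$ and a natural monoid isomorphism $M(E,C) \to \mathcal{V}(Q_K(E,C))$. Composing the two yields $M \cong \mathcal{V}(Q_K(E,C))$, so one can take $R := Q_K(E,C)$.

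The remaining point to verify is unitality. Since $Q_K(E,C)$ (to be constructed earlier in the paper, in the spirit of a Leavitt/regular-completion algebra) will carry pairwise orthogonal vertex idempotents $\{p_v\}_{v\in E^0}$, with all generators lying in $\bigcup_{v,w} p_v\, Q_K(E,C)\, p_w$, the element $1 := \sum_{v\in E^0} p_v$ will be a unit whenever $E^0$ is finite. So I would make sure to quote a form of the \cite{ABP} theorem that guarantees finiteness of $E^0$ (indeed, this is natural from finite generation of $M$, since each generator $a_v$ corresponds to a vertex).

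The main obstacle is not in this deduction itself, which amounts to a short composition plus a finiteness/unitality check, but in the two ingredients it invokes: the construction and analysis of $Q_K(E,C)$ carried out in Theorem A, and the existence of a finite adaptable presentation of $M$ supplied in \cite{ABP}. Granting both, the proof of Theorem B is essentially a one-line argument.
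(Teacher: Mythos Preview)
Your overall strategy---invoke \cite{ABP} to realize $M$ as $M(E,C)$ for an adaptable separated graph, then apply Theorem~A---is exactly the paper's. The divergence is in the unitality step, and there your argument has a genuine gap.

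You assume that the adaptable separated graph $(E,C)$ produced by \cite{ABP} has finite vertex set, so that $\sum_{v\in E^0} v$ is a unit for $Q_K(E,C)$. But the definition of an adaptable separated graph (Definition~\ref{def:adaptable-sepgraphs}) only requires the poset $I=E^0/{\sim}$ to be finite; for each regular prime $p\in\Ireg$ the subgraph $E_p$ is merely a strongly connected row-finite graph, which may well have infinitely many vertices. The paper is explicit about this: immediately after stating Theorem~B it says ``The algebra $Q_K(E,C)$ might be non-unital''. So the finiteness of $E^0$ that you hope to quote from \cite{ABP} is not available in general, and your unit $\sum_{v\in E^0} v$ need not exist.

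The paper's fix is the standard corner trick: since $M$ is finitely generated it has an order-unit $u$ (e.g., the sum of a finite generating set). Choose an idempotent $e\in M_\infty(Q_K(E,C))$ with $[e]=u$ under the isomorphism $M\cong \mathcal V(Q_K(E,C))$, and set $R:= e\,M_\infty(Q_K(E,C))\,e$. This $R$ is a unital von Neumann regular $K$-algebra with $\mathcal V(R)\cong M$. You should replace your finiteness argument with this order-unit step.
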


We can provide right away the proof of Theorem B (assuming Theorem \ref{ThmA} has been proved). Let $M$ be a finitely generated conical refinement monoid. By \cite[Theorem (2)]{ABP}, there exists an 
adaptable separated graph $(E,C)$ such that $M\cong M(E,C)$.
By Theorem \ref{ThmA}, we have $$\mathcal V (Q_K(E,C)) \cong M(E,C)\cong M$$ 
for the von Neumann regular $K$-algebra $Q_K(E,C)$ introduced in Section \ref{sect:algebras}.  
The algebra $Q_K(E,C)$ might be non-unital, but it can be replaced by a unital one
using a standard trick. Indeed, observe that $M$ has an order-unit, for instance the sum of all elements in a finite generating set is an order-unit for $M$. Let $e$ be a projection in $M_{\infty} (Q_K(E,C))$ 
corresponding to the order-unit 
through the isomorphism $M\cong \mathcal V (Q_K(E,C))$. Then we have that $Q: = eM_{\infty} (Q_K(E,C)) e$ is  a unital regular $K$-algebra with $\mathcal V (Q)\cong M$.    

\medskip

We now briefly discuss the realization problem for countable refinement monoids in the light of our present achievement.  
Goodearl and the first-named author made in \cite{AG15} a fundamental division in the class of all conical refinement monoids. Namely, they defined the class of {\it tame refinement monoids} 
as the class of those monoids which can be written as
a direct limit of finitely generated refinement monoids. A refinement monoid is {\it wild} if it is not tame. There are some fundamental distinctions between the classes of tame and wild refinement monoids.
All tame refinement monoids are well-behaved, in particular they are separative, unperforated and satisfy the Riesz interpolation property (see \cite[Section 3]{AG15} for details).
On the other hand, countable wild refinement monoids can fail to satisfy any of the above properties.
With respect to the realization problem, both classes seem to behave differently too. While it is conceivable --and plausible given the result in the present paper-- that any countable tame conical refinement monoid 
can be represented as the monoid $\mathcal V(R)$
of a regular $K$-algebra $R$ for an arbitrary field $K$, 
there are known examples of countable wild conical refinement monoids $M$ which are not representable by a regular $K$-algebra for any {\it uncountable} field $K$ (see \cite[Section 4]{Areal}).
Indeed a sufficient condition for this to happen is that $M$ is a conical non-cancellative refinement monoid with order-unit admitting a faithful state. 
An explicit example of such a wild refinement monoid is studied in detail in \cite{AG17} in connection with the semigroup algebra of the monogenic free inverse monoid.      
A natural next step in the realization problem is to extend the methods of the present paper to the study of the realization of homomorphisms between two finitely generated conical refinement monoids, with the 
objective of showing a realization theorem for the class of all the countable tame conical refinement monoids. 
Advances in the realization problem for wild monoids have been scattered through the literature up to this moment. Some interesting constructions in this direction are contained in \cite{AG17} and in \cite{Ruz}. 
See also \cite{BS} for realization results for semiartinian regular rings, and \cite{Vas} for realization results in the setting of graded algebras.  

\medskip

In the next subsection we briefly discuss the strategy we follow for the proof of our main result.

\subsection*{\sc Presentation of the techniques}\hfill \\

As already recalled above, our basic tool is the notion  of an adaptable separated graph (see Definition \ref{def:adaptable-sepgraphs}). 
The structure of such object $(E,C)$ is shaped by the poset $I:=E^0/{\sim}$. Indeed, let $\le$ be the pre-order relation on $E^0$ defined by $v\le w$ if there is a directed path from $w$ to $v$, and $\sim$ be the equivalence relation on $E^0$ defined by $v\sim w$ if $v\le w$ and $w\le v$. Then, $I:=E^0/{\sim}$ is a poset with respect to the partial order induced 
by the pre-order $\le$ on $E^0$.

We now give a brief sketch of the proof of Theorem \ref{ThmA}. In broad outline, the proof consists in decomposing our original adaptable separated graph $(E,C)$ into a family of non-separated graphs, 
where we can apply the results from \cite{AB},
and then reconstruct $(E,C)$, the monoid $M(E,C)$ and the $K$-algebra $Q_K(E,C)$ in terms of the ones corresponding to the above-mentioned family of non-separated graphs. This is done in such a way that we 
keep control of maintaining the desired isomorphisms between the graph monoids and the $\mathcal V$-monoids of the algebras.

Leaving apart Section \ref{sect:Preliminaries}, which establishes preliminary definitions and results, each of the points below corresponds to a section in the article.

\medskip

(1) In Section \ref{sect:algebras} we define and study the target $K$-algebras, denoted by $Q_K(E,C)$, for any adaptable separated graph $(E,C)$ and any field $K$.    

The algebra $Q_K(E,C)$ is a suitable {\it universal localization} of the algebra $\mathcal S_K(E,C)$ built in \cite{ABPS}. The latter should be understood as an analogue of the 
Leavitt path algebra $L_K(E)$ \cite{AAS}, although we warn the reader that $\mathcal S_K(E,C)$ is isomorphic to neither of the algebras $L_K(E,C)$ nor $L^{{\rm ab}}_K(E,C)$ defined in \cite{AG12} and
\cite{AE} respectively. As in \cite{AB} and \cite{Ara10}, the process of universal localization is required in order to get a von Neumann regular ring. 
We also develop in Section 2 some basic technical tools needed later. 

 \medskip

 After having established all the necessary properties of the algebras $Q_K(E,C)$, we proceed in the next sections to the proof of Theorem \ref{ThmA}. This is done by a method which is reminiscent to the 
 method employed in \cite{Ara10}, although we need to develop a new construction in the present paper. This transforms our original adaptable separated graph $(E,C)$ into a new one $(\tilde{E},\tilde{C})$ with an additional property,
described below.

\medskip

(2) In Section \ref{sect:covermap} we build, for each adaptable separated graph $(E,C)$, another adaptable separated graph $(\tilde E,\tilde C)$ satisfying a condition called condition {\rm (F)}.
This condition requires that each strongly connected component $[v] \in \tilde I=\tilde E^0/{\sim}$ receives edges from at most one strongly connected component $[w]\in \tilde I$ with $[w]\ne [v]$ and, moreover, if $|\tilde C_w|>1$, then 
only one set $X\in \tilde C_w$ emits edges 
ending at the strongly connected component $[v]$. This implies in particular that its associated poset $\tilde I$ is a forest (Lemma \ref{lem:tildeEforest}). 
Moreover, there is a cover map $\phi:(\tilde E,\tilde C)\to (E,C)$ that relates both adaptable separated graphs. Roughly speaking, to build $(\tilde E,\tilde C)$, we copy as many times as needed all the information arising from our original separated graph $(E,C)$ in order to both not loosing information and obtaining just one set of edges that leads to each vertex. The specific development of this machinery is described in Section \ref{sect:covermap}, which ends up with Theorem \ref{thm:covering-theorem}. An easy example of this first step is drawn below (different colours means different sets of edges).
\begin{multicols}{2}
	\begin{tikzpicture}[->,>=stealth',shorten >=1pt,auto,node distance=1.5cm,
	thick,main node/.style={circle,draw,font=\small\bfseries}]
	
	\node (0) {\hspace{2cm}$(E,C)$};
	\node[main node] (1) [below left of=0]{1};
	\node[main node] (2) [below left of=1] {2};
	\node[main node] (3) [below right of=2] {3};
	
	\path
	(1) edge [out=90,in=145,looseness=8, color={blue} ] node {}  (1)
	edge [out=85,in=30,looseness=8, dashed, color={red}] node {}  (1)
	edge [bend right, color={blue}] node {} (2)
	edge [bend left, dashed, color={red}] node {} (2)
	(2) edge [out=90,in=145,looseness=8 ] node {}(2)
	edge [bend right] node {} (3);   
	
	\end{tikzpicture}
	\hspace{3cm}\begin{tikzpicture}[->,>=stealth',shorten >=1pt,auto,node distance=1.5cm,
	thick,main node/.style={circle,draw,font=\small\bfseries}]
	
	\node (0) {\hspace{2cm}$(\tilde E,\tilde C)$};
	\node[main node] (1) [below left of=0]{1};
	\node[main node] (2) [below left of=1] {2'};
	\node[main node] (3) [below right of=1] {2''};
	\node[main node] (4) [below right of=2] {3'};
	\node[main node] (5) [below right of=3] {3''};
	
	\path
	(1) edge [out=90,in=145,looseness=8, color={blue}] node {}  (1)
	edge [out=85,in=30,looseness=8, dashed, color={red} ] node {}  (1)
	edge [bend right, color={blue}] node {} (2)
	edge [bend left, dashed, color={red}] node {} (3)
	(2) edge [out=90,in=145,looseness=8 ] node {}(2)
	edge [bend right] node {} (4)
	(3) edge [out=90,in=45,looseness=8 ] node {}(3)
	edge [bend right] node {} (5);      
	\end{tikzpicture}
\end{multicols}

(3)  Let us now consider  an adaptable separated graph $(\tilde E,\tilde C)$ satisfying condition {\rm (F)}. In this third step, corresponding to Section \ref{sec:BuildingBlocks}, we reconstruct $(\tilde E,\tilde C)$ via successive pullbacks of what we have called building blocks. 
In particular, these building blocks are the connected components of the non-separated graphs obtained by choosing a single set $X\in \tilde C_v$ at each of the vertices $v$ of $\tilde E$
(see Definition \ref{def:buildingblocks-graphs}). 
Notice that the building blocks $E_i$ are non-separated directed graphs; therefore, they satisfy $M(E_i)\cong \mathcal V(Q_K(E_i))$ (\cite{AB}). In our easy separated graph displayed before, the associated 
building blocks are:

\begin{multicols}{4}
	
	\begin{tikzpicture}[->,>=stealth',shorten >=1pt,auto,node distance=1.5cm,
	thick,main node/.style={circle,draw,font=\small\bfseries}]
	
	\node (0) {};
	\node[main node] (1) [below left of=0]{1};
	\node[main node] (2) [below left of=1] {2'};
	\node[main node] (3) [below right of=2] {3'};
	
	\path
	(1) edge [out=90,in=145,looseness=8, color={blue} ] node {}  (1)
	edge [bend right, color={blue}] node {} (2)
	
	(2) edge [out=90,in=145,looseness=8 ] node {}(2)
	edge [bend right] node {} (3);   
	
	\end{tikzpicture}
	
	\begin{tikzpicture}[->,>=stealth',shorten >=1pt,auto,node distance=1.5cm,
	thick,main node/.style={circle,draw,font=\small\bfseries}]
	
	\node (0) {};
	\node[main node] (1) [below left of=0]{2''};
	\node[main node] (2) [below right of=1] {3''};

	\path
	
	(1) edge [out=90,in=145,looseness=8 ] node {}(1)
	edge [bend right] node {} (2);   
	
	\end{tikzpicture}
	
	\begin{tikzpicture}[->,>=stealth',shorten >=1pt,auto,node distance=1.5cm,
	thick,main node/.style={circle,draw,font=\small\bfseries}]
	
	\node (0) {};
	\node[main node] (1) [below left of=0]{1};
	\node[main node] (2) [below right of=1] {2''};
	\node[main node] (3) [below right of=2] {3''};
	
	\path
	(1)	edge [out=85,in=30,looseness=8, dashed, color={red}] node {}  (1)
	edge [bend left, dashed, color={red}] node {} (2)
	(2) edge [out=90,in=45,looseness=8 ] node {}(2)
	edge [bend right] node {} (3);   
	
	\end{tikzpicture}
	
	\begin{tikzpicture}[->,>=stealth',shorten >=1pt,auto,node distance=1.5cm,
	thick,main node/.style={circle,draw,font=\small\bfseries}]
	
	\node (0) {};
	\node[main node] (1) [below right of=0] {2'};
	\node[main node] (2) [below right of=1] {3'};
	
	\path
	(1) edge [out=90,in=45,looseness=8 ] node {}(1)
	edge [bend right] node {} (2);   
	
	\end{tikzpicture}
\end{multicols}

The behaviour of the above-mentioned  pullbacks is analyzed at the different frameworks: monoids, $K$-algebras and $\mathcal V$-functor. 
We finish this section showing in Theorem \ref{thm:main-for-F} our main result 
for the class of adaptable separated graphs satisfying condition {\rm (F)}, i.e. 
$$M(\tilde E,\tilde C)\cong \mathcal V(Q_K(\tilde E,\tilde C)).$$ 
It is worth to mention here that there are two technical difficulties we need to overcome in this step. First, at the level of 
the $K$-algebra for the building blocks, a slight variation of the usual Leavitt path algebra of a directed graph is needed. 
This has been worked out in \cite{A18}, so we only need to refer the results in that paper. Second, in the transition from the algebra setting to the monoid setting, 
we encounter the difficulty that the $\mathcal V$-monoid of a pullback of rings is {\it not} in general the pullback of the corresponding $\mathcal V$-monoids. A necessary and sufficient 
condition for this to hold, involving the $K_1$-groups of algebraic $K$-theory, was established (for a large class of rings) in \cite{Ara10}. We are able to verify this $K_1$-condition in our situation
(see Proposition \ref{prop:conditionforK1}).  

(4) In this final step, we return to the cover map $\phi:(\tilde E,\tilde C)\to (E,C)$ described in (2) in order to move back from the 
auxiliary separated graph $(\tilde E,\tilde C)$ to our original separated graph $(E,C)$. To this end, we use the {\em crowned push-out} construction. We consider diagrams of the form
\[
\begin{tikzpicture}
\matrix (m) [matrix of math nodes,row sep=3em,column sep=3em,minimum width=1em]
{
	I   &I\\
	I' &P\\
};
\path[-stealth]
(m-1-1) edge [->] node  [above] {=} (m-1-2)
edge [->] node [left] {$\varphi$} (m-2-1)

(m-1-2) edge [->] node [right] {$\iota_1$} (m-2-2)

(m-2-1)       edge [->] node  [below] {$\iota_{2}$} (m-2-2)
;
\end{tikzpicture} 
\]
where $I$ and $I'$  are order-ideals in $P$ that are isomorphic via $\varphi$ and satisfy $I\cap I'=\{0\}$. Then, we define the crowned pushout of $(P,I,I',\varphi)$ as the coequalizer of the maps $\iota_1$ and 
$\iota_2\circ\varphi$ (Definition \ref{def:crownedPush}). In Section \ref{sec:Pushouts} we show that this construction is well-behaved at all our settings: monoids, $K$-algebras and $\mathcal V$-functor, and agrees 
with what we expect at the level of adaptable separated graphs. In particular, we build a finite chain of adaptable separated graphs and cover maps 
$$(\tilde E,\tilde C)=(\tilde E_n, \tilde C_n)\overset{\phi_n}{\longrightarrow} (\tilde E_{n-1}, \tilde C_{n-1})\overset{\phi_{n-1}}{\longrightarrow} \ldots \overset{\phi_1}{\longrightarrow} (\tilde E_0, \tilde C_0)=(E,C),$$ 
satisfying that each $M(\tilde E_{k-1},\tilde C_{k-1})$ is the crowned push-out of a quadruple determined by $(\tilde E_{k},\tilde C_k)$ and $\phi_k$, for all $k\in \{1,\ldots, n\}$.
At the algebra level, we use the results in \cite{Ara10} to show in Theorem \ref{thm:main-pushoutthem} that if $M(\tilde E_k,\tilde C_k)\cong\mathcal V(Q_K(\tilde E_k,\tilde C_k))$ for some $k\in \{1,\ldots, n\}$, 
then $M(\tilde E_{k-1},\tilde C_{k-1})\cong\mathcal V(Q_K(\tilde E_{k-1},\tilde C_{k-1}))$, i.e.,  the realization theorem holds inductively along the displayed chain.

Combining Theorem \ref{thm:main-pushoutthem} with step (2) (Theorem \ref{thm:main-for-F}), where it is shown that $M(\tilde E,\tilde C)\cong\mathcal V(Q_K(\tilde E,\tilde C))$, one obtains the 
desired proof of Theorem \ref{ThmA} by induction.

\section{Preliminaries}\label{sect:Preliminaries}

In this section we collect some basic definitions and facts needed to follow the paper.

\subsection{Posets} A \emph{pre-ordered set} is a set $J$ endowed with a reflexive and transitive relation $\le$. If $\le $ is in addition antisymmetric, we say that $(J,\le )$ is a {\it poset}
(partially ordered set). We refer the reader to \cite{FF} for a recent interesting paper on the structure of pre-ordered sets.

Let $(I\le )$ be a poset. A subset $J$ of $I$ is a {\it lower subset} if $x\le y$ and $y\in J$ imply $x\in J$. We denote by $\mathcal L (I)$ the set of all the lower subsets of $I$. 
Note that $\mathcal L(I)$ is a complete distributive lattice, with $\bigwedge$ and $\bigvee$ given by intersection and union respectively.  
For $p\in I$, $ I\downarrow p := \{ x\in I : x\le p \}$ is the lower subset of $I$ generated by $p$.

For an element $p$ of a poset $I$, write
$$\rL(p)=\rL(I , p)=\{q\in I : q<p \text{ and } [q,p]=\{q,p\}\},$$
where $[q,p]= \{x\in I : q\le x\le p \}$ is the interval determined by $q$ and $p$. 
The set $\rL (p)$ is called the {\it lower cover} of $p$. 

We will also need the concepts of tree and forest for a poset, as follows: 

\begin{definition}
 \label{poset-be-a-tree}
{\rm  Let $(I,\le )$ be a poset. We say that $I$ is a {\it tree} in case there is a greatest element $i_0\in I$ and for every
 $i\in I$ the interval $[i,i_0] : = \{ j\in I \mid i\le j\le i_0 \}$ is a chain. The element $i_0$ will be called the {\it root} of the tree $I$.
 A {\it forest} is a disjoint union of trees, that is $I=\bigcup_{\alpha \in \Lambda} I_{\alpha}$ such that each $I_{\alpha}$ is a tree with the induced order, and for each $\alpha\ne \beta$ the elements
 of $I_{\alpha}$ and  $I_{\beta}$ are pairwise incomparable.} \qed
 \end{definition}

\subsection{Commutative monoids}
We will denote by $\N$ the semigroup of positive integers, and by $\Z^+$ the monoid of non-negative integers.
All the monoids appearing in this paper will be commutative and additive. 

A monoid $M$ is {\it conical} if $x+y=0$ implies $x=y=0$ for $x,y\in M$, and $M$ is said to be 
a {\it refinement monoid} if, for all $a, b, c ,d\in M$ such that $a+b=c+d$, there exist $x$, $y$, $z$, $t$ in $M$ such
that $a=x+y$, $b=z+t$,
$c=x+z$ and $d=y+t$. We can represent this situation in the form of a square:

$$\mbox{\begin{tabular}{|l|l|l|}
	\cline{2-3}
	\multicolumn{1}{l|}{} & ${c}$ & ${d}$ \\ \hline
	${a}$ & ${x}$ & ${y}$ \\ \hline
	${b}$ & ${z}$ & ${t}$ \\ \hline
	\end{tabular}}.$$

If $x, y\in M$, we write $x\leq y$
if there exists $z\in M$  such that $x+z = y$.
Note that $\le$ is a translation-invariant pre-order on $M$, called the {\it algebraic pre-order} of $M$. All inequalities in commutative monoids will be with respect to this pre-order.
An element $p$ in a monoid $M$ is a {\it prime element} if $p$ is not invertible in $M$, and, whenever
$p\leq x+y$ for $x,y\in M$, then either $p\leq x$ or $p\leq y$. A monoid $M$ is said to be {\it primely generated} if every non-invertible element of $M$
can be written as a sum of prime elements. By \cite[Theorem 6.8]{Brook}, every finitely generated refinement monoid is primely generated.

A monoid $M$ is said to be {\it separative} in case, whenever
$a,b\in M$ and $a+a=a+b=b+b$, then we have $a=b$. The reader is referred to \cite{AGOP} for 
information on the class of separative monoids and its connections with the non-stable $K$-theory of rings. 
We just remind the following useful characterization of separativity (see \cite[Lemma 2.1]{AGOP}). A monoid $M$ is separative if and only if the following {\it cancellation of small elements} holds:
$$(a+c=b+c \,\, \text{ and }\, \,  c\le na, \,  c\le mb \, \, \text{ for some } \, \, n,m\in \N ) \, \implies \,  a=b \, .$$
 By \cite[Theorem 4.5]{Brook}, every primely generated refinement monoid is separative. In particular every finitely generated refinement monoid is separative.

An element $x\in M$ is {\it regular} if $2x\leq x$. An element
$x\in M$ is {\it free} if $nx\leq mx$ implies $n\leq m$, for $n,m\in \N$. It is straightforward to show, using the above-mentioned characterization of separativity,
that any element 
of a separative monoid is either free or regular. In particular, this holds for every primely generated refinement monoid.

Let $M$ be a monoid. An {\it order-ideal} of~$M$ is a submonoid $I$ of~$M$ satisfying that 
$$\text{ if }x+y\in I,\text{ then } x\in I \text{ and } y\in I \hspace{0.3cm}\forall x,y\in M.$$
If $I$ is an order-ideal of $M$, the equivalence relation
$\equiv_I$ defined on~$M$ by the rule
\[
 x\equiv_Iy\ \Longleftrightarrow\ \exists u,v\in I\text{ such that }x+u=y+v,\quad
 \text{for all }x,y\in M
 \]
is a monoid congruence of~$M$. We put $M/I=M/{\equiv_I}$ and we
 shall say that $M/I$ is an
\emph{ideal quotient} of~$M$. 

It is worth to mention that order-ideals are called {\it divisor-closed submonoids} in some references, see for example \cite[Chapter 1]{FacBook} and \cite{Geroldinger}.

When $M$ is a conical refinement monoid, the set $\mathcal L(M)$
of order-ideals of $M$ forms a complete distributive lattice, with
suprema and infima given by the sum and the intersection of
order-ideals respectively.

Along the sequel we will denote the Grothendieck (or enveloping) group of a commutative semigroup $S$ by $G(S)$. Recall that there is a canonical
semigroup homomorphism $\iota \colon S \to G(S)$, which is injective if and only if $S$ is cancellative.

\subsection{Adaptable Separated Graphs}\label{Section1}
Using the theory of $I$-systems, as developed in \cite{AP16}, the authors have developed in \cite{ABP} a combinatorial model 
for all finitely generated conical refinement monoids. This combinatorial model encompasses indeed a larger class of refinement monoids, which can be studied using the same methods. 
The basic ingredient in this combinatorial description is the theory of separated graphs \cite{AG12}.
(Note that ordinary graphs are not sufficient to describe {\it all} finitely generated refinement monoids, see \cite{AP17, APW08}.)

We will use the notation and conventions from \cite{AAS} and \cite{AG12} concerning graphs and separated graphs respectively. In particular, for a directed graph $E=(E^0,E^1,s,r)$, 
we denote by $E^0$ the set of vertices, by $E^1$ the set of edges, and we use $s(e)$ and $r(e)$ to denote the source and the range of an edge 
$e\in E^1$. Throughout, we will use the symbol $\bigsqcup $ to denote the union of pairwise disjoint subsets of a given set.  

Let us now recall the definition of separated graphs.

\begin{definition}[{\cite[Definitions 2.1 and 4.1]{AG12}}]\label{defsepgraph}
    {\rm A \emph{separated graph} is a pair $(E,C)$ where $E$ is a directed graph and  $C=\bigsqcup
    _{v\in E^ 0} C_v$ is a partition of $E^1$ 
    such that $C_v$ is a partition of $s^{-1}(v)$ (into pairwise disjoint non-empty subsets) for every vertex $v\in E^0$. 
    (If $v$ is a sink, we take $C_v$ to be the empty family of subsets of $s^{-1}(v)$.)

    If all the sets in $C$ are finite, we shall say that $(E,C)$ is a \emph{finitely separated} graph.

    Given a finitely separated graph $(E,C)$, we define the monoid of the separated graph $(E,C)$ to
    be the commutative monoid given by generators and relations as
    follows:
      $$M(E,C)=\Big{\langle} a_v  \, : a_v=\sum _{ e\in X }a_{r(e)} \text{ for every } X\in C_v, v\in E^0\Big{\rangle} .$$ }
\end{definition}
We will make extensive use of the following basic concepts:
\begin{definition}
\label{def:Idefined}
    {\rm Given a directed graph $E=(E^0,E^1, s,r)$:
        \begin{enumerate}
        \item We define a pre-order on $E^0$ (the path-way pre-order) by $v\le w$ if and only if there is a directed path $\gamma $ in $E$ with $s(\gamma ) = w$ and $r(\gamma ) =v$.
                       \item Let $\sim$ be the equivalence relation on the set $E^0$ defined, for every
                       $v,w\in E^0$, by $v\sim w$ if $v\le w$ and $w\le v$. Set $I=E^0/{\sim}$, so that the
                       preorder $\le$ on $E^0$ induces a partial order on $I$. We will also denote by
                       $\le$ this partial order on $I$. Thus, denoting by $[v]$ the class of $v \in E^0$ in $I$, we have $[v] \le [w]$ if and only if $v\le w$. We will often 
                       refer to $[v]$ as the {\em strongly connected component} of $v$.
                       
            \item We say that $E$ is {\em strongly connected} if every two vertices of $E^0$ are connected through a directed path, i.e., if $I$ is a singleton.
            \end{enumerate}
    }
\end{definition}

We now define the main notion used throughout the paper, which was introduced in \cite{ABP,ABPS}. This is the class of adaptable separated graphs.

\begin{definition}
    \label{def:adaptable-sepgraphs}
    {\rm Let $(E,C)$ be a finitely separated graph and let $(I,\le )$ be the partially ordered set associated to the pre-ordered set $(E^0,\leq)$. We say that $(E,C)$ is {\it adaptable} if $I$ is finite,
    and there exist a partition $I=\Ifree \sqcup \Ireg $, and a family of subgraphs $\{ E_p \}_{p\in I}$ of $E$ such that the following conditions are satisfied:
        \begin{enumerate}
            \item $E^0=\bigsqcup_{p\in I} E_p^0$, where $E_p$ is a strongly connected row-finite graph if $p\in \Ireg$ and $E_p^0= \{ v^p \}$ is a single vertex if $p\in \Ifree $.
            \item For $p\in \Ireg$ and $w\in E_p^0$, we have that $|C_w|= 1$ and $|s_{E_p}^{-1} (w)|\ge 2$. Moreover, all edges departing from $w$ either belong to the graph $E_p$ or connect $w$ to a vertex $u\in E_q^0$,
            with $q<p$ in $I$.
            \item For $p\in \Ifree$, we have that $s^{-1}(v^p) = \emptyset $ if and only if $p$ is minimal in $I$. If $p$ is not minimal, then there is a positive integer $k(p)$ such that
            $C_{v^p}=\{ X^{(p)}_1,\dots ,X^{(p)}_{k(p)} \}$. Moreover, each $X^{(p)}_i$ is of the form
            $$X^{(p)}_i = \{ \alpha (p,i) ,\beta (p,i,1),\beta (p,i,2),\dots , \beta(p, i, g(p,i)) \} ,$$
            for some $g(p,i) \ge 1$, where $\alpha (p,i)$ is a loop, i.e., $s(\alpha(p,i))= r(\alpha (p,i)) = v^p$, and $r(\beta (p,i,t))\in E^0_q$ for $q<p$ in $I$.
            Finally, we have $E_p^1= \{ \alpha(p,1),\dots ,\alpha (p,k(p)) \}$.
        \end{enumerate}

        The edges connecting a vertex $v\in E_p^0$ to a vertex $w\in E_q^0$ with $q<p$ in $I$ will be called {\it connectors}.} \qed
\end{definition}

Following the work in \cite{AP16,AP17}, we have established in \cite{ABP} the following fundamental result, which links adaptable separated graphs and refinement monoids.  

\begin{theorem}\cite{ABP}
\label{thm:maingraphs-monoids}
    The following two statements hold:
    \begin{enumerate}
        \item If $(E,C)$  is an adaptable separated graph, then $M(E,C)$ is a primely generated conical refinement monoid.
        \item For any finitely generated conical refinement monoid $M$, there exists an adaptable separated graph $(E,C)$ such that $M\cong M(E,C)$. 
    \end{enumerate}
\end{theorem}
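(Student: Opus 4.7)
The plan is to prove the two parts separately, exploiting the hierarchical structure on $I$ in both directions.

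For part (1), I would argue by induction on $|I|$, processing the primes of $I$ from the bottom up. Conicality of $M(E,C)$ is immediate from the augmentation $a_v \mapsto 1$, which extends to a well-defined homomorphism $M(E,C) \to \Z^+$ because every defining relation $a_v = \sum_{e \in X} a_{r(e)}$ has at least two summands on the right-hand side (using condition (2) for $p \in \Ireg$ and the form of $X^{(p)}_i$ for $p \in \Ifree$). For refinement and prime generation, I would fix a minimal element $p \in I$, let $J = I \setminus \{p\}$, and let $(E_J, C_J)$ be the adaptable separated graph obtained by removing the vertices of $E_p$ (and the edges emitted by them). The monoid $M(E_J, C_J)$ is an order-ideal of $M(E,C)$, and by induction is a primely generated refinement monoid. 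One then shows that adjoining the generators $\{a_v : v \in E_p^0\}$ with their defining relations produces a refinement monoid: when $p \in \Ireg$, the extension is locally governed by the graph monoid of the strongly connected row-finite graph $E_p$ (so the refinement result of \cite{AMFP} applies after restriction), while for $p \in \Ifree$ one uses the loop $\alpha(p,i)$ in each $X^{(p)}_i$ to show that $a_{v^p}$ is a regular or free prime in the extension and that any refinement square can be resolved. The specific form of the $X^{(p)}_i$ with a loop plus connectors is tailor-made so that refinement survives.

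For part (2), I would invoke the theory of $I$-systems developed in \cite{AP16}. Let $M$ be a finitely generated conical refinement monoid. By \cite[Theorem 6.8]{Brook} (cited in the preliminaries) and separativity, $M$ is primely generated and every element is either free or regular. Let $\mathcal{P}$ be the set of primes, and set $p \sim q$ iff $p \le q$ and $q \le p$; then $I := \mathcal{P}/{\sim}$ is a finite poset. Split $I = \Ifree \sqcup \Ireg$ according to whether the class consists of free or regular primes (which is well-defined by separativity). For each $p \in \Ireg$ I would realize the equivalence class via a strongly connected row-finite graph $E_p$ whose vertex monoid encodes the internal linear relations among the regular primes in class $p$ (each vertex having $\ge 2$ outgoing edges, as required by adaptability). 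For each $p \in \Ifree$ I would take a single vertex $v^p$; the $k(p)$ distinct refinement patterns of $a_{v^p}$ (relative to the order-ideal $M \downarrow {<}p$) dictate $k(p)$ edge-sets $X^{(p)}_i$, with a loop $\alpha(p,i)$ accounting for the prime's own contribution and connectors $\beta(p,i,t)$ pointing to strictly lower primes. Finally, one verifies that the resulting $(E,C)$ is adaptable and that $M(E,C) \cong M$ via the map $a_v \mapsto $ (the corresponding prime or vertex-monoid element).

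The main obstacle is the careful combinatorial packaging in part (2): one has to show that the refinement data of $M$ (i.e., all the ways prime elements decompose within lower order-ideals) can be encoded \emph{exactly} by the sets $X^{(p)}_i$ of an adaptable separated graph, with no lost relations and no spurious ones. This is precisely the content of the $I$-systems machinery of \cite{AP16,AP17}, and the adaptability conditions on $(E,C)$ (in particular the loop $\alpha(p,i)$ in every $X^{(p)}_i$ for free primes, and the strongly-connected-plus-at-least-two-edges condition for regular components) are calibrated so that the realization is faithful. Part (1) is comparatively cleaner, with the real work being the refinement step in the inductive extension across each $p \in I$.
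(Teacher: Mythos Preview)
This theorem is not proved in the present paper; it is quoted from \cite{ABP}, so there is no in-paper proof to compare against directly. That said, your sketch of part~(1) contains two concrete errors, and your approach to refinement differs from what \cite{ABP} actually does.

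First, your conicality argument is backwards. The assignment $a_v \mapsto 1$ does \emph{not} extend to a monoid homomorphism $M(E,C)\to\Z^+$ precisely because each $X$ has at least two elements: the relation $a_v=\sum_{e\in X}a_{r(e)}$ would force $1=|X|\ge 2$ in $\Z^+$. Conicality is still easy---map every $a_v$ to the nonzero element of the two-element monoid $\{0,\infty\}$, or argue directly with the congruence on the free commutative monoid---but your stated justification is the opposite of what would make the map well-defined.

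Second, your induction is mis-oriented. If $p$ is minimal in $I$ then $E_p^0$ is a hereditary (and $C$-saturated) subset of $E^0$, so removing it produces the \emph{quotient} separated graph $(E/E_p^0,C/E_p^0)$, and $M(E_J,C^J)\cong M(E,C)/M(E_p^0)$ is a quotient monoid, not an order-ideal as you claim. To get an order-ideal you would remove a \emph{maximal} element of $I$ (so that the remaining vertices form a hereditary set, and Lemma~\ref{lem:restriction-monoids} applies); alternatively you could keep the bottom-up direction but work with the extension $M(E_p^0)\hookrightarrow M(E,C)\twoheadrightarrow M(E_J,C^J)$ and prove refinement for the middle term from refinement of the outer two---but refinement is not in general preserved under extensions, so this step needs genuine work you have not supplied. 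The route actually taken in \cite{ABP} is neither induction: refinement is proved directly via a confluence lemma on the free commutative monoid (\cite[Lemma~2.4]{ABP}, invoked in this paper in the proofs of Lemma~\ref{lem:restriction-monoids} and Theorem~\ref{thm:monoid-limit}), generalizing the argument of \cite[Proposition~4.4]{AMFP} for ordinary graph monoids.

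Your outline of part~(2) is on the right track: the passage from a finitely generated conical refinement monoid to an $I$-system and then to an adaptable separated graph is exactly the content of \cite{AP16,AP17} as assembled in \cite{ABP}, and you have correctly identified where the real difficulty lies.
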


In particular, it is shown in \cite{ABP} that, for an adaptable separated graph $(E,C)$, all the
elements $a_v$, for $v\in E^0$, are prime elements of the monoid $M(E,C)$, and that $a_v$ is free
(respectively, regular) in $M(E,C)$ if and only if $[v]\in \Ifree$ (respectively, $[v]\in \Ireg$).
We often refer to the elements of $\Ifree$ as {\it free primes} and to the elements of $\Ireg$ as
{\it regular primes}.

Recall that a subset $H$ of vertices of a directed graph $E$ is said to be {\it hereditary} if $v\le w$ and $w\in H$ imply $v\in H$. Note that hereditary subsets of $E^0$ correspond to lower subsets of $I=E^0/{\sim}$.
If $(E,C)$ is a separated graph and $H$ is a hereditary subset of $E^0$, we denote by $(E_H,C^H)$ the restriction of $(E,C)$ to $H$. We thus have $(E_H)^0=H$ and $C^H_v= C_v$ for $v\in H$.    
The following lemma will be used through the article without an explicit mention.

\begin{lemma}
 \label{lem:restriction-monoids}
Let $(E,C)$ be an adaptable separated graph and let $H$ be a hereditary subset of $E^0$. Then the order-ideal $M(H)$ of $M(E,C)$ generated by
$H$ is isomorphic to the monoid $M(E_H, C^H)$ of the separated graph $(E_H,C^H)$.
 \end{lemma}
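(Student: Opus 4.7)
The plan is to define the natural monoid homomorphism $\phi \colon M(E_H,C^H) \to M(E,C)$ by $a_v \mapsto a_v$ for $v \in H$. This is well-defined because the hereditariness of $H$ ensures that for each $v \in H$ and $X \in C_v = C_v^H$ one has $r(e) \in H$ for all $e \in X$, so every defining relation of $M(E_H, C^H)$ is already a defining relation of $M(E,C)$. Trivially $\mathrm{Im}(\phi) \subseteq M(H)$, since each generator $a_v$ of $M(E_H,C^H)$ maps to a generator of $M(H)$.

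To show $\mathrm{Im}(\phi) = M(H)$, I would take $x \in M(H)$, so $x + y = a_{v_1} + \cdots + a_{v_n}$ for some $v_1,\dots,v_n \in H$ and some $y \in M(E,C)$. By the Riesz refinement property of $M(E,C)$ guaranteed by Theorem \ref{thm:maingraphs-monoids}, I can write $x = x_1 + \cdots + x_n$ with $x_i \le a_{v_i}$, reducing to the case $x \le a_v$ with $v \in H$. Then, using any defining relation $a_v = \sum_{e \in X} a_{r(e)}$ (for $X \in C_v$) and iterating refinement—each $r(e)$ is again in $H$ by hereditariness—an induction on the position of $[v]$ in the finite poset $I$ rewrites $x$ as a finite sum of $a_w$'s with $w \in H$, placing $x$ in $\mathrm{Im}(\phi)$.

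For injectivity, I would first observe that $(E_H, C^H)$ is itself adaptable: hereditariness forces $I_H = H/{\sim}$ to be a lower subset of $I$, and the partition $\Ifree \sqcup \Ireg$, the family of subgraphs $\{E_p\}$, and the combinatorial data attached to each free prime restrict consistently to $H$. Consequently $M(E_H, C^H)$ is itself a primely generated conical refinement monoid by Theorem \ref{thm:maingraphs-monoids}. Injectivity then reduces to showing that any chain of elementary rewrites in the free commutative monoid $F = \N E^0$ connecting two elements of $F_H := \N H$ can be replaced by a chain inside $F_H$ using only relations of $(E_H, C^H)$. Since forward applications of relations at $H$-vertices preserve $F_H$ by hereditariness, the essential step is to handle those backward moves that temporarily leave $F_H$.

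The main obstacle is this last confluence step, because the rewriting system on a separated graph is genuinely non-deterministic (there are multiple $X \in C_v$ at each vertex) and backward moves can introduce vertices $v \notin H$ some of whose ranges lie in $H$. Rerouting such chains entirely within $F_H$ requires careful exploitation of the adaptable structure—the finiteness of $I$, the row-finite strongly connected structure of each $E_p$ for $p \in \Ireg$, and the explicit description of $C_{v^p}$ for $p \in \Ifree$—to produce, at each detour outside $F_H$, a substitute sequence of moves using only $(E_H, C^H)$-relations.
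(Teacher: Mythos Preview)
Your setup is correct and your diagnosis of the injectivity issue is accurate, but the proof has a genuine gap: you identify the obstacle (backward rewrites leaving $F_H$) without resolving it, and your surjectivity induction does not terminate as written.

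The missing ingredient is the \emph{confluence property} for adaptable separated graphs, stated in \cite[Lemma~2.4]{ABP}: whenever $\alpha\sim\beta$ in the free commutative monoid $F=\N E^0$, there exists $\gamma\in F$ with $\alpha\to\gamma$ and $\beta\to\gamma$ using only \emph{forward} moves. This is precisely what the paper invokes. With confluence in hand, injectivity is immediate: if $\alpha,\beta\in F_H$ represent the same element of $M(E,C)$, then $\alpha\to\gamma\leftarrow\beta$; since forward moves at vertices of $H$ stay inside $F_H$ by hereditariness, the entire witness lies in $F_H$, giving $[\alpha]=[\beta]$ in $M(E_H,C^H)$. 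No rerouting of backward detours is needed at all---confluence replaces the arbitrary $\sim$-chain by two forward chains.

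Your surjectivity argument also breaks: the ``induction on the position of $[v]$ in $I$'' is not well-founded, because applying a defining relation $a_v=\sum_{e\in X}a_{r(e)}$ does not in general lower the level in $I$. At a free prime $p$ every $X\in C_{v^p}$ contains the loop $\alpha(p,i)$ with $r(\alpha(p,i))=v^p$, and at a regular prime the graph $E_p$ is strongly connected, so $[r(e)]=[v]$ is unavoidable. Confluence again fixes this: if $x+y=[\mu]$ with $\mu\in F_H$, write $x=[\alpha]$, $y=[\beta]$; confluence gives $\alpha+\beta\to\gamma\leftarrow\mu$, and since $\mu\in F_H$ we get $\gamma\in F_H$. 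Forward moves act on one vertex at a time, so $\alpha+\beta\to\gamma$ splits as $\alpha\to\alpha'$, $\beta\to\beta'$ with $\alpha'+\beta'=\gamma$, whence $\alpha'\in F_H$ and $x=[\alpha']\in\mathrm{Im}(\phi)$.

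In short, your outline is on the right track, but the decisive step---confluence---is exactly what the paper's one-line proof cites (together with \cite[Lemma~2.18]{AP17}), and it dissolves both difficulties you flagged rather than requiring the case-by-case ``careful exploitation of the adaptable structure'' you anticipate.
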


 \begin{proof}
 This follows exactly as in \cite[Lemma 2.18]{AP17}, due to the validity of the confluence property for the monoids of adaptable 
 separated graphs (\cite[Lemma 2.4]{ABP}).   
 \end{proof}

\subsection{Rings and algebras}\label{subsec:RingsandAlgebras}
A ring $R$ is called {\it von Neumann regular} if for every
$x\in R$ there is $y\in R$ such that $x=xyx$. We refer the reader to \cite{vnrr} for the general theory of von Neumann regular rings.
The rings appearing in this paper will not be unital in general, but they have {\it local units}, that is, there is a set of idempotents $\mathcal E$ in 
$R$, which is directed with respect to the order $e\le f \iff  e=ef=fe$, such that $R=\bigcup_{e\in \mathcal E} eRe$. By \cite[Example 1]{AM87}, any von Neumann regular ring 
is a ring with local units. 

For a ring $R$, let $M_{\infty}(R)$ be the directed union of
$M_n(R)$ ($n\in\mathbb N$), where the transition maps $M_n(R)\to
M_{n+1}(R)$ are given by $x\mapsto \left( \smallmatrix x&0\\
0&0\endsmallmatrix \right)$. Two idempotents $e,f\in M_{\infty}(R)$
are {\it equivalent} in case there are $x\in eM_{\infty}(R)f$ and
$y\in fM_{\infty}(R)e$ such that $xy=e$ and $yx=f$. We define
$\mathcal V (R)$ to be the monoid of equivalence classes $[e]$ of
idempotents $e$ in $M_\infty(R)$ with the operation
$$[e]+ [f] := [\bigl(   \smallmatrix e&0\\ 0&f
\endsmallmatrix  \bigr)]$$
for idempotents $e,f\in M_\infty(R)$. 
For unital $R$, the monoid $\mathcal V (R) $ is
the monoid of isomorphism classes of finitely generated projective
right $R$-modules, where the operation is induced by direct sum. 
We will occasionally use the expression ``$\mathcal V $-monoid of a ring'' to refer to the above construction.
It is straightforward to extend the above definition to a functor $\mathcal V$ from the category of rings to the category of commutative monoids. 

If $I$ is an ideal of a unital ring $R$, then $\mathcal V (I)$ can be
identified with the monoid of isomorphism classes of finitely
generated projective right $R$-modules $P$ such that $P=PI$ (see \cite[page 296]{MM}).

If $R$ is a ring with local units, then the $K$-theory group $K_0(R)$ can be computed as the Grothendieck group of the monoid $\mathcal V(R)$, that is, 
$K_0(R)= G(\mathcal V (R))$ (see \cite[Proposition 0.1]{MM}). Recall that 
a ring $R$ is said to be {\it separative} if its monoid $\mathcal V (R)$ is a separative monoid.

When $R$ is a von Neumann regular ring, the monoid $\mathcal V (R)$ contains a lot of information about the structure of $R$. 
In the next proposition, we collect various results on this connection, that we will need later.

\begin{proposition}
 \label{prop:basicpropsof-VR}
 Let $R$ be a von Neumann regular ring. Then the following hold:
 \begin{enumerate}
  \item $\mathcal V (R)$ is a refinement monoid.
  \item The lattice $\mathcal L (R)$ of (two-sided) ideals of $R$ is a complete distributive lattice and there is a lattice isomorphism
  $\mathcal L (R) \cong \mathcal L (\mathcal V (R))$
  sending $I\in \mathcal L (R)$ to $\mathcal V (I)\in \mathcal L (\mathcal V (R))$.
  \item If $I\in \mathcal L(R)$ then there is a natural monoid isomorphism $\mathcal V (R)/\mathcal V (I) \cong \mathcal V (R/I)$.
  \end{enumerate}
\end{proposition}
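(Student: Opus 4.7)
The plan is to establish each of the three assertions by reducing them to standard facts about von Neumann regular rings, most of which are already in \cite{vnrr} (unital case) and can be extended to the local-unital setting used here via \cite{AM87} and \cite{MM}.

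For part (1), I would argue that refinement in $\mathcal V(R)$ follows from refinement at the level of idempotent matrices. Given an equality $[e_1]+[e_2]=[f_1]+[f_2]$ in $\mathcal V(R)$, one represents both sides by orthogonal pairs of idempotents in some $M_n(R)$ whose diagonal sums are equivalent; applying the standard Riesz refinement for finitely generated projective modules over a regular ring (\cite[Theorem 2.8]{vnrr}, adapted to the local-unital case by passing to $eRe$ for a large enough idempotent $e$ with local unit for all idempotents in sight) produces the required four refining idempotents.

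For part (2), the essential input is that, in a regular ring, every two-sided ideal $I$ is generated by the idempotents it contains (if $x\in I$ and $x=xyx$, then $xy\in I$ is an idempotent with $xR=xyR$). This yields mutually inverse maps: $I\mapsto\mathcal V(I)$, and, in the other direction, an order-ideal $S\le \mathcal V(R)$ is sent to the two-sided ideal generated by all idempotents $e\in M_\infty(R)$ with $[e]\in S$. One checks that these maps are order-preserving and mutually inverse; this is the local-unital version of \cite[Proposition 7.3]{GW} (see also \cite[Corollary 2.23]{vnrr}). Distributivity of $\mathcal L(R)$ is then transferred from the known distributivity of the lattice of order-ideals of a refinement monoid.

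For part (3), I would use the exact sequence $0\to I\to R\to R/I\to 0$ and the associated map $\mathcal V(R)\to \mathcal V(R/I)$. Surjectivity relies on lifting idempotents from $R/I$ to $R$: given $\bar e\in R/I$ with $\bar e=\bar e^2$, a preimage $x\in R$ satisfies $x-x^2\in I$, and regularity of $R$ (together with local units, via \cite[Example 1]{AM87}) permits the standard idempotent-lifting argument inside a corner $fRf$. The identification of the monoid-theoretic kernel with $\mathcal V(I)$ then amounts to showing that $[e]\equiv_{\mathcal V(I)}[f]$ in $\mathcal V(R)$ if and only if $[e]$ and $[f]$ have the same image in $\mathcal V(R/I)$; this is \cite[Proposition 1.4]{MM} or an immediate adaptation thereof.

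The main technical obstacle is the non-unital aspect of (3): ensuring that idempotent lifting, the description of $\mathcal V(I)$ as classes of projective modules $P$ with $P=PI$, and the monoid-quotient description all behave correctly without a global identity. All three points are handled by working inside corners $eRe$ cut out by an idempotent $e$ serving as a local unit for the finitely many idempotents involved in any particular computation, reducing the statement to the unital case already settled in \cite{vnrr} and \cite{MM}.
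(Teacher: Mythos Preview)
Your proposal is correct and follows the same approach as the paper: both treat this proposition as a collection of known facts to be cited rather than proved from scratch. The paper's own proof is even terser than yours---it simply refers to \cite[Corollary 1.3, Proposition 1.4]{AGOP} for (1) and (3) (in the broader setting of exchange rings, which subsumes the regular case and handles the non-unital situation directly) and to \cite[Proposition 7.3]{GW} for (2), whereas you cite \cite{vnrr} and \cite{MM} and sketch the reduction to corners $eRe$; the content is the same, only the bookkeeping of references differs.
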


\begin{proof}
 (1) and (3) were proved in \cite[Corollary 1.3, Proposition 1.4]{AGOP} for the larger class of exchange rings. The proof of (2) is contained in \cite[Proposition 7.3]{GW}. 
\end{proof}


\section{The algebras}
\label{sect:algebras}

\subsection{Preliminaries on universal localization and rational series}
\label{subsect:prelms-on-univ-local}

In this subsection we introduce the tools from the theory of universal localization and rational series that we need for our main construction. 
We refer the reader to \cite{free} and  \cite{scho} for the general theory of universal localization.

We will need the following particular instance of universal localization.  Given a family of idempotents  $\{ e_i \}_{i\in I}$ of a possibly non-unital $K$-algebra $R$ and sets of square matrices
$\Sigma_i$ over $e_iRe_i$, for $i\in I$, we consider the $K$-algebra unitization $R^1$ of $R$, and 
the algebra $R^1(\bigcup _{i\in I} \Upsilon_i)^{-1}$, where $\Upsilon_i$ is the set of all  
the matrices $(1-e_i)I_n +A\in M_n(R^1)$, where $A$ in an 
$n\times n$ matrix in $\Sigma_i$ and $I_n$ is the $n\times n$ identity matrix. Then we define $R(\bigcup_{i\in I}\Sigma_i)^{-1}$ as the ideal of $R^1(\bigcup _{i\in I} \Upsilon_i)^{-1}$ 
generated by $R$.  
Note that this corresponds to universally inverting each $A\in M_n(e_iRe_i)$ over $e_iRe_i$, that is, the canonical map 
$\iota  \colon R \to R(\bigcup_{i\in I}\Sigma_i)^{-1}$ satisfies the following universal property:

\medskip

{\it Given any $K$-algebra $T$ and any $K$-algebra homomorphism $\varphi \colon R\to T$ such that $\varphi (A)$ is invertible over $M_n(\varphi (e_i) T\varphi (e_i))$ for any $A\in \Sigma_i \cap M_n(R)$, 
there exists a unique $K$-algebra homomorphism $\widetilde{\varphi}\colon R(\bigcup_{i\in I}\Sigma_i)^{-1}\to T$ such that $\widetilde{\varphi}\circ \iota = \varphi$. }

\medskip

We now recall and extend some constructions from \cite{AB}. The notation we follow here is slightly  different from the one in \cite{AB}, but it agrees with the notation followed 
in \cite{A18}. 

Let $K$ be a field an let $E$ be a row-finite graph. We denote by $P_K(E)$ the usual path algebra over $E$ with coefficients in $K$.
Denote by $\mathcal F$ the directed family of all the finite complete subgraphs of $E$ (see \cite[Definition 1.6.7]{AAS}). 
Note that $P_K(E) = \varinjlim_{F\in \mathcal F} P_K(F)$. 
We define, for a finite graph $F$, $P_K((F))$ as the $K$-algebra of power series on $F$ (see \cite{AB}) and then we define  
$$P_K((E))= \varinjlim_{F\in \mathcal F} P_K((F)).$$
If $F$ is a finite graph, the algebra of rational series $P_K^{{\rm rat}}(F)$ is the division closure of $P_K(F)$ in $P_K((F))$, see \cite{AB}.
We define $P_K^{{\rm rat}}(E):= \varinjlim _{F\in \mathcal F} P_K^{{\rm rat}}(F)$. Using \cite[Theorem 1.20]{AB}, it is easy to see that $P_K^{{\rm rat}}(E)$ is the universal localization
$P_K(E) \Sigma^{-1}$, where $\Sigma =\bigcup_{F\in \mathcal F} \Sigma _F$, and $\Sigma _F$ is the set of square matrices $A$ over $P_K(F)$ such that $\epsilon_F(A)$ is invertible, where 
$\epsilon_F \colon P_K(F) \to \bigoplus_{v\in F^0} vK$ is the augmentation map. Note that $\Sigma_F$ is a set of square matrices over the corner algebra $v_FP_K(E)v_F$, where $v_F:= \sum _{v\in F^0} v$,  
of the possibly non-unital algebra $P_K(E)$.

Following \cite{AB}, we define, for $e\in E^1$, the right transduction
$\tilde{\delta}_e\colon P_K((E))\to P_K((E))$ corresponding to $e\in E^1$ by $$\tilde{\delta_e} \Big( \sum_{\alpha\in
\mathrm{Path}(E)}\lambda_\alpha \alpha \Big) =\sum_{\substack{\alpha\in
\mathrm{Path}(E)\\s(\alpha)=r(e)}} \lambda_{e\alpha } \alpha.$$

Note that, by the argument given after the proof of Proposition 2.7 in \cite{AB}, the algebras
$P^{\rm rat}_K(E)$ are stable under all the right transductions $\tilde{\delta}_e$. 

It will be convenient for our purposes to slightly modify the definition of the maps $\tau _e$ given in \cite[page 220]{AB}. (Again our definition here is the same as the one presented in \cite{A18}.) 

For each $e\in E^1$, define the map $\tau _e$ as the endomorphism of $P_{K}((E))$
given by the composition
$$P_{K}((E))\to \bigoplus_{v\in E^0} Kv\to \bigoplus_{v\in E^0} Kv\to P_{K}((E)),$$
where the first map is the augmentation homomorphism, the third map is the canonical inclusion, and the middle map is the $K$-lineal map
given by sending $s(e)$ to $r(e)$, and any other idempotent
$v$ with $v\ne s(e)$ to $0$.
The reader can check that the 
right transduction $\tilde{\delta}_e\colon
P_{K}((E))\to P_{K}((E))$ corresponding to $e\in E^1$ is a right
$\tau_e$-derivation, that is,
\begin{equation}
\label{right-der} \tilde{\delta}_e
(rs)=\tilde{\delta}_e(r)s+\tau_e(r)\tilde{\delta}_e(s)
\end{equation}
for all $r,s\in P_{K}((E))$ (see \cite[Lemma 2.4]{AB}).

We now review the main construction in 
\cite[Section 2]{AB}.  

\begin{proposition} \cite[Proposition 2.5]{AB}
 \label{prop:algebraSinAB}
  Let $E$ be a row-finite graph, let $E^*$ be the opposite graph,  and let 
$R$ be a subalgebra of $P_K((E))$ closed under all the right transductions $\tilde{\delta}_e$, $e\in E^1$. Then there exists a ring $S$ such that:
\begin{enumerate}[(i)]
 \item There are embeddings 
 $$L\colon R\to S,\,\, r\mapsto  L_r,\qquad  z\colon P(E^*)\to S, \,\, w^*\mapsto z_{w^*},$$
 such that $z_v=L_v$ for all $v\in E^0$, and
 $$z_{e^*}L_r = L_{\tau _e(r)}z_{e^*} + L_{\tilde{\delta}_e (r)}$$
 for all $e\in E^1$ and all $r\in R$. 
 \item $S$ is projective as a left $R$-module. Indeed, $S= \bigoplus_{\gamma \in \mathrm{Path} (E)} S_{\gamma}$ with $S_{\gamma} \cong Rr(\gamma)$ as $R$-modules.
 Moreover, every element of $S$ can be uniquely written as a finite sum $\sum_{\gamma\in \mathrm{Path}(E)} L_{a_{\gamma}}z_{\gamma^*}$, where $a_{\gamma} \in Rr(\gamma)$ for all
 $\gamma \in \mathrm{Path}(E)$. 
 \end{enumerate}
\end{proposition}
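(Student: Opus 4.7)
The plan is to construct $S$ by generators and relations, and then certify its structure by exhibiting a faithful representation on an explicit module. Let $S$ be the $K$-algebra generated by symbols $\{L_r : r \in R\}$ and $\{z_{e^*} : e \in E^1\}$, subject to: (a) $L$ is a $K$-algebra homomorphism from $R$; (b) $z_v = L_v$ for $v \in E^0$; (c) the $z_{e^*}$'s multiply according to the opposite path algebra $P(E^*)$; and (d) the commutation relation $z_{e^*} L_r = L_{\tau_e(r)} z_{e^*} + L_{\tilde\delta_e(r)}$. Iterating (d) pushes every $L_r$ in a monomial to the far left and then contracts the resulting $z$-suffix via (c), so every element of $S$ admits a \emph{normal form} expression $\sum_{\gamma \in \mathrm{Path}(E)} L_{a_\gamma} z_{\gamma^*}$ with $a_\gamma \in Rr(\gamma)$ and finite support. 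This shows that the $R$-module map
\[
\Phi \colon \bigoplus_{\gamma \in \mathrm{Path}(E)} Rr(\gamma) \longrightarrow S, \qquad (a_\gamma) \longmapsto \sum_\gamma L_{a_\gamma} z_{\gamma^*},
\]
is surjective; the content of the proposition is its injectivity.

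To prove injectivity, I would put a left $S$-module structure on $V := \bigoplus_\gamma Rr(\gamma)$ itself. Writing $a\,\mathbf{z}_\gamma$ for the generator corresponding to $a \in Rr(\gamma)$, I define
\[
L_r \cdot (a\,\mathbf{z}_\gamma) := (ra)\,\mathbf{z}_\gamma, \qquad z_{e^*} \cdot (a\,\mathbf{z}_\gamma) := \tau_e(a)\,\mathbf{z}_{e^*\gamma} + \tilde\delta_e(a)\,\mathbf{z}_\gamma,
\]
where $\mathbf{z}_{e^*\gamma} = 0$ unless $e^*\gamma$ is a composable path in $E^*$. One then checks that these operators satisfy the defining relations (a)--(d); the only non-routine item is (d), which, after applying both sides to $a\,\mathbf{z}_\gamma$ and grouping terms by $z$-suffix, collapses exactly to the identity $\tilde\delta_e(ra) = \tilde\delta_e(r)\,a + \tau_e(r)\,\tilde\delta_e(a)$ from equation \eqref{right-der}. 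Thus the representation is well-defined. Now, evaluating a purported relation $\sum_\gamma L_{a_\gamma} z_{\gamma^*} = 0$ on $r(\gamma_0)\,\mathbf{z}_{\gamma_0} \in V$ annihilates all summands except $\gamma = \gamma_0$ and returns $a_{\gamma_0}\,\mathbf{z}_{\gamma_0}$, forcing $a_{\gamma_0} = 0$. Hence $\Phi$ is injective.

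The remaining assertions are cosmetic. The decomposition $S = \bigoplus_\gamma S_\gamma$ with $S_\gamma \cong Rr(\gamma)$ is immediate from the bijectivity of $\Phi$; injectivity of $L$ and $z$ follows because they embed into disjoint basis components; and the commutation relation is built into the definition. For a row-finite but infinite $E$, one applies the construction to each finite complete subgraph $F \subseteq E$ and takes the directed limit, since all constructions respect the inclusions and the normal-form bases are compatible. The main obstacle is the well-definedness of the representation on $V$, and specifically the compatibility of (d) with the two partial actions, which — as noted — reduces entirely to the $\tau_e$-derivation identity \eqref{right-der} that is the structural hypothesis underlying the whole setup.
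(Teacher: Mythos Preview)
Your approach is sound in outline but differs from the paper's, and your injectivity step contains an error.

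The paper does not define $S$ by generators and relations at all: it sets $T=\mathrm{End}_K(R)$, lets $L_r\in T$ be left multiplication by $r$, lets $z_{e^*}\in T$ be the operator $r\mapsto \tilde\delta_e(r)$, and takes $S$ to be the subalgebra of $T$ generated by these. The commutation relation in (i) is then an identity in $\mathrm{End}_K(R)$ coming directly from \eqref{right-der}, and the injectivity of $L$ is automatic. The normal form (ii) is then argued (as in \cite{AB}) by evaluating operators on elements of $R$ itself. Your route --- defining $S$ abstractly and then building a faithful module $V=\bigoplus_\gamma Rr(\gamma)$ to separate normal forms --- is a legitimate PBW-style alternative; it is slightly longer, since you must check well-definedness of the $V$-action (which you do correctly via \eqref{right-der}), but it has the advantage of being self-contained.

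The gap is in your last paragraph. Acting by $z_{\gamma^*}$ on $V$ \emph{lengthens} the path index, not shortens it: with your own formulas one computes $z_{\gamma^*}\cdot(r(\gamma_0)\,\mathbf z_{\gamma_0}) = r(\gamma)\,\mathbf z_{\gamma_0\gamma}$ whenever $s(\gamma)=r(\gamma_0)$, which is nonzero for infinitely many $\gamma$ and in particular does \emph{not} return $a_{\gamma_0}\mathbf z_{\gamma_0}$ for $\gamma=\gamma_0$ (unless $\gamma_0$ happens to be a closed path, and even then it lands in the wrong component). The correct evaluation is on the trivial-path vectors: for each vertex $v$, applying $\sum_\gamma L_{a_\gamma} z_{\gamma^*}$ to $v\,\mathbf z_v$ yields $\sum_{s(\gamma)=v} a_\gamma\,\mathbf z_\gamma$, and since the $\mathbf z_\gamma$ are independent in $V$ this forces every $a_\gamma=0$. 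With this fix your argument goes through. (The remark about passing to a direct limit over finite complete subgraphs is harmless but unnecessary: the construction works verbatim for any row-finite $E$.)
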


\begin{proof} Set $T=\mathrm{End}_K(R)$. For $r\in R$ let us denote by $L_r$ the element of $T$ given by left multiplication by $r$. The map $L\colon R\to T$ is clearly an injective algebra homomorphism.
For each $e\in E^1$ consider the elements $z_{e^*}$ of $T$ defined by
$$z_{e^*} (r) =\tilde{\delta}_e(r).$$
Let $S$ be the subalgebra of $T$ generated by $L(R)$ and all the elements $z_{e^*}$ defined above. It is proved as in \cite[Proposition 2.5]{AB} that
$S$ satisfies the stated properties. 
\end{proof}

The algebra $S$ defined above will be denoted by $R\langle E^*; \tau, \tilde{\delta} \rangle $. Note that both $R$ and $P_K(E^*)$ embed into $S$. Identifying the elements
of $R$ with their images in $R\langle E^*; \tau, \tilde{\delta} \rangle $, we see from Proposition \ref{prop:algebraSinAB} that every element in $R\langle E^*; \tau, \tilde{\delta} \rangle $
can be uniquely written as a finite sum $\sum_{\gamma \in \mathrm{Path}(E)} a_{\gamma}\gamma^*$, where $a_{\gamma}\in Rr(\gamma)$.  

We can now introduce the following definition, which generalizes the definition of $Q(E)$ given in \cite[page 234]{AB}. 

\begin{definition}
 \label{def:relative-regular-algebra}
{\rm  Let $E$ be a row-finite graph, let $K$ be a field, and let $X$ be a subset of regular vertices of $E^0$. Set $R:=P_K^{\rm rat} (E)$ be the algebra of rational series over $E$ and 
$S := R\langle E^*;\tau,\tilde{\delta} \rangle $.
For each $v\in X$ let $q_v= v-\sum_{e\in s^{-1}(v)} ee^*\in R$ and let $I^X$ be the ideal of $S$ generated by all the idempotents $q_v$ with $v\in X$.  
Then the {\it regular algebra of $E$ relative to $X$}, denoted by $Q^X_K(E)$, is the algebra
$$Q^X_K(E):= S/I^X.$$
The {\it regular algebra} of $E$ is the algebra $Q_K(E)= Q_K^{\text{Reg}(E)}(E)$, where $\text{Reg}(E)$ is the set of all the regular vertices of $E$.} \qed
\end{definition}

We summarize below some of the main properties of the algebras $Q_K^X(E)$. The proof of such properties is basically the same as in the non-relative case, see \cite{AB}. 

\begin{theorem}
 \label{thm:properties-relativeQ}
 Let $E$ be a row-finite graph, let $K$ be a field, and let $X$ be a subset of regular vertices of $E^0$. Then the algebra $Q_K^X(E)$ satisfies the following properties:
 \begin{enumerate}[(a)]
  \item $Q_K^X(E)$ is a von Neumann regular ring. 
  \item The subalgebra of $Q_K^X(E)$ generated by $P_K(E)$ and $P_K(E^*)$ is isomorphic to the relative Cohn path algebra $C_K^X(E)$ defined in \cite[Definition 1.5.9]{AAS}.
  \item We have a natural isomorphism $C_K^X(E)\Sigma ^{-1}\cong Q_K^X(E)$, where $\Sigma = \bigcup_{F\in \mathcal F} \Sigma_F$ is the set of square matrices over $P_K(E)$ which are sent
  to invertible matrices by the augmentation map.  
   \end{enumerate}
\end{theorem}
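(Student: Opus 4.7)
The plan is to adapt the arguments of \cite[Section 2]{AB}, verifying that the proofs in the non-relative case $X = \operatorname{Reg}(E)$ go through in the relative setting with only cosmetic changes. The three parts are naturally proved in the order (b), (c), (a), since the characterization of $Q_K^X(E)$ as a universal localization of a relative Cohn path algebra gives the cleanest route to regularity.

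For (b), let $A \subseteq Q_K^X(E)$ be the subalgebra generated by $P_K(E)$ and $P_K(E^*)$. The defining relations of $C_K^X(E)$ hold in $A$: the CK1 relation $e^{*}f = \delta_{e,f}\, r(e)$ is built into the construction of $\tilde\delta_e$ and $\tau_e$ inside $S$, while the CK2 relation $v = \sum_{s(e)=v} e e^{*}$ for $v \in X$ is precisely the identification $q_v \equiv 0$ forced by passing from $S$ to $S/I^X$. This yields a surjective $K$-algebra morphism $\pi\colon C_K^X(E) \twoheadrightarrow A$. For injectivity, I would combine the standard normal-form basis of a relative Cohn path algebra (from \cite{AAS}) with the unique decomposition $\sum L_{a_\gamma} z_{\gamma^*}$ provided by Proposition \ref{prop:algebraSinAB}(ii), checking that $I^X \cap A$ is generated exactly by the CK2-at-$X$ relations, so that no normal-form basis element gets annihilated by $\pi$.

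For (c), the forward map $C_K^X(E)\Sigma^{-1} \to Q_K^X(E)$ exists by the universal property of $\Sigma$-inversion, since every $A \in \Sigma$ is already invertible in $R = P_K^{\mathrm{rat}}(E)$ (by construction of the rational closure in \cite{AB}), and hence has invertible image in $Q_K^X(E)$. For the reverse map, I would note that in $C_K^X(E)\Sigma^{-1}$ a copy of $R$ sits naturally, and I would take the candidates $\widehat{z}_{e^*}$ to be the images of $e^{*}$. The key verification is that these satisfy
\[
\widehat{z}_{e^*}\, r \;=\; \tau_e(r)\, \widehat{z}_{e^*} + \tilde\delta_e(r) \qquad (r\in R),
\]
which holds on the dense subalgebra $P_K(E)$ by a direct path computation and extends to $R$ because both sides respect matrix inversion over $P_K(E)$. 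This produces a morphism $S \to C_K^X(E)\Sigma^{-1}$ that kills the idempotents $q_v$ for $v \in X$ (by CK2), hence factors through $Q_K^X(E)$; comparing the two morphisms on generators shows they are mutually inverse.

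For (a), the cleanest route is to identify $C_K^X(E)$ with a Leavitt path algebra $L_K(F)$, where $F$ is the row-finite graph obtained from $E$ by attaching an infinite tail at each vertex of $\operatorname{Reg}(E) \setminus X$, so that every vertex of $F$ is either a sink or satisfies CK2 in $L_K(F)$. Under this identification, the set $\Sigma$ over $P_K(E)$ maps into the analogous set $\Sigma_F$ over $P_K(F)$, and (c) yields $Q_K^X(E) \cong e\, Q_K(F)\, e$, where $e = \sum_{v \in E^0} v$ is the sum of the original vertices. Since $Q_K(F)$ is von Neumann regular by \cite{AB} and corners of regular rings are regular, (a) follows. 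The main obstacle I foresee is the verification in (c) that the $\tau_e$-derivation identity extends from $P_K(E)$ to the whole rational closure $P_K^{\mathrm{rat}}(E)$ inside $C_K^X(E)\Sigma^{-1}$; this requires tracking how $\tilde\delta_e$ interacts with inverses of matrices in $\Sigma$, a delicate but purely formal computation that follows by applying the rules of matrix inversion to the derivation identity coefficient-wise.
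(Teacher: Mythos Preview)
Your plan for (b) and (c) is exactly what the paper intends: it offers no detailed proof here, only the remark that ``the proof of such properties is basically the same as in the non-relative case, see \cite{AB}'', and your outline is precisely an adaptation of \cite[Section 2]{AB} to the relative setting. So on those two parts you are aligned with the paper.

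For (a) you take a genuinely different route. The paper's implicit argument is simply to rerun the direct regularity proof of \cite{AB} for the quotient $S/I^X$ in place of $S/I^{\operatorname{Reg}(E)}$; nothing in that argument uses that \emph{all} regular vertices appear in the ideal, so the modification is indeed cosmetic. Your approach instead reduces to the non-relative case via the identification $C_K^X(E)\cong L_K(F)$ (or a corner thereof) for a larger graph $F$. This is an attractive idea, but the step ``(c) yields $Q_K^X(E)\cong e\,Q_K(F)\,e$'' hides a real verification: you are comparing the universal localization of $e\,L_K(F)\,e$ at $\Sigma$ (matrices over $P_K(E)$) with the corner of the universal localization of $L_K(F)$ at the strictly larger set $\Sigma_F$ (matrices over $P_K(F)$). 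Universal localization does not in general commute with taking corners, and $\Sigma_F$ contains matrices involving paths into the tails that have no counterpart in $\Sigma$. One can make this work---the tails are ``dead ends'' for the corner, and the extra inverses contribute nothing to $e(\cdot)e$---but this needs an explicit argument, not an appeal to (c). Also note that the standard reference \cite[Theorem 1.5.18]{AAS} adds a single sink rather than an infinite tail, which gives a full isomorphism $C_K^X(E)\cong L_K(F)$ rather than a corner; with that version the comparison of $\Sigma$ and $\Sigma_F$ is slightly different but still requires justification.

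In short: your detour for (a) is fixable, but the paper's intended path---just redoing \cite{AB} relative to $X$---is shorter and avoids the localization-versus-corner issue entirely.
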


\subsection{Definition and first properties of the regular algebra $Q_K(E,C)$.}
\label{subsect:def-of-regular-algebra}
In this subsection  we will define our algebras $Q_K(E,C)$, where $(E,C)$ is an adaptable separated graph. The algebra $Q_K(E,C)$ is defined as a certain universal localization 
of the algebra $\mathcal S _K(E,C)$ introduced in \cite{ABPS}. We briefly recall the definition of $\mathcal S _K(E,C)$ below. Throughout, let $K$ denote a field. 

Let $(E,C)$ be an adaptable separated graph with associated poset $I:=E^0/{\sim}$ (see Definition \ref{def:adaptable-sepgraphs}).

\begin{notation}
\label{not:sigmas}
{\rm \begin{enumerate}
 \item If $p\in I$ is {\bf non-minimal} and {\bf free}, we denote by  $\sigma^p$ the map $\N \to \N$ given by
$$\sigma^p (i) = i+k(p)-1.$$
Moreover, if $1\le j \le  k(p)$, we denote by $\sigma_j^p$ the unique bijective, non-decreasing map from $\{1, \dots ,k(p)\}\setminus \{j\}$ onto $\{1, \dots , k(p)-1\}$.
\item Recall that a connector is an edge $e\in E^1$ such that $s(e)\in E_p^0$ and $r(e) \in E_q^0$, with $q<p$. We will use $\beta$ to denote general connectors, and we remind the reader that the connectors
departing from $v^p$, with $p\in \Ifree$, are of the form $\beta (p,j,s)$ for some $1\le j\le k(p)$ and some $1\le s \le g(p,j)$ (see Definition \ref{def:adaptable-sepgraphs}).
\end{enumerate}}
\end{notation}

The algebra $\mathcal S_K(E,C)$ is the $*$-algebra over $K$ with generators $E^0\cup E^1 \cup \{t_i^v, (t_i^v)^{-1} \mid i\in \N, v\in E^0 \}$ subject to the following relations:

\begin{point}[Relations] \label{pt:KeyDefs} {\rm There are two blocks of relations. In the first block we specify the natural relations arising from the separated
graph structure (cf. \cite{AG12}). In the second block, we give the relations between the generators of $\mathcal S_K (E,C)$, using the special form of our adaptable separated graph.

{\bf Block 1}:

\begin{enumerate}[\rm (i)]
\item For all $v, w\in E^0$, we have $v\cdot w=\delta_{v,w}v$ and $v=v^*$.
\item For all $e\in E^1$, we have:
\begin{enumerate}[\rm (a)]
\item $e=s(e)e=er(e)$
\item $e^*e=r(e)$
\item $e^*f=\delta_{e,f}r(e)$ if $e,f\in X\subseteq C_{s(e)}$.
\item $v=\sum_{e\in X}ee^*$, for $X\in C_v$, $v\in E^0$.
\end{enumerate}
\end{enumerate}

{\bf Block 2}:

\begin{enumerate}
 \item For each {\bf free} prime $p\in I$ and $i=1,\ldots,k(p)$,  we have:
  \begin{enumerate}[\rm (i)]
   \item $ \quad \alpha(p,i)^*\alpha(p,i)=v^p$
    \item $$\alpha(p,i)\alpha(p,i)^*=v^p-\sum^{g(p,i)}_{t=1} \beta(p,i,t)\beta(p,i,t)^* $$
     \item For $i\neq j$, $\,\, \alpha(p,i)\alpha(p,j)=\alpha(p,j)\alpha(p,i)$,   and   $\,\, \alpha(p,i)\alpha(p,j)^*=\alpha(p,j)^*\alpha(p,i)$.
  \item $\beta(p,i,s)^*\beta(p,j,t)=0$ if either $i\neq j$,  or $i=j$ and $s\neq t$. (Note that when $i=j$ and $s\neq t$, these relations follow from the separated graph relations).
  \item $\alpha (p,i)^* \beta (p,i,t) = 0 = \beta (p,i,t)^* \alpha (p,i) $ for all $1\le  i \le k(p)$ and all $1\le t\le g(p,i)$.

  Note that relations (i), (ii) and (v) follow from the separated graph relations, i.e., from the relations given in Block 1.
 \end{enumerate}
\item Moreover, in terms of the $\{t^v_{i}\}$, we impose the following relations:
\begin{enumerate}[\rm (i)]
\item For each $v\in E^0$, $\{ (t_i^v)^{\pm} :  i\in \N \}$ is a family of mutually commuting elements such that
$$ vt_i^v = t_i^v =t_i^v v,\qquad t_i^v(t_i^v)^{-1} = v= (t_i^v)^{-1}t_i^v,\qquad (t_i^v)^*= (t_i^v)^{-1} .$$
\item If $p\in I$ is {\bf regular}, $e\in E^1$ is such that $s(e) \in E_p^0$ and $i\in \N$,
$$t^{s(e)}_i e = e t^{r(e)}_i.$$
\item If $p\in I$ is {\bf free}, $i\in \N$,  $1\le j \le k(p)$ and  $1\le s \le g(p,j)$,
$$(t_i^{v^p})^{\pm} \beta (p,j,s) = \beta (p,j,s) (t^{r(\beta (p,j,s))}_{\sigma^p (i)})^{\pm}, $$
\item If $p\in I$ is {\bf free}, $i\neq j$, and $1\le s \le g(p,j)$,
   $$\alpha(p,i)\beta(p,j,s)=\beta(p,j,s)t^{r(\beta(p,j,s))}_{\sigma^p_j(i)},  \text{ and }\,\,  \alpha (p,i)^* \beta (p,j,s) = \beta(p,j,s)(t^{r(\beta(p,j,s))}_{\sigma^p_j(i)})^{-1}.$$
   \item If $p\in I$ is {\bf free}, $t^{v^p}_i\alpha(p,j)=\alpha(p,j)t^{v^p}_i$ and $t^{v^p}_i\alpha(p,j)^*=\alpha(p,j)^*t^{v^p}_i$ for all $i\in\mathbb N$ and $j\in \{ 1,\dots , k(p) \}$.
\end{enumerate}
\end{enumerate}}
 \end{point}

 \begin{remarks}
  \label{rem:starrelationsOK} {\rm 
  \begin{enumerate}
   \item Since we are working within the category of $*$-algebras, the  $*$-relations of all the relations described in~{\em\ref{pt:KeyDefs}} are enforced in
  the $*$-algebra $\mathcal S_K(E,C)$. However, we warn the reader that the involution $*$ cannot be extended in general to the algebra $Q_K(E,C)$ that we will consider later. 
  \item Although it will not be used in the present paper, we point out that, by \cite[Theorem 4.14]{ABPS}, there is a $*$-isomorphism $\mathcal S_K(E,C) \cong A_K(\mathcal G(E,C))$, 
  where $\mathcal G (E,C)$ is a natural ample groupoid associated to $(E,C)$
  and $A_K(\mathcal G (E,C))$ is the Steinberg algebra of $\mathcal G(E,C)$.  
  \end{enumerate}}
 \end{remarks}

We are now ready to define the algebra $Q_K(E,C)$ as a suitable universal localization of $\mathcal S _K(E,C)$. 

\begin{definition}
 \label{def:Q(E,C)}
 {\rm 
For $v\in E^0$, let $\Sigma _1^v \subseteq v\mathcal S_K(E,C) v$ be the set of all polynomials $p(t_i^v)= 1 + \lambda _1t_i^v+\cdots + \lambda_n(t_i^v)^n\in v\mathcal S_K(E,C)v$, ($n\ge 1$, $\lambda_n\ne 0$).
Consider the universal localization $\mathcal S_K^1(E,C): = \mathcal S_K(E,C)(\bigcup_{v\in E^0} \Sigma_1^v)^{-1}$.

Let $L=K(t_1,t_2,\dots ,)$ be an infinite purely transcendental
extension of $K$. 
For each $v\in E^0$ there is a natural unital embedding $L\to v \mathcal S_K^1(E,C) v$ sending $t_i$ to $t_i^v$. 
For $p(t_i)\in L$, we will denote by $p(t_i^v)$ its image under this embedding. Note that 
relations \ref{pt:KeyDefs}(2)(iii) hold in the form 
\begin{equation}
 \label{eq:commutingwithL}
 f(t_i^{v^p})  \beta (p,j,s) = \beta (p,j,s) \sigma^p (f(t_i^{v^p}))
\end{equation}
whenever $p$ is non-minimal and free, where $\sigma^p\colon L\to L$ is the natural extension of $\sigma^p$ to an endomorphism of $L$.

We now proceed to define sets $\Sigma (p)$ for $p\in I$. We will differentiate between the free and regular cases.
\begin{itemize}
	\item Take $p\in \Ifree$ (cf. \cite{Ara10}). For each polynomial $f(x_i)\in L[x_i:1 \le i\le k(p)]$ in commuting
	variables $\{x_i:1 \le i \le k(p)\}$ and each $j\in \{ 1,\dots , k(p)\}$, write
	$v_{j}(f)$ for the valuation of $f(x_i)$, seen as a polynomial in
	the one-variable polynomial ring $(L[x_i:i \ne j])[x_j]$, at the
	ideal generated by $x_j$. In other words, $v_{j}(f)$ is the
	highest integer $n$ such that $x_j^n$ divides $f$. Write
	$$v(f)=\text{max}\{v_j(f): 1\le j\le k(p)\}.$$
	Note that $\{ \alpha (p,i) : 1\le i \le k(p) \}$ is a family of commuting variables so there is a well-defined evaluation map
	$$L[x_1,\dots ,x_{k(p)}]\to L[\alpha(p,1),\dots ,\alpha(p,k(p))], \quad f(x_i)\mapsto f(\alpha (p,i)).$$
	Let $\Sigma(p)$ be the set of all elements of $v^p\mathcal S_K^1(E,C) v^p$
	given by
	\begin{equation}
	\label{eq:A.6} \Sigma (p)=\{f(\alpha (p,i)): f\in L[x_i] \text{ and } v(f)=0\}.
	\end{equation}
	\item Take $p\in \Ireg$. Here we follow the inspiration provided by \cite{AB}. 
	We consider the graph $E_p$, and we write it in the form $E_p= \varinjlim_F F$, where each $F$ is a complete finite subgraph
	of $E_p$ (see \cite[Section 1.6]{AAS}). Given such complete finite subgraph $F$, we consider the usual path $L$-algebra $P_L(F)$ with coefficients in $L$,
	seen as subalgebra of the corner $v_F \mathcal S _K^1 (E,C) v_F$, where $v_F=\sum_{v\in F^0} v$, and the canonical augmentation map
	$\epsilon ^F \colon P_L(F) \to \oplus_{v\in F^0} vL$. Then $\Sigma (\epsilon^F)$ is the set of all square matrices $A$ over $P_L(F)$
	such that $\epsilon^F(A)$ is invertible as a matrix over $\oplus_{v\in F^0} vL$. Now define
	$$\Sigma (p) = \bigcup_{F} \Sigma (\epsilon^F),$$
	where $F$ ranges over all the complete finite subgraphs of $E_p$.
	\end{itemize}

We can finally define the algebra
$$Q_K(E,C) := \mathcal S _K^1 (E,C)\Big(\bigcup_{p\in I} \Sigma (p) \Big)^{-1}\, ,$$
which will be called {\it the regular algebra} of $(E,C)$. } \qed
 \end{definition}

 The proof of the following lemma follows the same steps as in the proof of \cite[Lemma 2.9]{Ara10}, so we omit it.
 We point out that the idempotent $e(p,q)$ used in that proof must be replaced by the idempotent 
 $v^p-\alpha(p,i)\alpha (p,i)^*$ in our setting. 

\begin{lemma}
\label{lem:commuting} For $p\in \Ifree $, $1 \le i \le  k(p)$ and $f\in
\Sigma (p)$ we have
\begin{equation}
\label{eq:A.19}
(v^p- \alpha (p,i)\alpha (p,i)^*)f^{-1}=(f_0')^{-1}w^*(v^p- \alpha (p,i)\alpha (p,i)^*)=(v^p- \alpha (p,i)\alpha (p,i)^*)(f_0')^{-1}w^*,
\end{equation}
and
\begin{equation}
\label{eq:A.20}
\alpha(p,i)^*f^{-1}=f^{-1}\alpha(p,i)^* + f^{-1}(f_0')^{-1}gw^*(v^p- \alpha (p,i)\alpha (p,i)^*),
\end{equation}
 where $w$ is a monomial in $\{\alpha (p,j) : j\ne i\}$, 
$f_0'\in L[\alpha (p,j): j\ne i]\cap \Sigma (p)$, and $g\in
L[\alpha (p,j) : j=1,2, \dots , k(p)]$.
\end{lemma}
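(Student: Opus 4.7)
The plan is to work directly inside $Q_K(E,C)$, exploiting the special role played by the idempotent $e_i:=v^p-\alpha(p,i)\alpha(p,i)^*$. I would first establish the two basic identities $e_i\alpha(p,i)=0$ and $\alpha(p,i)^*e_i=0$ (the former follows from $\alpha(p,i)^*\alpha(p,i)=v^p$, and the latter is its adjoint), together with the fact that $e_i$ commutes with every $\alpha(p,j)$ and $\alpha(p,j)^*$ for $j\ne i$, by relation~\ref{pt:KeyDefs}(1)(iii) and its $*$-conjugate. Expanding $f=\sum_{n\ge0}\alpha(p,i)^nf_n$ with $f_n\in L[\alpha(p,j):j\ne i]$, the hypothesis $v(f)=0$ forces $v_i(f)=0$, so $f_0\ne 0$. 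I then pick the monomial $w:=\prod_{j\ne i}\alpha(p,j)^{v_j(f_0)}$ and let $f_0'$ be the polynomial quotient $f_0/w\in L[\alpha(p,j):j\ne i]$, giving the factorization $f_0=f_0'w$; the construction forces $v(f_0')=0$, so $f_0'\in\Sigma(p)$ and is invertible in $Q_K(E,C)$.

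Using $e_i\alpha(p,i)^n=0$ for $n\ge1$ together with the commutativity of $e_i$ with $f_0'$ and $w$ gives $e_if=f_0'we_i$. The key algebraic identity $w^*f_0'w=f_0'$ is proved monomial-by-monomial by iterating $\alpha(p,j)^*\alpha(p,j)=v^p$ in the form $(\alpha(p,j)^*)^{n_j}\alpha(p,j)^{\mu_j+n_j}=\alpha(p,j)^{\mu_j}$ (for $\mu_j\ge 0$) and then collecting factors for $j\ne i$; it yields
\[
(f_0')^{-1}w^*e_i\cdot f=(f_0')^{-1}w^*f_0'we_i=(f_0')^{-1}f_0'e_i=e_i,
\]
from which the first identity $e_if^{-1}=(f_0')^{-1}w^*e_i$ follows. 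Its rightward form is immediate from $e_i$ commuting with both $(f_0')^{-1}$ and $w^*$.

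For the second identity, I would expand $\alpha(p,i)^*f=f_0\alpha(p,i)^*+F_1$ with $F_1:=\sum_{m\ge0}\alpha(p,i)^mf_{m+1}\in L[\alpha(p,1),\dots,\alpha(p,k(p))]$, and compute $\alpha(p,i)^*f-f\alpha(p,i)^*=F_1e_i$ using $\alpha(p,i)^n\alpha(p,i)^*=\alpha(p,i)^{n-1}(v^p-e_i)$ for $n\ge1$. Multiplying on both sides by $f^{-1}$ and substituting the first identity (using that $F_1$ commutes with $(f_0')^{-1}$ since both lie in the commutative subalgebra generated by the pairwise commuting $\alpha(p,j)$'s) yields the second formula with $g=-F_1$. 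The main obstacle is the factorization $f_0=f_0'w$: the naive choice $f_0'=f_0$ generally fails to lie in $\Sigma(p)$ (for instance, $f=\alpha(p,1)+\alpha(p,2)$ has $f_0=\alpha(p,2)\notin\Sigma(p)$), and the appearance of $w^*$ rather than an inverse of $w$ reflects the partial-isometry relation $\alpha(p,j)^*\alpha(p,j)=v^p$ rather than any honest invertibility of the $\alpha(p,j)$'s.
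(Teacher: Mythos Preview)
Your proof is correct and follows essentially the same approach as the one the paper defers to in \cite[Lemma~2.9]{Ara10}: expand $f$ in powers of $\alpha(p,i)$, use the idempotent $e_i=v^p-\alpha(p,i)\alpha(p,i)^*$ to kill all but the constant term, factor $f_0=f_0'w$ to make the surviving coefficient invertible, and then derive the second identity from the commutator $\alpha(p,i)^*f-f\alpha(p,i)^*=F_1e_i$. One minor simplification: your monomial-by-monomial verification of $w^*f_0'w=f_0'$ can be replaced by the observation that $f_0'$ commutes with $w$ (both lie in the commutative subalgebra $L[\alpha(p,j):j\ne i]$), so $w^*f_0'w=w^*wf_0'=v^pf_0'=f_0'$, using that the $\alpha(p,j)^*$ for distinct $j$ commute with one another and with the $\alpha(p,l)$ for $l\ne j$.
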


We now present a suitable spanning family for the algebra $Q_K(E,C)$. This extends the 
work done in \cite{Ara10} and in \cite{ABPS}. Recall from \cite{ABPS} that every element of
$\mathcal S_K(E,C)$ can be written as a $K$-linear combination of terms of the form
$$\lambda \mathfrak{m} (p) \nu^*$$
where $\lambda $ and $\nu$ are ``connector paths'' (c-paths for short) and $\mathfrak m (p) $ is a ``monomial'' based at $p\in I$. 
We refer the reader to \cite[Section 2]{ABPS} for the exact meaning of these terms. 

Here we will generalize these notions to the concepts of ``fractional connector path'' (fractional c-path for short)
and ``fractional monomial'' in order to describe $Q_K(E,C)$ in Theorem \ref{thm:linearspanofQ}.

\begin{definition}[Fractional c-path]
\label{def:finitepath}
{\rm Let $(E,C)$ be an adaptable separated graph with associated poset $I$. Then, we define a {\em step} from a vertex $v\in E_p^0$ to a vertex $w\in E_q^0$ with $q<p$, denoted by $\hat\gamma_{v,w}$,as follows:
 \begin{enumerate}
  \item if $v=v^p$  for a free prime $p$, then a step from $v^p$ to $w$ is defined as $$\hat\gamma_{v,w}:=f^{-1} \alpha(p,i)^m\beta(p,i,t)\text{ for some }f\in \Sigma (p),\text{ some }i \text{ and some } m\ge 0, 
  \text{ where } r(\beta(p,i,t))=w.$$
  \item if $v\in E^0_p$ for a regular prime $p$, then a step from $v$ to $w$ is defined as $$\hat\gamma_{v,w}:= f\beta , \text{ with }s(\beta) =v', r(\beta)=w ,$$
  where $v'\in E_p^0$, $f\in vP_L^{{\rm rat}} (E_p)v'\setminus \{ 0 \}$  and $\beta$ is a connector from $v'$ to $w$.
 \end{enumerate}
Then, given two vertices $v\in E_p^0$ and $w\in E_q^0$ in $I$ with $p>q$, we define a fractional c-path from $v$ to $w$ as the concatenation of steps, i.e. we find $p=q_0 > q_1>q_2>\ldots>q_n=q$, and
vertices $v_i\in E^0_{q_i}$, with $v_0=v$ and $v_n=w$, such that $$\gamma_{v,w}:=\hat\gamma_{v_0,v_1}\ldots\hat\gamma_{v_{n-1},v_n}.$$  Moreover, we will say that the fractional $c$-path $\gamma_{v,w}$  has  depth $n$,
and it will be denoted by $\text{depth}(\gamma_{v,w}) =n$.

A {\em trivial} fractional c-path consists of a single vertex $v\in E^0$. These are the fractional c-paths of depth $0$.}\qed

\end{definition}

\begin{remark}\label{Rk:FinitePaths}
	{\rm Note that a c-path in the sense of \cite[Definition 2.4]{ABPS} is a special sort of fractional c-path. Indeed, one just needs to modify $f$ in the latter definition of a step. 
	In particular, in the free prime case one just deletes $f^{-1}$, and one substitutes $f$ by a directed path of finite length connecting $v$ and $v'$ in $E_p$, in the regular case.} 
	\end{remark}

\begin{definition}[Fractional monomial]
\label{def:monomial}{\rm
We continue with our standing assumptions on $(E,C)$.
Now define the fractional monomials as the possible multiplicative expressions one can form using generators (excluding connectors) corresponding to a given prime.
They will be denoted by $\mathbf m(p)$ for $p\in I$. Namely,
\begin{enumerate}
 \item if $p$ is a {\bf free} prime, we define $$\mathbf m (p)=f^{-1}\prod^{k(p)}_{j=1}\alpha(p,j)^{k_j}(\alpha(p,j)^*)^{l_j}, \, \, r\geq 0,
 k_j,l_j\ge 0,$$
 where $f\in \Sigma (p)$. 
 \item if $p$ is a {\bf regular} prime, we define $$\mathbf m(p)= f\nu^*,$$ where $f\in vP_L^{{\rm rat}}(E_p)v'\setminus \{ 0 \}$, and $\nu$ is a finite directed path in $E_p$ with $r(\nu) = v'$ and $v,v'\in E_p^0$. \qed
\end{enumerate}}
\end{definition}

We are now ready to obtain a nice spanning family for our algebra $Q_K(E,C)$. Note that the path $\nu$ in Theorem \ref{thm:linearspanofQ} can be chosen to be a c-path in the sense of \cite[Definition 2.4]{ABPS}.

\begin{theorem}
 \label{thm:linearspanofQ}
 The algebra $Q_K(E,C)$ is the $K$-linear span of the elements of the form
 $\gamma \mathbf m (p) \nu^*$ 
 where $\gamma $ is a fractional c-path, $\mathbf m (p)$ is a fractional monomial at some $p\in I$, and $\nu $ is a c-path . 
 \end{theorem}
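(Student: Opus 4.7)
The plan is to define $T$ as the $K$-linear span of the claimed elements $\gamma\mathbf{m}(p)\nu^*$ and prove $T=Q_K(E,C)$. Containment $T\subseteq Q_K(E,C)$ is immediate, since every inverse $f^{-1}$ appearing inside a fractional c-path or a fractional monomial inverts an element of $\Sigma_1^v\cup\bigcup_{p}\Sigma(p)$ and therefore lies in $Q_K(E,C)$. The reverse inclusion will be established by proving three facts: that $\mathcal{S}_K(E,C)\subseteq T$, that $f^{-1}\in T$ for every $f\in \Sigma_1^v\cup\bigcup_{p\in I}\Sigma(p)$, and that $T$ is closed under multiplication. Once these are in hand, $T$ is a $K$-subalgebra of $Q_K(E,C)$ that contains $\mathcal{S}_K(E,C)$ and all the elements whose inversion defines the universal localization, so the universal property of localization will force $T=Q_K(E,C)$.

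The first two facts are largely bookkeeping. The inclusion $\mathcal{S}_K(E,C)\subseteq T$ is obtained from the normal form theorem of \cite[Section 2]{ABPS}, which writes each element of $\mathcal{S}_K(E,C)$ as a $K$-linear combination of terms $\lambda\mathfrak{m}(p)\nu^*$ with $\lambda,\nu$ ordinary c-paths and $\mathfrak{m}(p)$ an ordinary monomial; by Remark \ref{Rk:FinitePaths} these are special fractional c-paths and fractional monomials, hence lie in $T$. For the inverses, an $f^{-1}$ with $f\in\Sigma(p)$ and $p$ free is itself a fractional monomial (take $k_j=l_j=0$ in Definition \ref{def:monomial}); an $f^{-1}$ with $p$ regular lies in $vP_L^{\mathrm{rat}}(E_p)v'$, which is exactly the leading factor permitted in a regular fractional monomial; and the inverses of elements of $\Sigma_1^v$ sit in $L$, which is already embedded into all the pieces above.

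The main obstacle is multiplicative closure. By $K$-bilinearity it suffices to reduce a product $(\gamma_1\mathbf{m}(p_1)\nu_1^*)(\gamma_2\mathbf{m}(p_2)\nu_2^*)$ to a linear combination of standard forms, and the difficulty concentrates on rewriting the central factor $\nu_1^*\gamma_2$. My plan is a two-stage reduction. First, I will sweep every inverse factor present inside $\gamma_2$ through $\nu_1^*$ to the left, using the commutation identity \eqref{eq:commutingwithL} to move polynomials in $t_i^{v^p}$ past connectors (at the cost of applying $\sigma^p$), Lemma \ref{lem:commuting} to move the $f^{-1}$ at free primes past the $\alpha(p,i)^*$'s occurring in $\nu_1^*$, and the combination of the right-transduction identity \eqref{right-der} with the stability of $P_L^{\mathrm{rat}}(E_p)$ under the maps $\tilde\delta_e$ to commute regular-prime rational functions past dual edges. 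After this sweep, the central factor is built only from ordinary paths and their duals, so the normal form of \cite{ABPS} reduces it to a $K$-linear combination of terms $\lambda\mathfrak{m}(q)\nu^*$; the accumulated inverses then combine with $\gamma_1\mathbf{m}(p_1)\lambda$ into a new fractional c-path followed by a fractional monomial, while $\nu^*\nu_2^*$ collapses to a single c-path, exhibiting the product as an element of $T$. The subtlest point, and the step I expect to require the most careful bookkeeping, is the free-prime sweep: the correction terms produced by Lemma \ref{lem:commuting} must be absorbed into a valid fractional monomial in the sense of Definition \ref{def:monomial}, which in turn forces one to check that the concatenated sequence of steps in the resulting fractional c-path remains strictly descending in the poset $I$, as required by Definition \ref{def:finitepath}.
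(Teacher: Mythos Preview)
Your overall architecture matches the paper's proof exactly: let $S$ be the span, note it contains $\mathcal S_K^1(E,C)$ and the required inverses, and reduce everything to multiplicative closure of $S$; then reduce that in turn to analysing the central factor $\nu_1^*\gamma_2$, invoking Lemma~\ref{lem:commuting}, the transduction relations, and the normal form of \cite{ABPS}.

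There is, however, a genuine omission in your sweep. The pieces of $\nu_1^*$ that must absorb the inverses coming from $\gamma_2$ are not only the $\alpha(p,i)^*$'s (free case) or $e^*$ with $e\in E_p^1$ (regular case): at the top of each level the letter one meets is the dual of a \emph{connector}, and it is the products $\beta(p,j,s)^*f^{-1}$ for $f\in\Sigma(p)$ (free) and $\beta^*f$ for $f\in P_L^{\mathrm{rat}}(E_p)$ (regular) that have to be simplified. Neither of your listed tools covers this: Lemma~\ref{lem:commuting} treats $\alpha(p,i)^*f^{-1}$, and the transduction identity \eqref{right-der} treats $e^*f$ only for $e\in E_p^1$, not for connectors leaving $E_p$. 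The paper isolates exactly these two connector computations as the crucial step and carries them out: in the free case one combines \eqref{eq:A.19} with $\beta(p,j,s)^*=\beta(p,j,s)^*(v^p-\alpha(p,j)\alpha(p,j)^*)$ and relations \ref{pt:KeyDefs}(2)(iii),(iv) to obtain $\beta(p,j,s)^*f^{-1}\in L\,\beta(p,j,s)^*$; in the regular case one writes $f=a(v_FI_m-A)^{-1}b^t$ via Cohn's theorem \cite[Theorem 7.1.2]{free} and uses $\beta^*e=0$ for every $e\in E_p^1$ (since $\beta$ and $e$ lie in the same $X\in C_{s(\beta)}$) to get $\beta^*f\in L\,\beta^*$. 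With these two facts in hand the rest of your plan goes through verbatim, and the concern you flag about the resulting fractional c-path remaining strictly descending in $I$ resolves automatically, since the connector calculations already pull the inverse into $L$ at the level $[r(\beta)]$ below.
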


\begin{proof}
 We start by noting that the product of two fractional monomials corresponding to the same $p\in I$ is a finite sum of fractional monomials. 
 For $p\in \Ifree$, this follows from \eqref{eq:A.20}, since $f^{-1}\alpha (p,i)= \alpha (p,i)f^{-1}$ for all $i$ and all $f\in \Sigma (p)$,
 together with \cite[Lemma 2.7]{ABPS}. 
  For $p\in \Ireg$, use the formula $e^*f= \tilde{\delta} _e(f) + \tau_e (f)e^*$  for $f\in P_L^{{\rm rat}}(E_p)$ and $e\in E_p^1$ and the fact that $P_L^{{\rm rat}}(E_p)$
 is closed under all the right transductions $\tilde{\delta}_e$, for $e\in E_p^1$ (see Subsection \ref{subsect:prelms-on-univ-local}).
 
 Let $S$ be the $K$-linear span of all the terms $ \gamma \mathbf m (p) \nu^*$ as in the statement. Note that $S$ contains (the image of) $\mathcal S_K^1(E,C)$. Moreover, if $p\in \Ifree$ and $f\in \Sigma (p)$, then
 $S$ clearly contains the element $f(\alpha (p,i))^{-1}$, and if $p\in \Ireg$ and $F$ is a complete subgraph of $E_p$, then, by Proposition \ref{prop:algebraSinAB},  $Q_L(F)$ is contained in $v_FSv_F$.   
If we show that $S$ is a ring, then it follows from the above that all the matrices in $\Sigma (p)$ are invertible over the corresponding corner of $S$, and thus we get that $S=Q_K(E,C)$.   
So, to show that $S= Q_K(E,C)$, it is enough to prove that $S$ is closed under multiplication, which amounts to show that a product of two terms $\gamma _1 \mathbf m (p)  \eta_1^*$
and $\gamma _2 \mathbf n (p')\eta_2^*$ as in the statement can be expressed as a $K$-linear combination of terms of the stated form. 
This was shown to be the case when $\gamma _1, \gamma_2 $ are c-paths and $\mathbf m (p), \mathbf n (p')$ are monomials (in the sense of \cite{ABPS}) in \cite[Proposition 2.13]{ABPS}.
Using the observations in the first paragraph, Lemma \ref{lem:commuting}, and the rules established in \cite[Definitions 2.9 and 2.10]{ABPS}, we see that it suffices to check that 
$\beta (p,j,s)^*f^{-1}\in L\beta (p,j,s)^*$ for $f\in \Sigma (p)$ in case $p\in \Ifree$, and that $\beta^* f \in L\beta^*$ for a connector $\beta$ starting at a vertex of $E_p$
and $f\in P_L^{{\rm rat}}(E_p)$ in case $p\in \Ireg$.  
We have, writing $e(p,j)= v^p-\alpha (p,j) \alpha (p,j)^*$ and using \eqref{eq:A.19} and the relations in Block 2 of \ref{pt:KeyDefs},
\begin{align*}
 \beta (p,j,s)^* f^{-1}  & = \beta (p,j,s)^* e(p,j) f^{-1} = \beta (p,j,s)^* (f_0')^{-1} w^* e(p,j) \\
 & =  (\sigma ^p (f_0') (t_{\sigma_j (i)}^{r(\beta (p,j,s))}))^{-1} w((t_{\sigma _j(i)}^{r(\beta (p,j,s))})^{-1}) \beta (p,j,s)^*
\end{align*}
where $w$ is a monomial in $\{\alpha(p,i): i\ne j\}$ (involving only positive powers of the $\alpha (p,i)$),
and $f_0'\in L[\alpha (p,i): i\ne j]\cap \Sigma (p)$,
and $\sigma ^p(f_0')(t_{\sigma_j(i)}^{r(\beta (p,j,s))})\in L$ is the polynomial
obtained by applying $\sigma ^p$ to all the coefficients of $f_0'$
and replacing $\alpha (p,i)$ with $t_{\sigma_j(i)}^{r(\beta (p,j,s))}$,
for $i \ne j$. 

We now consider the case where $p\in \Ireg$. In this case, we have to deal with a term of the form $\beta^* f$ where $\beta$ is a connector starting at a vertex of $E_p$
and $f\in P_L^{{\rm rat}}(E_p)$. 
By \cite[Theorem 7.1.2]{free}, we can write $f= (a_1\cdots  a_m)(v_FI_m -A)^{-1}(b_1 \cdots b_m)^t$, where $a_i,b_i\in P_L(F)$ and $A\in M_m(P_L(F))$ with $\epsilon_F (A) =0$, where $F$ is a 
suitable finite complete subgraph of $E_p$.
Now using that $\mbox{diag} (\beta^*,\dots ,\beta^*)(v_FI_m-A)= \mbox{diag} (\beta^*,\dots ,\beta^*)$, we get that 
$\mbox{diag} (\beta^*,\dots ,\beta^*)(v_FI_m-A)^{-1}= \mbox{diag} (\beta^*,\dots ,\beta^*)$. Since $\beta^* a_i\in L\beta^*$ and $\beta^* b_i\in L\beta^*$ for all $i$, we get that
$\beta^*f\in L\beta^*$, as desired.  

This concludes the proof.
 \end{proof}

 \subsection{A representation of $Q_K(E,C)$.}
 \label{subsect:representation}
 
We are now going to extend the representations studied in \cite{Ara10} to our context.
These are far-reaching extensions of the usual Toeplitz representation, which provide useful information about the structure of the algebra $Q_K(E,C)$. 
 
We start with an elementary (and well-known) lemma:

\begin{lemma}
 \label{lem:elementary}
 Let $L$ be a field and $z$ an indeterminate, and denote by $L[z]_{(z)}$ the localization of $L[z]$ at the maximal ideal $(z)$.
 Let $\epsilon \colon L[z]\to L$ be the augmentation map. The map $\delta \colon L[z] \to L[z]$ given by
 $$\delta (a_0+a_1z+\cdots + a_mz^m)= a_1+ a_2z+\cdots + a_mz^{m-1}$$
is an $\epsilon$-derivation and can be uniquely extended to an $\epsilon$-derivation $\delta \colon L[z]_{(z)}\to L[z]_{(z)}$ by the formula
$$\delta (fg^{-1})  = \delta (f) g^{-1} - \epsilon (f) \epsilon (g)^{-1} \delta (g) g^{-1}$$
for $f, g \in L[z]$ with $\epsilon (g) \ne 0$.  
Moreover $\text{id} -z\delta = \epsilon$. 
\end{lemma}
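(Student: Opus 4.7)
The plan is to reformulate $\delta$ in a uniform way that trivialises all three claims. For any $f \in L[z]$, the element $f - \epsilon(f)$ has zero constant term and is therefore divisible by $z$ in $L[z]$; the resulting quotient is exactly the $\delta(f)$ of the statement, so I would record the identity $\delta(f) = z^{-1}(f - \epsilon(f))$. In this form the equality $\mathrm{id} - z\delta = \epsilon$ is tautological, and the $\epsilon$-derivation rule on $L[z]$ reduces to the one-line calculation
\[
z\bigl(\delta(f)g + \epsilon(f)\delta(g)\bigr) = (f - \epsilon(f))g + \epsilon(f)(g-\epsilon(g)) = fg - \epsilon(f)\epsilon(g) = z\,\delta(fg),
\]
after cancelling the non-zero-divisor $z$.

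To extend $\delta$ to the localisation, I would first note that $\epsilon$ extends uniquely to a ring homomorphism $\tilde\epsilon\colon L[z]_{(z)} \to L$, because $L[z]_{(z)}$ is local with maximal ideal $zL[z]_{(z)}$ and residue field $L$. Hence for every $h \in L[z]_{(z)}$, the element $h - \tilde\epsilon(h)$ lies in $zL[z]_{(z)}$, and we may define
\[
\delta(h) := z^{-1}\bigl(h - \tilde\epsilon(h)\bigr) \in L[z]_{(z)}.
\]
This is visibly an extension of the original $\delta$, it satisfies $\mathrm{id} - z\delta = \tilde\epsilon$ by construction, and it is a $\tilde\epsilon$-derivation by the same three-term computation as in the first paragraph (carried out now inside $L[z]_{(z)}$).

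It remains to recover the explicit formula in the statement together with uniqueness. When $g \in L[z]$ has $\epsilon(g) \ne 0$, $g$ is a unit in $L[z]_{(z)}$; applying the derivation rule to the factorisation $(fg^{-1})\cdot g = f$ yields
\[
\delta(fg^{-1})\,g + \tilde\epsilon(fg^{-1})\,\delta(g) = \delta(f),
\]
and using $\tilde\epsilon(fg^{-1}) = \epsilon(f)\epsilon(g)^{-1}$ (since $\tilde\epsilon$ is a ring homomorphism) and solving for $\delta(fg^{-1})$ produces precisely the formula of the statement. The same identity forces any $\epsilon$-derivation on $L[z]_{(z)}$ restricting to $\delta$ on $L[z]$ to agree with this formula on every fraction, proving uniqueness. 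I do not expect a genuine obstacle here; the only point requiring some care is the well-definedness and ring-homomorphism property of the extension $\tilde\epsilon$, which is standard localisation theory.
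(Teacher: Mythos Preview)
Your proposal is correct and complete. The approach differs from the paper's in a useful way: the paper takes the explicit formula $\delta(fg^{-1}) = \delta(f)g^{-1} - \epsilon(f)\epsilon(g)^{-1}\delta(g)g^{-1}$ as the \emph{definition} of the extension and then verifies $\mathrm{id} - z\delta = \epsilon$ directly on fractions (leaving the remaining checks---well-definedness of the formula under change of representative, and the derivation identity---as routine). You instead define the extension intrinsically as $\delta(h) = z^{-1}(h - \tilde\epsilon(h))$, which is manifestly well-defined since $z$ is a non-zero-divisor and $h - \tilde\epsilon(h) \in zL[z]_{(z)}$; the identity $\mathrm{id} - z\delta = \tilde\epsilon$ and the derivation rule then follow from the same three-term expansion, and the explicit formula is \emph{derived} a posteriori from the Leibniz rule applied to $(fg^{-1})g = f$. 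Your route buys you automatic well-definedness and a uniform proof of all three claims at once; the paper's route is shorter to state but leaves more to the reader.
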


\begin{proof}
 The proof is routine. We just check the equality $\text{id} -z\delta = \epsilon$. We clearly have $f= z\delta (f) +\epsilon (f)$ for $f\in L[z]$, so that
 \begin{align*}
 (\text{id} - z \delta) (fg^{-1}) & = fg^{-1} - z\delta (f) g^{-1} + \epsilon (f)\epsilon (g)^{-1} (z\delta (g))g^{-1} \\
 & = \epsilon (f) g^{-1} + \epsilon (fg^{-1}) - \epsilon (f) g^{-1} = \epsilon (fg^{-1}).
   \end{align*}
This completes the proof. 
\end{proof}

Let $(E,C)$ be an adaptable separated graph with associated poset $I$. Let $\mathcal L (I)$ be the lattice (under set inclusion) of lower subsets of $I$. 
For each $J\in \mathcal L (I)$, let $(E_J,C^J)$ be the restriction of $(E,C)$ to the set $E_J^0= \{ v\in E^0 \mid [v]\in J \}$. Let $e(J)$ be the idempotent in the multiplier algebra
of $Q_K(E,C)$ given by $e(J)= \sum_{v\in E_J^0} v$. Then there is a natural homomorphism $\psi_J \colon Q_K(E_J, C^J) \to e(J) Q_K(E,C) e(J)$ sending the generators of $Q_K(E_J, C^J)$ to the 
corresponding generators in $Q_K(E,C)$. Note that this map is surjective by Theorem \ref{thm:linearspanofQ}.

\begin{theorem}
\label{theor:repres} The algebra $Q_K(E,C) $ acts faithfully
by $K$-linear maps on a $K$-vector space
$$V(I)=\bigoplus _{p\in I} V_I  (p),$$
If $J$ is a lower subset of $I$ then the canonical map 
$\psi_J\colon Q_K(E_J,C^J)\longrightarrow e(J)Q_K(E,C)e(J)$ is an isomorphism, and $V(J)=\oplus _{p\in J}V_{I}(p)$. Moreover
$\psi _J(x)(v)=x(v)$ for all $x\in Q_K(E_J,C^J)$ and all $v\in V(J)$.
\end{theorem}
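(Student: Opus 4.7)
The plan is to construct, for each prime $p\in I$, an explicit $K$-vector space $V_I(p)$ and define a $K$-linear action of the generators of $Q_K(E,C)$ on $V(I):=\bigoplus_{p\in I}V_I(p)$ that mimics the left regular representation but is compatible with the decomposition into strongly connected components. The construction is guided by Theorem \ref{thm:linearspanofQ}: a basis of $V_I(p)$ should be indexed by pairs $(\mathbf{m}(p),\gamma)$, where $\gamma$ is a fractional c-path ending at a vertex of $E_p^0$ and $\mathbf{m}(p)$ is a ``normal-form'' fractional monomial based at $p$. Concretely, following \cite{Ara10}, for $p\in\Ifree$ one takes $V_I(p)$ to be a free module over the Laurent polynomial algebra $L[t_i^{\pm 1}]$ whose basis encodes the admissible monomials $\prod_{j=1}^{k(p)}\alpha(p,j)^{k_j}$ stacked against fractional c-paths coming from strictly larger primes; for $p\in\Ireg$ one takes $V_I(p)$ modelled on the rational-series representation of Proposition \ref{prop:algebraSinAB} applied to $P_L^{\mathrm{rat}}(E_p)$, again glued with fractional c-paths from above.

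The second step is to check that every relation in \ref{pt:KeyDefs} (Blocks 1 and 2) holds for the prescribed action. The $t_i^v$ act as invertible operators on every basis vector they fix (multiplication by $t_i$ in the Laurent variables), so $\mathcal S_K^1(E,C)$ is represented. One then verifies that each $f\in\Sigma(p)$ for $p\in\Ifree$ acts invertibly: by Lemma \ref{lem:commuting} the action of $f(\alpha(p,i))$ on $V_I(p)$, after normalising the $x_j$-valuation, is conjugate to an operator of the form $\mathrm{id}+N$ with $N$ strictly lowering a natural $\mathbb{Z}_{\ge 0}^{k(p)}$-grading, hence invertible by the geometric-series trick developed in \cite{Ara10}. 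For $p\in\Ireg$ and $A\in\Sigma(\epsilon^F)$, invertibility of $A$ on $V_I(p)$ follows from the corresponding property for the algebra $P_L^{\mathrm{rat}}(F)$ via \cite[Theorem 1.20]{AB} together with Proposition \ref{prop:algebraSinAB}. The universal property of the localisation then produces a well-defined homomorphism $\rho\colon Q_K(E,C)\to\mathrm{End}_K(V(I))$.

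Faithfulness is the main obstacle. The strategy is to use the spanning family of Theorem \ref{thm:linearspanofQ} to put a nonzero element $x\in Q_K(E,C)$ into a normal form $x=\sum_i\lambda_i\gamma_i\mathbf m(p_i)\nu_i^*$ where we arrange the terms by increasing depth of $\gamma_i$ and by the prime $p_i$, and then evaluate $x$ on carefully selected ``source'' vectors $\xi_v$ in $V_I([v])$ for $v=s(\nu_i)$. Because $\nu^*$ sends such a $\xi_v$ into a specific basis vector of $V_I(p)$ concentrated at $r(\nu)$, $\mathbf m(p)$ acts within $V_I(p)$ without loss of information (the normalisation of monomials in Definition \ref{def:monomial} was designed precisely to make this injective), and $\gamma_i$ transports the result into a different summand $V_I(q_i)$ of $V(I)$, cancellations between distinct summands are impossible. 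A triangularity argument on the depth of $\gamma_i$ then isolates the scalars $\lambda_i$ and shows that $\rho(x)\ne 0$.

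For the second assertion, observe that for any lower subset $J\in\mathcal L(I)$ the subspace $V(J)=\bigoplus_{p\in J}V_I(p)$ is stable under the action of every generator of $Q_K(E,C)$ corresponding to a vertex, edge, or $t_i^v$ with $[v]\in J$; indeed, hereditarity of $E_J^0$ guarantees that connectors and fractional c-paths landing in $V(J)$ come from vertices in $E_J^0$, and the construction of $V_I(p)$ for $p\in J$ used only data in $J{\downarrow}p$. Restricting the action yields a representation $\rho_J\colon Q_K(E_J,C^J)\to\mathrm{End}_K(V(J))$ which, by the same construction carried out for the adaptable separated graph $(E_J,C^J)$, is the representation on $V(J)$ of the already-constructed action; in particular it is faithful by the argument above. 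Since $\psi_J$ is surjective by Theorem \ref{thm:linearspanofQ} and $\rho\circ\psi_J=\rho_J$ is injective, $\psi_J$ is an isomorphism and the identity $\psi_J(x)(v)=x(v)$ for $v\in V(J)$ is built into the construction.
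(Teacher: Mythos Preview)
Your construction runs in the opposite direction from the paper's, and this is not a cosmetic difference. You propose that a basis of $V_I(p)$ be indexed by fractional c-paths $\gamma$ coming \emph{from strictly larger primes} down to $E_p^0$, together with a monomial $\mathbf m(p)$. The paper instead builds $V(p)$ by order-induction \emph{from below}: for free non-minimal $p$ it sets
\[
V(p)=\bigoplus_{j=1}^{k(p)}\bigoplus_{s=1}^{g(p,j)}L[z_j]_{(z_j)}\otimes_L V(\beta(p,j,s)),
\]
where each $V(\beta(p,j,s))$ lives at a prime $q<p$ already treated, and for regular $p$ it uses $\bigoplus_{\beta}P_L^{\mathrm{rat}}(E_p)s(\beta)\otimes_L V(r(\beta))$. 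Your ``from above'' picture amounts to identifying $V_I(p)$ with the span of elements $\gamma\,\mathbf m(p)$ inside $Q_K(E,C)$; but this span is not a left submodule (left multiplication by $\beta^*$ for a connector $\beta$ ending at $s(\gamma)$ produces a nontrivial $\nu^*$-tail), and if instead you mean the quotient $Q_K(E,C)e(p)/\mathcal I(\{q:q<p\})e(p)$, then faithfulness of the resulting action presupposes precisely the structural facts about $Q_K(E,C)$ that the representation is meant to establish. The from-below construction avoids this circularity because $V(p)$ is defined explicitly in terms of already-constructed vector spaces, and the action of every generator is written down by hand and checked against the relations in~\ref{pt:KeyDefs}.

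There is a second concrete gap in the free case. You assert that $f(\alpha(p,i))\in\Sigma(p)$ acts as $\mathrm{id}+N$ with $N$ strictly lowering a grading, so that a geometric series inverts it; but no completeness or local nilpotence is available on a free module over a Laurent polynomial ring, and Lemma~\ref{lem:commuting} does not supply it. The paper's mechanism is different and essential: on the factor $L[z_j]_{(z_j)}\otimes_L V(\beta(p,j,s))$ the element $f(\alpha(p,i))$ acts as multiplication by a polynomial $p(z_j)\in L[z_j]$ with $p(0)\ne 0$ (because $v(f)=0$), which is \emph{honestly} invertible in the localization $L[z_j]_{(z_j)}$ (Lemma~\ref{lem:elementary}). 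Without this localization the map to $\mathrm{End}_K(V(I))$ does not factor through $Q_K(E,C)$. Finally, the paper's faithfulness argument is not a one-shot triangularity on depth: it is an order-induction in which, at each new $p$, a putative kernel element $x$ is multiplied on the left by terms of the form $\beta(p,i,s)^*(\alpha(p,i)^*)^k$ (free case) or $\beta^*g^{-1}\gamma^*$ (regular case), and on the right by suitable c-paths, until it is pushed into $e(J)Q_K(E,C)e(J)$ where the inductive hypothesis applies; the delicate point is ensuring nonvanishing after these multiplications, which is done by evaluating on explicit vectors $z_j^N\otimes b$ (free case) or $w\otimes b$ (regular case).
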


\begin{proof}
We proceed to build the vector spaces and the corresponding actions by order-induction.
Define the action $\tau_I$ with the property that a vertex $v\in E_p^0$ acts non-trivially only on the component $V(p)$, so that, by definition,
the action of $v$ on a component $V(q)$ with $q\ne p$ is $0$.
Therefore, if we want to define the action of a certain element $x$ with $x= v_1xv_2$, where $v_1,v_2\in E_p^0$, we only have to define its action
on $V(p)$. When $p\in \Ifree$, we will have $\tau _I(v^p)(b)= b$ for $b\in V(p)$, and when $p\in \Ireg$, for each $b\in V(p)$ there will be a finite subset $F$ of $E_p^0$ 
such that $\tau _I(v_F)(b)= b$, where $v_F= \sum_{v\in F} v$.  

Set $I^0:=\text{Min}(I)$, the set of minimal elements of $I$. 
For $p\in I^0\cap \Ifree$, set $V(p)=L$ and let $Q_K(E_p,C^p)=L$ act on $V(p)$ by left multiplication. For 
$p\in I^0\cap \Ireg$, set $V(p)= Q_L(E_p)$, and let $Q_K(E_p,C^p)=Q_L(E_p)$ act on $V(p)$ by left multiplication.
Observe that the spaces $V(p)$ defined here are $L$-vector spaces in a natural way. 

Now assume that $J$ is a lower subset of $I$ containing all the minimal elements of $I$, and that we have defined
the $K$-vector spaces $V(q)$ for $q\in J$, with $wV(q)\ne 0$ for each $w\in E_q^0$, and a faithful action $\tau_J$ of $Q_K(E_J,C^J)$ on $V(J)=\bigoplus_{q\in J} V(q)$
with the desired properties. If $J=I$, we have the desired result. If $J\ne I$, let $p$ be a minimal element in $I\setminus J$, and consider the lower subset $J'=J\cup \{ p\}$. 
We will define an action of $Q_K(E_{J'},C^{J'})$ on $V(J')= V(J)\oplus V(p)$.

First we define a structure of $L$-vector space on each $V(q)$ with $q\in J$. If $q\in \Ifree \cap J$, then we define 
$$f(t_i) \cdot b = \tau_J( f(t_i^{v^q}))(b) \in V(q)$$
for $f(t_i)\in L$ and $b\in V(q)$. If $q\in \Ireg \cap J$ and $b\in V(q)$ then there exists a finite subset $F$ of $E_p^0$ such that 
$\tau_J(v_F)(b) = b$. We define, for $f(t_i)\in L$,
$$f(t_i)\cdot b = \tau_J\Big( \sum_{v\in F} f(t_i^v) \Big) (b).$$
It is easy to see that this does not depend on the choice of $F$ and that this gives a structure of $L$-vector space on $V(q)$.

Now we define a suitable vector space $V(p)$. 
Suppose first that $p\in \Ifree$. Let $z_1,z_2,\dots , z_{k(p)}$ be $k(p)$ indeterminates, and let $L[z_j]_{(z_j)}$ denote the localization of $L[z_j]$ at the maximal ideal $(z_j)$. 
Then we define the $K$-vector space 
$$V(p) = \bigoplus _{j=1}^{k(p)} \bigoplus_{s=1}^{g(p,j)} L[z_j]_{(z_j)}\otimes _L V(\beta (p,j,s)),$$
where $V(\beta (p,j,s))$ is the $L$-vector space $\tau_J(r(\beta (p,j,s)))V([r(\beta (p,j,s))])$. 
(Actually $V(p)$ is a left $L$-vector space in a natural way, but this natural structure is not the one induced by $\tau_{J'}$, because of \eqref{eq:ts-actingonVp}.)  
To define the action $\tau_{J'}$ of $Q_K(E_{J'},C^{J'})$, it is enough to define the action of the generators, and to show that the defining relations are preserved in the representation. 
The image $\tau _{J'}(x)$ of any generator $x$ coming from $(E_J,C^J)$ is defined as $\tau _J(x)$, extended trivially to the new factor $V(p)$. By induction, all the relations 
involving these generators will be preserved by $\tau_{J'}$. It remains to define the action on the rest of generators. The action of $v^p$ on $V(p)$ is the identity (and $0$ on $V(J)$).
The action of the generators $\alpha (p,i)$ is given as follows, for $a_j\in L[z_j]_{(z_j)}$ and $b_j\in V(\beta (p,j,s))$:
$$\tau_{J'}(\alpha (p,i)) (a_j\otimes b_j) =\begin{cases}
                                             t_{\sigma^p_j(i)} a_j\otimes b_j & \text{ if } i\ne j\\
                                             z_ja_j\otimes b_j        & \text{ if } i=j . 
                                            \end{cases}$$
                                            The action of $\alpha (p,i)^*$ is given as follows, for $a_j\in L[z_j]_{(z_j)}$ and $b_j\in V(\beta (p,j,s))$:
$$\tau_{J'} (\alpha(p,i)^*) (a_j\otimes b_j) = \begin{cases}
                                             t_{\sigma^p_j(i)}^{-1} a_j\otimes b_j & \text{ if } i\ne j\\
                                             \delta (a_j)\otimes b_j        & \text{ if } i=j , 
                                            \end{cases}$$
where $\delta$ is the endomorphsim of $L[z_j]_{(z_j)}$ described in Lemma \ref{lem:elementary}. One can easily show that the relations (1)(i) and (1)(iii) in \ref{pt:KeyDefs} 
are preserved by this assignment.
The action on the generators $(t_i^{v^p})^{\pm}$ is defined by
\begin{equation}
 \label{eq:ts-actingonVp}
\tau _{J'} ((t_i^{v^p})^{\pm}) (a_j\otimes b_j)  = t_{\sigma^p(i)}^{\pm} a_j\otimes b_j.
 \end{equation}
Note that relations \ref{pt:KeyDefs}(2)(i) and \ref{pt:KeyDefs}(2)(v) are clearly preserved. 
Now observe that, by Lemma \ref{lem:elementary}, for all $1\le i\le k(p)$ we have that the action of $v^p - \alpha (p,i)\alpha (p,i)^*$ on $V(p)$ is precisely the projection onto 
$$\bigoplus _{s=1}^{g(p,i)} L\otimes_L V(\beta (p,i,s)).$$
This gives us a clue on how to define the action of the generators $\beta (p,i,s)$. Namely we define the action of $\beta (p,i,s)^*$ as the 
natural isomorphism  from $L \otimes _L V(\beta (p,i,s))$ onto $V(\beta (p,i,s))$. 
The action of $\beta (p,i,s)^*$ is trivial on the complement of $L \otimes _L V(\beta (p,i,s))$. The action of $\beta (p,i,s)$ is determined by the inverse of the
above isomorphism, so that the image of $\beta (p,i,s)^* \beta (p,i,s)$ is the identity on $V(\beta (p,i,s))$. 
Relations (1)(ii),(iv),(v) in \ref{pt:KeyDefs} are easily checked. 

We check now relation \ref{pt:KeyDefs}(2)(iv). For $i\ne j$ and $b_j\in V(\beta (p,j,s))$, we compute
\begin{align*}
\tau_{J'} & (\alpha (p,i))(\tau_{J'}(\beta (p,j,s))(b_j)) = \tau_{J'} (\alpha (p, i))(1\otimes b_j) = t_{\sigma^p_j(i)}\otimes b_j\\
& = 1\otimes t_{\sigma^p_j(i)}\cdot b_j = \tau_{J'}(\beta (p,j,s))(\tau_{J'}(t^{r(\beta (p,j,s))}_{\sigma^p_j(i)})(b_j)), 
\end{align*}
proving that $\tau_{J'}(\alpha (p,i))\circ \tau_{J'}(\beta (p,j,s)) = \tau_{J'}(\beta (p,j,s)) \circ \tau_{J'}(t^{r(\beta (p,j,s))}_{\sigma_j(i)})$.
The proof of the other equality in \ref{pt:KeyDefs}(2)(iv) and the corresponding $*$-relations is similar. We also leave to the reader to check 
\ref{pt:KeyDefs}(2)(iii). 
Since all the relations in the definition of $\mathcal S_K(E,C)$ are preserved we obtain a well-defined $K$-algebra homomorphism $\mathcal S_K(E,C) \to
\text{End}_K(V(J'))$, which clearly extends to $\mathcal S_K^1(E,C)$.  Let $f(\alpha (p,i))\in \Sigma (p)$. We have to show that $\tau_{J'}(f)$ is invertible 
as a endomorphism of $V(p)$. But for every $1\le j\le k(p)$ the component of $\tau_{J'}(f)$ on the factor $L[z_j]_{(z_j)}\otimes V(\beta (p,j,s))$
is given by left multiplication by a polynomial $p(z_j) = f_0+f_1z_j+\cdots +f_mz_j^m\in L[z_j]$, where $f_i\in L$ and $f_0\ne 0$ because $f\in \Sigma (p)$. Therefore 
$p(z_j)$ is invertible in $L[z_j]_{(z_j)}$ and multiplication by $p(z_j)^{-1}\in L[z_j]_{(z_j)}$ gives the inverse of the restriction of $\tau_{J'}(f)$ to this factor.
This shows that we have a well-defined representation $\tau_{J'}$ from $Q_K(E_{J'}, C^{J'})$ on $V(J')= V(J) \oplus V(p)$.

Now we show that $\tau _{J'}$ is injective. For this purpose, suppose that $x= \sum_{i=1}^r a_i \lambda _i \mathbf m_i (p_i)  \nu_i^*$ is a nonzero element of $Q_K(E_{J'}, C^{J'})$ such that
$\tau_{J'}(x)=0$, where $a_i\in K\setminus \{0 \}$, $\lambda_i $ are fractional c-paths, $\mathbf m _i (p_i)$ are fractional monomials, and $\nu_i$ are c-paths (see Definitions 
\ref{def:finitepath} and \ref{def:monomial} and Theorem \ref{thm:linearspanofQ}).
If $v^px=0$ and $xv^p= 0$, then $x=0$ using the induction hypothesis. So we can assume that either $v^px\ne 0$ or $xv^p\ne 0$. Let us assume the former, a similar argument can be done for the latter.
If $v^px\ne 0$, then we can assume that $\lambda _i = v^p \lambda_i $ for all $i$. Left multiplying by a suitable element of $\Sigma (p)$, we can further assume that each $\lambda _i$ 
is either trivial (i.e. $\lambda_i =v^p$) or of the form $\alpha (p,j)^{m_j}\beta (p,j,s)\cdots $ for some $j's$ and $m_j\ge 0$. 

If $\mathbf m_i(p_i)$ is a fractional monomial in the support 
of $x$ such that $p_i=p$, then by successive replacements of terms $\alpha (p,i)\alpha(p,i)^*$ by $v^p- \sum_{s=1}^{g(p,i)} \beta (p,i,s)\beta (p,i,s)^*$ and after rearranging the terms,
we may assume that these monomials are of the form 
$$f(t_i^{v^p})\alpha(p,1)^{r_1}\alpha(p,2)^{r_2} \cdots \alpha(p,k(p))^{r_{k(p)}},$$ 
where $r_t\in \Z$ and $f$ is a nonzero element of $L$. 
(Here we use the convention that
$\alpha(p,j)^0=v^p$ and $\alpha (p,j)^{-i}= (\alpha(p,j)^*)^i$ for $i>0$.)

With all these standing assumptions, we can further suppose that the family $\{ \lambda_i \mathbf m_i (p_i) \nu_i^*\mid i=1,\dots ,r \}$ is $K$-linearly independent.
Now we observe that the expression of $x$ must involve terms $\gamma_i \mathbf m _i(p_i) \nu_i^*$ with $p_i=p$, which with our present assumptions, means that
$\gamma_i = v^p$, $\nu_i= v^p$ and that $\mathbf m _i = f(t_i^{v^p}) \prod_{j=1}^{k(p)} \alpha(p,j)^{r_j}$ with $r_j\in \Z$ and $f\in L\setminus \{ 0 \}$. Otherwise we can find a 
term $\beta (p,i,s) ^*(\alpha (p,i)^*)^k$ so that
$$ x':= \beta (p,i,s) ^*(\alpha (p,i)^*)^k x \ne 0 .$$
 Now, if $x'v^p\ne 0$, we can find another term $\alpha (p,j)^l \beta (p,j,t) $ so that 
$x'':= x'\alpha (p,j)^l \beta (p,j,t) $ is a nonzero element in the kernel of $\tau_{J'}|_{e(J)Q_K(E_{J'},C^{J'})e(J)}$. Since $v^px''=x''v^p=0$,
this is impossible by the induction hypothesis. If already $x'v^p=0$, then $x'$ itself gives the desired contradiction.    

In conclusion, we can assume that $x= x_0+x_1$, where $x_0$ is of the form $f(\alpha (p,i),\alpha (p,i)^*)$ for a nonzero polynomial $f\in L[x_1^{\pm}, x_2^{\pm},\dots , x_{k(p)}^{\pm}]$,
and where $x_1= \sum_{i=1}^m \gamma _i\mathbf m (p_i) \eta _i^*$, where each $\eta_i$ is a non-trivial c-path, and so it starts with a term of the form
$\alpha (p,j_i)^{u_i}\beta (p,j_i,t_i)$ for some $u_i\ge 0$ and some $j_i,t_i$. Now select any $j$ and $t$ and let $b$ be a nonzero element of $V(\beta (p,j,t))$. 
Let $N$ be a positive integer larger than all the integers $u_i$ above, for $i=1,\dots ,m$. Then we will have that $\tau_{J'}(x_1)(z_j^N\otimes b)=0$.
If we also choose in addition $N$ bigger than all the powers of $\alpha (p,j)^*$ appearing in the expression of $f$, we will obtain that
$$\tau _{J'} (x_0) (z_j^N \otimes b) = g(z_j)\otimes b ,$$
where $g(z_j)\in L[z_j]\setminus \{ 0 \}$.
This shows that $\tau_{J'}(x)(z_j^N\otimes b ) \ne 0$, and we have reached a contradiction. 

Assume now that $p\in \Ireg$. Let $C$ be the family of connectors $\beta $ such that $s(\beta) \in E_p^0$, and set $R=P_L^{\rm rat} (E_p)$
and $V(r(\beta) ) := r(\beta) V([r(\beta)])$ for $\beta \in C$. Also we set $X= E_p^0\setminus s(C)$ and note that by the assumption that $|s_{E_p}^{-1}(v)|\ge 2$, 
$X$ is a subset of regular vertices of $E_p$, 
so that we can consider the relative regular algebra $Q^X_L(E_p)$ (see Definition \ref{def:relative-regular-algebra}).

Define
$$V(p) =  \bigoplus _{\beta \in C} Rs(\beta) \otimes _L  V(r(\beta))  .$$ 
Now we define the action of the generators corresponding to $E_p$. Let $e\in E_p^1$.
Then $\tau_{J'}(e)$ is given by left multiplication by $e$ in any of the factors of the above sum, and similarly, the action of any vertex $v\in E_p^0$ is given by left multiplication. 
For $e\in E_p^1$, the element $e^*$ acts on a
factor $Rs(\beta ) \otimes _L V(r(\beta ))$, by
$$\tau_{J'} (e^*) (r\otimes b) = \tilde{\delta}_e (r)\otimes b .$$
The action of the elements $(t_i^v)^{\pm}$, for $v\in E_p^0$, is also given by left multiplication in all the factors.
Note that $R$ acts by left multiplication on $V(p)$. In particular the image by $\tau_{J'}$ of every element of $\Sigma (p)$ is invertible in (matrices over) $\text{End}_K(V(p))$. 
Now observe that for any $w\in E_p^0$ and $r\in R$, we have the following identity:
$$\sum_{e\in s_{E_p}^{-1}(w)} (L_e\circ \tilde{\delta}_e) (r) + \epsilon_w (r) = wr ,$$
where $\epsilon_w(r)$ is the $w$-component of the augmentation map $\epsilon \colon R\to \bigoplus_{v\in E_p^0} vL$. 
Using this we may easily check that the above assignments give an action of $Q^X_L(E_p)$ on $V(p)$, denoted also by $\tau_{J'}$, and that 
$\tau_{J'} (w- \sum_{e\in s_{E_p}^{-1} (w)} ee^*) $ is nonzero if and only if $w=s(\beta )$ for some $\beta \in C$, and that in this case we have
that $\tau_{J'} (w- \sum_{e\in s_{E_p}^{-1} (w)} ee^*) $ is the projection onto the factor
$$\bigoplus _{\beta \in C\cap s_E^{-1}(w)} wL \otimes_L V(r(\beta ))$$
of $V(p)$. Again this gives us a clue on how to define the action of the connectors. Namely for a connector $\beta\in C$, the action of $\beta ^*$ on $V(p)$ is 
given by the natural isomorphism from  $wL \otimes_L V(r(\beta ))$ onto $V(r(\beta ))$ on this factor, and $0$ on the complement. The action of $\beta$ is given by the inverse of the above isomorphism,
so that $\beta^*\beta $ is precisely the projection onto $V(r(\beta ))$ for every $\beta \in C$. Also it is obvious from the above calculation that
$$ \sum _{e\in s_E^{-1}(w)} ee^* = \sum_{e\in s_{E_p}^{-1}(w)} ee^* + \sum _{\beta \in C\cap s_E^{-1}(w)} \beta \beta^* $$
acts by left multiplication by $w$ on $V(p)$, so that relations of the form \ref{pt:KeyDefs}(ii)(d), for $v\in E_p^0$, are preserved by the representation.

Now using that $t_i \cdot b= \tau_J(t_i^{r(\beta )})(b)$ for each $b\in V(r(\beta))$, we see that relations \ref{pt:KeyDefs}(2)(ii) are satisfied for $p$. 

Therefore we have obtained a representation of $Q_K(E_{J'},C^{J'})$ on $V(J')=V(J)\oplus V(p)$. Note that $wV(p)\ne 0$ for each $w\in E_p^0$. 
It remains to show that it is injective.  
This is similar to the argument above. Suppose that $x$ is a nonzero element in $Q_K(E_{J'}, C^{J'})$ such that
$\tau_{J'}(x)=0$. By an argument similar to the one used above, we can assume that there is $v\in E_p^0$ such that $vx=x$.
Suppose that in the expression $x= \sum _{i=1}^r a_i \gamma _i \mathbf m _i (p_i) \nu_i^*$ given by Theorem \ref{thm:linearspanofQ} we have that $p_i< p$ for all $i=1,\dots ,r$.
We can then write $x$ as a finite sum $x= \sum _{\beta \in C} (\sum _{i=1}^{d_{\beta}} a_i^{(\beta)} \beta b_i^{(\beta)})$, where $a_i^{(\beta)} \in P_L^{\rm rat} (E_p)$, 
and $b_i^{(\beta)} \in r(\beta )Q_K(E_{J'}, C^{J'})$.
Select $\beta _0 \in C$ such that $\sum _{i=1}^{d_{\beta_0}} a_i^{(\beta_0)} \beta_0 b_i^{(\beta_0)} \ne 0$. Using \ref{pt:KeyDefs}(2)(ii), we may assume that the family 
$\{ b^{(\beta_0)}_i : i=1,\dots , d_{\beta_0} \}$ is $L$-linearly independent. We may also assume that all paths $\lambda $ in the support of each
$a^{(\beta _0)}_i$ satisfy that $s(\lambda ) = v$ and $r(\lambda ) = s(\beta _0)$. 
Now let $\gamma \in \text{Path} (E_p)$ be a path of minimal length appearing in the support of the elements $a^{(\beta_0)}_i$, $i=1,\dots , d_{\beta_0}$. By simplicity of notation, let us assume that 
$\gamma $ appears in the support of $a^{(\beta_0)}_1$. 
Setting $v':= s(\beta_0)$, we see that 
all paths in the support of $\gamma^* a^{(\beta_0)}_1$ start and end at $v'$, and in addition we have that $\epsilon _{v'}(\gamma^* a^{(\beta_0)}_1) \ne 0$. 
It follows that $g := \gamma^* a^{(\beta_0)}_1$ is invertible in $v'P^{\rm rat}_L(E_p)v'$. We denote by $g^{-1}$ its inverse in $v'P^{\rm rat}_L(E_p)v'$.

We now compute 
$$\beta_0^* (g^{-1} \gamma^* a_i^{(\beta_0)})= \epsilon_{v'} (g^{-1} \gamma ^* a_i^{(\beta_0)} ) \beta_0^* ,$$
and  
$$\beta_0^* (g^{-1} \gamma ^* a_i^{(\beta )}) \beta = 0 \qquad \text{for } \beta \ne \beta_0 .$$
It follows that $x':=\beta_0^* g^{-1} \gamma^* x = b_1^{(\beta_0)} + \sum_{i=2}^{d_{\beta_0}} \epsilon_{v'} (g^{-1} \gamma ^* a_i^{(\beta_0)} ) b_i^{(\beta_0)}$, and this element is nonzero because the 
family $\{ b_i^{(\beta_0)}\}$ is $L$-linearly independent. We have thus obtained a nonzero  element $x' $ in the kernel of $\tau_{J'}$  such that $wx'=0 $ for all $w\in E_p^0$. 
If $x'w=0$ for all $w\in E_p^0$, then we arrive to a contradiction with the induction hypothesis. If $x'w\ne 0$ for some $w\in E_p^0$ then an easier argument enables us to pick a term of the form
$\gamma' \beta' $ with $\gamma'\in \text{Path}(E_p)$ and $\beta'\in C$ such that $x'':= x'\gamma ' \beta'$ is a nonzero element in the kernel of $\tau_{J'}$ and
belongs to the image of the natural map from $Q_K(E_J,C^J)$ to $Q_K(E_{J'}, C^{J'})$. This is a contradiction. 

Therefore, we can assume that $x= x_0+x_1$, where $x_0$ is nonzero and belongs to the image of the natural map from $Q^X_L(E_p)$ to $Q_K(E_{J'},C^{J'})$ and 
$ x_1= \sum_{i=1}^s \lambda _i \mathbf m _i (p_i) \eta_i^*$, where each $\eta_i$ is a c-path starting with $\alpha _i\beta_i$ with $\alpha_i \in \text{Path}(E_p)$ and $\beta_i \in C$.
Let $v\in E_p^0$ such that $xv\ne 0$. Then we must have that $x_0v\ne 0$. Indeed, if $x_0v=0$ we may apply the above argument to $xv$ to get a contradiction.
We may thus assume that $x=xv$ and $x_iv=x_i$ for $i=0,1$. 
Let $ u = \sum_{\gamma \in \text{Path}(E_p)}a_{\gamma }\gamma^*$ be a lifting in $S:= R\langle E_p^*; \tilde{\delta},\tau \rangle $ of $x_0$ such that $uv=v$, and all
$a_{\gamma}\in R r(\gamma )$. If $xe=0$ for all $e\in s_{E_p}^{-1}(v)$ then we have
$$x= xv= x\Big( \sum _{e\in s_{E_p}^{-1}(v)} ee^* + \sum _{\beta \in C\cap s_E^{-1}(v)} \beta\beta^* \Big) = x\Big( \sum _{\beta \in C\cap s_E^{-1}(v)} \beta\beta^* \Big).$$
This gives again a contradiction, using the above arguments. Iterating this reasoning, we obtain that for any length $k>0$ there exists a path $\mu\in \text{Path} (E_p)$ of length $k$ such that 
$x\mu \ne 0$. Again, we may then deduce that $x_0 \mu \ne 0$. By taking $k$ larger than all the lengths of the paths $\gamma $ in the support of $u$, we shall obtain that
$0\ne u \mu\in P_L^{\rm rat}(E_p)$. Moreover if we choose in addition the length $k$ strictly larger than the lengths of all the paths $\alpha_i$,
$i=1,\dots ,s$, we obtain that $x_1 \mu =0$. Therefore $0\ne x\mu = x_0 \mu $ and it is represented by the element $u':= u\mu \in P_L^{\rm rat} (E_p)$. 
Let $w := s(\beta) $ for some $\beta \in C$, and let $\eta $ be a path in $E_p$ with $s(\eta) = r(\mu)$ and $r(\eta)= w$. Let $b$ be any nonzero element of $V(r(\beta ) )$.
Then $u' \eta \ne 0$ and for the element $ w \otimes b\in Rw\otimes V(r(\beta ))$ we have
$$\tau_{J'}(x\mu \eta )(w\otimes b)= \tau_{J'}(x_0 \mu \eta )(w\otimes b)= u'\eta \otimes b \ne 0.$$
Therefore we get a contradiction. This shows that the action $\tau_{J'}$ is faithful.

Since the poset $I$ is finite, this process terminates and we get the desired faithful representation of $Q_K(E,C)$. Now it is easy to deduce that 
$\psi_J$ is injective for any lower subset $J$ of $I$. Indeed, if $\psi _J(x)=0$ for $x\in Q_K(E_J,C^J)$, then $0=\tau _I (\psi_J(x)) (v) = \tau_J (x) (v)$ for all $v\in V(J)$, and so
$\tau_J(x) = 0$ . Since $\tau_J$ is faithful, we get that $x= 0$.  
\end{proof}

Let us draw some immediate consequences of Theorem \ref{theor:repres}. For $p\in I$ denote by $e(p)$ the idempotent $\sum_{v\in E_p^0} v$ in the multiplier algebra of $Q_K(E,C)$.
Note that $e(p)= v^p$ if $p\in \Ifree$.

Recall from Section \ref{sect:Preliminaries} that $\mathcal L (I)$ denotes the lattice of lower subsets of the poset $I=E^0/{\sim}$, and that, for a ring $R$, $\mathcal L (R)$ denotes the lattice of ideals of $R$.
For each $J\in \mathcal L (I)$, define $\mathcal I (J)$ to be the $K$-linear span of terms of the form $\lambda \mathbf m  (p) \nu^*$ with $p\in J$, where $\lambda $ is a
fractional c-path, $\mathbf m (p)$ is a fractional monomial and $\nu$ is a c-path (see Definitions \ref{def:finitepath} and \ref{def:monomial}). 
It is easy to show that $\mathcal I (J)$ is an ideal of $Q_K(E,C)$, and we have the following result:

\begin{corollary}
 \label{cor:conseqs-rep-thm}
With the above notation, let $p\in I$ and set $J= \{ q\in I : q<p \}$, which is a lower subset of $I$. Then the following holds:
 \begin{enumerate}[(i)]
  \item If $p\in \Ifree$, then 
  there is a natural isomorphism $$v^p Q_K(E,C)v^p/v^p\mathcal I (J) v^p \cong L(z_1,\dots ,z_{k(p)}).$$
  \item If $p\in \Ireg$, then there is a natural isomorphism
  $$e(p)Q_K(E,C)e(p)/e(p) \mathcal I (J) e(p) \cong Q_L(E_p).$$
 \end{enumerate}
\end{corollary}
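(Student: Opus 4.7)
The proof rests on two pillars: the spanning family from Theorem~\ref{thm:linearspanofQ} and the faithful representation of Theorem~\ref{theor:repres}. The plan is to first reduce to the case where $p$ is maximal in $I$ via the isomorphism $\psi_{J \cup \{p\}}$, then identify the spanning elements that survive the quotient, construct a natural surjection $\bar\Phi$ onto the target, and finally exhibit an inverse via a universal property.

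Setting $J' = J \cup \{p\}$ and applying Theorem~\ref{theor:repres} to the lower subset $J'$, one may assume $I = J'$, so that $p$ is maximal. By Theorem~\ref{thm:linearspanofQ}, $e(p) Q_K(E,C) e(p)$ is spanned over $K$ by terms $\lambda \mathbf{m}(p') \nu^{*}$ with both $\lambda$ and $\nu$ starting in $E_p^0$ and $p' \le p$. After quotienting by $e(p) \mathcal{I}(J) e(p)$, terms with $p' < p$ vanish; since fractional c-paths are strictly descending (Definition~\ref{def:finitepath}), the remaining terms force $\lambda$ and $\nu$ to be trivial, so the spanning set reduces to the fractional monomials $\mathbf{m}(p)$ alone. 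Explicitly these are $f^{-1}\prod_{j} \alpha(p,j)^{k_j}(\alpha(p,j)^{*})^{l_j}$ in case (i) and $f\nu^{*}$ with $f \in P_L^{\mathrm{rat}}(E_p)$, $\nu \in \mathrm{Path}(E_p)$ in case (ii).

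Next I define the natural map $\Phi$ on generators: $\alpha(p,j) \mapsto z_j$ and $t_i^{v^p} \mapsto t_i$ in case (i); $E_p$-generators to themselves in case (ii); and every connector $\beta$ to zero. Verifying compatibility with the defining relations reduces to checking that the separated-graph identity $v = \sum_{e \in C_v} ee^{*}$ collapses to $1 = z_j z_j^{-1}$ in case (i) (since $v^p = \alpha(p,j)\alpha(p,j)^{*} + \sum_s \beta(p,j,s)\beta(p,j,s)^{*}$ loses its connector summand) and to the Leavitt-path identity $v = \sum_{e \in s^{-1}_{E_p}(v)} ee^{*}$ in case (ii). The map $\Phi$ factors through $\mathcal{I}(J)$ by construction, and the induced $\bar\Phi$ is surjective onto the target.

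For injectivity I construct an inverse $\psi$. In case (i), setting $\bar Q := e(p) Q_K(E,C) e(p)/e(p)\mathcal{I}(J)e(p)$, the images of $\alpha(p,j)$ and $t_i^{v^p}$ in $\bar Q$ commute and are invertible (using $\alpha(p,j)\alpha(p,j)^{*} = v^p$ once the $\beta$'s die), and every nonzero polynomial in them is invertible by writing $f = z^{a} g$ with $g \in \Sigma(p)$; the universal property of the fraction field then yields $\psi: L(z_1,\ldots,z_{k(p)}) \to \bar Q$, and $\psi \circ \bar\Phi = \mathrm{id}$ holds on the reduced spanning set. Case (ii) is analogous: $\psi: Q_L(E_p) \to \bar Q$ sends edges and vertices to themselves, with well-definedness provided by Theorem~\ref{thm:properties-relativeQ}(c) together with the fact that all $\Sigma(p)$-matrices have invertible images in $\bar Q$. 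The main obstacle I anticipate is case (ii): matching the global $L$-coefficient field of $Q_L(E_p)$ with the vertex-local family $\{t_i^v\}_{v \in E_p^0}$ consistently via Block~2(2)(ii) ($t_i^{s(e)}e = et_i^{r(e)}$), which guarantees that the $L$-action descends compatibly through the edges of $E_p$.
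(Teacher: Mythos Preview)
Your proposal is correct and follows essentially the same approach as the paper: define the natural surjection onto the target algebra (the paper calls it $\rho$, you call it $\Phi$) and identify its kernel with $e(J')\mathcal I(J)e(J')$. The paper simply asserts that the kernel equals this ideal, whereas you fill in that step by constructing an explicit inverse $\psi$ via the universal properties of $L(z_1,\dots,z_{k(p)})$ and $Q_L(E_p)$; this is a natural and valid way to justify the assertion.
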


\begin{proof} Let $J'= J \cup \{ p \}$. 
(i) We can define a $K$-algebra homomorphism
$$\rho \colon Q_K(E_{J'}, C^{J'}) \to L(z_1,\dots , z_{k(p)})$$
by sending $\alpha (p,i)$ to $z_i$, $\alpha (p,i)^*$ to $z_i^{-1}$, $t_i^{v^p}$ to $t_i$ and all the other generators to $0$. By Theorem \ref{theor:repres}, 
$Q_K(E_{J'}, C^{J'})$ can be identified with the algebra $e(J')Q_K(E,C) e(J')$. 
Note that $\rho$ is surjective and that its kernel is precisely the 
ideal $e(J') \mathcal I (J) e(J')$ of $Q_K(E_{J'},C^{J'})$. It now follows that 
$v^p Q_K(E,C)v^p / v^p \mathcal I (J) v^p \cong L(z_1,\dots , z_{k(p)})$, as desired. 

(ii) As in (i), there is a surjective algebra homomorphism $Q_K(E_{J'}, C^{J'})\to Q_L(E_p)$, and using Theorem \ref{theor:repres} we can identify
$Q_K(E_{J'}, C^{J'})$ with $e(J')Q_K(E,C) e(J')$. The same argument as before gives us the desired isomorphism
from $e(p) Q_K(E,C) e(p)/e(p)\mathcal I (J) e(p)$ onto $Q_L(E_p)$. 
\end{proof}

\subsection{A direct sum decomposition of $Q_K(E,C)$}

We now introduce another set of monomials into the picture, which we call the reduced fractional monomials. Basically these monomials constitute a suitable lifting of natural
generating systems of $L(z_1,\dots , z_{k(p)})$ and $Q_L(E_p)$, respectively, with respect to the maps introduced in Corollary \ref{cor:conseqs-rep-thm}.
We will use these monomials to define certain linear subspaces $Q_{(\gamma_1,\gamma_2)}$, which provide a useful direct sum decomposition of $Q_K(E,C)$ (see Theorem \ref{thm:reducedmonomialsdirectsum}). 

\smallskip

As usual we differentiate the free and the regular cases. Let $p$ be a free prime. Set $k:=k(p)$ and $\Sigma := \{ f\in L[z_1,\dots ,z_k]\mid v(f) =0 \}$. 
Note that $L(z_1,\dots ,z_k) $ is the directed union of the $L$-vector spaces $L_f$, for $f\in \Sigma$, where $L_f$ is the $L$-linear span of the family 
$\{ z_1^{r_1}\cdots z_k^{r_k} f^{-1} \mid r_i\in \Z \}$. Here the order in $\Sigma $ is induced by divisibility: $f\le h$ for $f,h\in\Sigma $  if and only if $h=fg$ for $g\in \Sigma$.  

As in \cite{Ara10}, there is a well-defined linear map $T$ sending 
an element $z_1^{r_1}\cdots z_k^{r_k} f^{-1}$ of $L_f$ to the element $(\prod_{j=1}^r \alpha (p,j)^{r_j}) f(\alpha (p,i))^{-1}$. 
Here we note that it is important to place $f(\alpha(p,i))^{-1}$ to the right. Indeed if $h=fg$ is larger than $f$, then the embedding of $L_f$ into $L_h$ is given by replacing 
$f^{-1}$ by $g(fg)^{-1} = gh^{-1}$, expanding $g$ as $L$-linear combination of monomials, and multiplying these monomials with the original monomial $z_1^{r_1}\cdots z_k^{r_k}$.
Since the monomials of $g$ only involve non-negative exponents, this gives a well-defined linear map $T\colon L(z_1,\dots ,z_k) \to v^pQ_K(E,C)v^p$ because we have the identity 
$\alpha(p,i)^*\alpha (p,i) = v^p$, but if we write $f(\alpha (p,i))^{-1}$ on the left, we 
would obtain terms of the form 
$\alpha (p,i)\alpha (p,i)^*$ which cannot be simplified to $v^p$ in $Q_K(E,C)$. Keeping this in mind, we introduce the following definition:

\begin{definition}[Reduced fractional monomial, $p\in \Ifree$]
 \label{def:reducedmons-freecase}
{\rm Let $p$ be an element of $ \Ifree$. A {\it reduced fractional  monomial at} $p$ is a monomial in $Q_K(E,C)$ of the form 
$$(\prod_{j=1}^r \alpha (p,j)^{r_j}) f(\alpha (p,i))^{-1},$$ where $r_j\in \Z$ and $f\in \Sigma (p)$.   
The $L$-linear map $T\colon L(z_1,\dots ,z_k)\to v^pQ_K(E,C)v^p$ defined above provides a linear section of the projection map 
in Corollary \ref{cor:conseqs-rep-thm}(i)} \qed
 \end{definition}

For the case of regular primes, we need the following lemma, whose proof is inspired by the one in \cite[Proposition 1.5.11]{AAS}. See also \cite{AAJZ}. 

\begin{lemma}
 \label{basisforQ}
 Let $E$ be a row-finite graph, let $K$ be a field and set $R= P_K^{\rm rat} (E)$. For each $e\in E^1$, let $\mathcal B_e$ be a $K$-basis for $Re$, and set
 $\mathcal B_v = \bigcup_{e\in s^{-1}(v)} \mathcal B_e \bigcup \{v\}$ for $v\in E^0$. For each non-sink $v\in E^0$ select an edge $e_v\in E^1$ such that
 $s(e_v)= v$. Then the family
 $$\mathcal B '' = \{ f \eta^*\mid f\in  \mathcal B_{r(\eta)} ,\eta \in \mathrm{Path} (E) \} \setminus \{ fe_v^*\nu^* \mid f\in \mathcal B_{e_v} \}$$
 is a $K$-basis for $Q_K(E)$. 
 \end{lemma}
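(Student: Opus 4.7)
The plan is to prove spanning and linear independence separately, in the spirit of the classical basis theorem for Leavitt path algebras.

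For the spanning claim, the starting point is Proposition~\ref{prop:algebraSinAB}(ii), which provides the decomposition $S = \bigoplus_{\gamma\in\mathrm{Path}(E)} Rr(\gamma)\cdot\gamma^*$ for the algebra $S = R\langle E^*;\tau,\tilde\delta\rangle$ with $R = P_K^{\mathrm{rat}}(E)$. Combined with the structure of the sets $\mathcal B_v$, which assemble a basis of $Rv$ from the "last-step" data encoded in the $\mathcal B_e$ (for $e \in s^{-1}(v)$) together with $v$ itself, this yields that the larger family $\mathcal B' := \{f\gamma^* \mid \gamma \in \mathrm{Path}(E),\, f\in\mathcal B_{r(\gamma)}\}$ is a $K$-basis of $S$. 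Passing to the quotient $Q_K(E)=S/I^{\mathrm{Reg}(E)}$, the image of $\mathcal B'$ continues to span. I then use the defining relations $v=\sum_{e\in s^{-1}(v)}ee^*$ for each non-sink regular vertex $v$ to eliminate the distinguished terms $fe_v^*\nu^*$ with $f\in\mathcal B_{e_v}$, by rewriting each such element as a $K$-linear combination of other members of $\mathcal B'$. This is the same trick as in the proof of \cite[Proposition 1.5.11]{AAS}: the identity $e_v e_v^* = v - \sum_{e\ne e_v}ee^*$ allows one to trade a single "distinguished" monomial for a finite sum of monomials, inductively until only elements of $\mathcal B''$ remain.

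For linear independence, my plan is to exploit the identification $Q_K(E)\cong C_K^X(E)\Sigma^{-1}$ provided by Theorem~\ref{thm:properties-relativeQ}(c), where $X=\mathrm{Reg}(E)$. The relative Cohn path algebra $C_K^X(E)$ has the well-known $K$-basis of \cite[Proposition 1.5.11]{AAS}, consisting of monomials $\mu\nu^*$ in paths and ghost paths with $r(\mu)=r(\nu)$, with the distinguished "$e_v e_v^*$" terms excluded. The universal localization by $\Sigma$ intertwines the path factor $\mu$ with the passage from $P_K(E)$ to $R = P_K^{\mathrm{rat}}(E)$, and the chosen basis $\mathcal B_e$ of each $Re$ records precisely the new degrees of freedom introduced by this localization. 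Thus the linear independence of $\mathcal B''$ in $Q_K(E)$ is inherited from that of the Cohn basis in $C_K^X(E)$, via the faithfulness properties of the universal localization. A parallel route, which I would keep in reserve, is to adapt the explicit faithful-representation argument of Theorem~\ref{theor:repres} to the row-finite non-separated graph $E$, constructing a concrete $Q_K(E)$-module on which the images of the elements of $\mathcal B''$ are manifestly independent.

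The hardest step is the spanning reduction: one must check that iteratively applying CK2 at $e_v$ terminates and stays within $\mathcal B''$, since each rewriting changes the terminal vertex of the ghost path and may produce further $f'\, e_{v'}^*\,\nu'^*$ terms. This is handled by a suitable well-ordering on pairs (ghost-path length, position of the offending $e_v^*$), in which every application of the rewriting strictly decreases the parameter, so that the process terminates with an expression involving only members of $\mathcal B''$.
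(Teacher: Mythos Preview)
Your spanning argument is essentially the same as the paper's, and correct. The paper observes more simply that termination is automatic because each ghost path $\eta$ has finite length, so only finitely many substitutions of the form $e_ve_v^* = v - \sum_{e\ne e_v}ee^* - q_v$ can occur; your well-ordering works but is heavier than needed.

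The linear-independence argument, however, has a genuine gap. You propose to ``inherit'' independence of $\mathcal B''$ from the Cohn basis of $C_K^X(E)$ via the localization $C_K^X(E)\hookrightarrow C_K^X(E)\Sigma^{-1}\cong Q_K(E)$. But injectivity of this map only tells you that the Cohn basis itself---consisting of elements $\mu\nu^*$ with $\mu,\nu\in\mathrm{Path}(E)$---remains independent in $Q_K(E)$. The family $\mathcal B''$ is strictly larger: its left factors $f$ range over a basis of $Re=P_K^{\mathrm{rat}}(E)e$, not merely over paths. Universal localization is not flat in general, so there is no ``base-change'' principle letting you upgrade independence of the path monomials to independence of $\{f\eta^*\}$ with $f$ rational. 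Your sentence ``the chosen basis $\mathcal B_e$ of each $Re$ records precisely the new degrees of freedom introduced by this localization'' is a hope, not an argument.

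The paper avoids this difficulty altogether by working \emph{upstairs} in $S=R\langle E^*;\tau,\tilde\delta\rangle$, before quotienting. It exhibits an explicit $K$-basis $\mathcal B'=\{fq_v\gamma^*: f\in\mathcal B_v,\ r(\gamma)=v\}$ of the ideal $I$ (checking both spanning and independence directly inside $S$, where one already has the basis $\mathcal B$ from Proposition~\ref{prop:algebraSinAB}), and then shows that $\mathcal B'\cup\mathcal B''$ is a basis of $S$. The key observation for independence is that any nontrivial $K$-linear combination of elements of $\mathcal B'$, when expanded in the basis $\mathcal B$, contains a term of the form $g\,e_v^*\gamma^*$ with $g\in\mathcal B_{e_v}$, and such terms are exactly the ones excluded from $\mathcal B''$. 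Hence $\mathcal B''$ maps bijectively onto a basis of $Q_K(E)=S/I$. This is a short, self-contained linear-algebra argument that never appeals to properties of the localization map.
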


\begin{proof}
 Set $S= R\langle E^*; \tilde{\delta},\tau \rangle $ and $Q= Q_K(E,C)$ . We then have $Q =S/I$, where $I$ is the ideal of $S$ generated by all the idempotents
 $q_v = v - \sum _{e\in s^{-1}(v)} ee^*$ (see \cite[Proposition 2.13]{AB}).  
 We now work in $S$, and recall from \cite{AB} that each element of $S$ can be written in a unique way as a finite sum $\sum_{\gamma \in \text{Path}(E)} a_{\gamma} \gamma ^*$,
 where $a_{\gamma}\in Rr(\gamma )$. It follows that the set 
 $$\mathcal B =  \{ f \eta^*\mid f\in  \mathcal B_{r(\eta)} ,\eta \in \mathrm{Path} (E) \} $$
 is a basis for $S$. We now consider a suitable basis $\mathcal B'$ for $I$. Let
 $$\mathcal B ' = \bigcup _{v\in E^0\setminus \text{Sink}(E)} \{ f q_v \gamma ^* \mid f\in \mathcal B_v, r(\gamma ) = v \}.$$
 To show that $\mathcal B' $ is a basis of $I$, note first that they generate $I$, because 
 $$\gamma ^* q_v = 0 = q_v f$$
 if $\gamma $ has positive length and if $\epsilon_v(f) =0$.
 To show that the elements of $\mathcal B'$ are linearly independent, it suffices to consider terms corresponding to a single idempotent $q_v$.
 Suppose that $\sum _{i=1}^r \lambda _i f_i q_v \gamma _i^*= 0$ in $S$ with all $\lambda_i\in K\setminus \{ 0\}$, where $\{ (f_i,\gamma_i) \}$ are distinct elements in
 $\mathcal B_v\times \{\eta \in \text{Path}(E) : r(\eta ) = v \}$.  
 We may assume that $\gamma _1$ is of maximal length among the $\gamma_i$'s. Expanding this expression, we see that for $e\in s^{-1} (v)$ we have
 $(\sum _{i:\gamma _i=\gamma _1} \lambda_i f_i e) (\gamma_1 e)^*= 0$, which implies that   $\sum _{i:\gamma _i=\gamma _1} \lambda_i f_i e = 0$.
 Since $f_ie$ are linearly independent, we get that $\lambda _1 = 0$, a contradiction. 
 
 Hence we obtain that $\mathcal B '$ is a basis of $I$. To conclude the proof, we only have to check that $\mathcal B'\cup \mathcal B ''$ is a basis
 of $S$. Let $f\gamma ^*$ be an element in the basis $\mathcal B$, with $r(\gamma ) = v$. Since $\gamma $ has finite length, after a finite number of substitutions of the form 
 $e_ve_v^* = v- \sum_{e\in s^{-1}(v), e\ne e_v} ee^* -q_v$, we will arrive at an expression of $f\gamma^*$ as a linear combination of the elements of 
 $\mathcal B'\cup \mathcal B''$.  To show that the family $\mathcal B ' \cup \mathcal B''$ is linearly independent, note that by the above argument (i.e. considering a path $\gamma $ of highest length),
 when we consider a linear combination of different terms $fq_v \gamma ^*$ in $\mathcal B '$ and we expand it, we will obtain a basis term of the form $g e_v^*\gamma ^*$, with $g\in \mathcal B_{e_v}$.
 Therefore this linear combination cannot belong to the linear span of $\mathcal B ''$ in $S$. This shows that $\mathcal B'\cup \mathcal B ''$ is a linear basis of $S$. 
 \end{proof}

We can now define the notion of a reduced fractional monomial for a regular prime $p$. 

\begin{definition}[Reduced fractional monomial, $p\in \Ireg$]
 \label{def:redfract-regcase} {\rm Let $p\in \Ireg$.
Fix choices of edges in $E_p^1$ and $L$-basis in $P_L^{\rm rat} (E_p)e$ as in Lemma \ref{basisforQ} for the graph $E_p$, 
and consider the corresponding $L$-basis $\mathcal B''$ for $Q_L(E_p)$. This choice provides a linear section of the surjection $e(p)Q_K(E,C)e(p) \to Q_L(E_p)$ (see Corollary \ref{cor:conseqs-rep-thm}). 
A {\it reduced fractional monomial at} $p$ is a term of the form $a f\gamma ^*$, where $a\in L\setminus \{0\}$ and  $f\gamma ^*\in \mathcal B ''$, seen in $Q_K(E,C)$.}\qed
 \end{definition}

We are going to use the notion of reduced fractional monomial to prove Theorem \ref{thm:reducedmonomialsdirectsum}. Before doing that, we introduce the definition of the reduced graph, $E_{red}$, 
and use it to define the subspace $Q_{(\gamma_1,\gamma_2)}$ in Definition \ref{def:spacesQgammas}.

 \begin{definition}
 \label{def:reducedgraph}
{\rm  Let $E$ be a directed graph with associated poset $I$. The {\it reduced graph} of $E$ is the graph $\Ered$ such that $\Ered ^0= I$ and such that $\Ered ^1$ is the set of connectors $\beta $  of $E$,
 with $s_{\Ered}(\beta ) = [s(\beta)]$ and $r_{\Ered} (\beta ) = [r(\beta )]$.} \qed
 \end{definition}

 \begin{definition}
  \label{def:spacesQgammas}
 {\rm  Let $\gamma_1= \beta_1\beta _2\cdots \beta_r$ and $\gamma _2 = \beta_1'\beta_2'\cdots \beta_s'$ be paths in $\Ered$ such that $r(\gamma_1 ) = p = r(\gamma _2)$.
  We define the subspace $Q_{(\gamma_1,\gamma_2)}$ as the span of all the terms $\lambda \mathbf m (p)\nu^*$ in $Q_K(E,C)$ such that the connectors involved in 
  the fractional c-path $\lambda $ are exactly $\beta_1,\beta_2, \dots ,\beta_r$, the connectors involved in the c-path $\nu$ are exactly $\beta'_1,\beta'_2,\dots , \beta'_s$,
  and $\mathbf m(p)$ is a reduced fractional monomial at $p$.} \qed
   \end{definition}

 \begin{theorem}
 \label{thm:reducedmonomialsdirectsum}
 Let $(E,C)$ be an adaptable separated graph with associated poset $I$. Then we have
\begin{equation}
\label{eq:A22}
Q_K(E,C) = \bigoplus _{(\gamma_1,\gamma_2)\in \mathcal P} Q_{(\gamma_1,\gamma_2)}  
\end{equation}
where $\mathcal P $ is the set of pairs of finite paths $(\gamma_1,\gamma_2)$ in $\Ered$
such that $r_{\Ered}( \gamma _1) = r_{\Ered}(\gamma_2)$. 
\end{theorem}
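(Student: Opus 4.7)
My plan is to proceed by induction on the finite poset $I$, processing $Q_K(E,C)$ from the maximal primes downward and using the faithful representation of Theorem \ref{theor:repres} as the central tool. The argument splits into the two usual pieces: the right-hand side spans $Q_K(E,C)$, and the sum is direct.

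For the spanning part, I would start from Theorem \ref{thm:linearspanofQ}, which writes every element of $Q_K(E,C)$ as a finite $K$-linear combination of terms $\lambda \mathbf{m}(p)\nu^*$. I then want to reduce each such term to lie in a single $Q_{(\gamma_1,\gamma_2)}$. The fractional c-path $\lambda$ and the c-path $\nu$ already determine candidate paths $\gamma_1,\gamma_2$ in $\Ered$, obtained by reading off the sequences of connectors. The nontrivial step is to normalize the middle factor $\mathbf{m}(p)$ into a reduced fractional monomial at $p$. For $p\in \Ifree$, using that $\alpha(p,i)^*\alpha(p,i)=v^p$ and the commutation relations in Block~2, one can move every $f(\alpha(p,i))^{-1}$ to the right and collect the $\alpha(p,i)^{\pm}$'s into a single Laurent monomial, matching Definition \ref{def:reducedmons-freecase}. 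For $p\in \Ireg$, I would apply Lemma \ref{basisforQ} to $P_L^{\rm rat}(E_p)$ to rewrite $\mathbf{m}(p)=f\mu^*$ as a combination of basis elements in $\mathcal{B}''$ plus terms involving the idempotents $q_v$; using the identity $q_v=\sum_{\beta\in C\cap s_E^{-1}(v)}\beta\beta^*$ valid inside $Q_K(E,C)$, such leftover terms produce new summands in which an extra connector is appended to both $\lambda$ and $\nu$, yielding a term in some $Q_{(\gamma_1',\gamma_2')}$ with $r_\Ered(\gamma_i')$ strictly below $p$. Finitely many such applications suffice because $I$ is finite, so by descending induction on $p\in I$ every element decomposes as required.

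For directness, I would use the faithful representation $\tau_I$ of Theorem \ref{theor:repres} and isolate summands by acting with suitable ``connector operators.'' Suppose $\sum_{(\gamma_1,\gamma_2)} x_{(\gamma_1,\gamma_2)}=0$ with only finitely many nonzero terms, each $x_{(\gamma_1,\gamma_2)}\in Q_{(\gamma_1,\gamma_2)}$. Choose a pair $(\gamma_1,\gamma_2)$ of maximal total length among those with $x_{(\gamma_1,\gamma_2)}\ne 0$, and let $p:=r_\Ered(\gamma_1)=r_\Ered(\gamma_2)$. Multiplying the equation on the left by $\gamma_1^*$ (meaning the product of connector stars in reverse order, paired with appropriate dual elements from the intermediate $f^{-1}$'s) and on the right by $\gamma_2$, the key point is that in the representation each $\beta^*$ kills the complementary summand and identifies $wL\otimes V(r(\beta))$ with $V(r(\beta))$; consequently, a term $x_{(\gamma_1',\gamma_2')}$ contributes to the image only when the two connector sequences match $\gamma_1,\gamma_2$, and longer-length pairs contribute nothing by the maximality choice. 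One is thus reduced to showing that the reduced fractional monomials at $p$ are linearly independent in $e(p)Q_K(E,C)e(p)$; but this is exactly the content of Definitions \ref{def:reducedmons-freecase} and \ref{def:redfract-regcase} combined with Corollary \ref{cor:conseqs-rep-thm}, since these monomials form the image of an $L$-basis of either $L(z_1,\dots,z_{k(p)})$ or $Q_L(E_p)$ under a linear section of the quotient map modulo $\mathcal I(J)$.

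The main obstacle I anticipate is the bookkeeping in the spanning step when $p\in \Ireg$: after applying Lemma \ref{basisforQ} one has to track how the error terms in $\mathcal{B}'$ (which involve the $q_v$) get rewritten via $q_v=\sum_{\beta}\beta\beta^*$ as genuine contributions to longer pairs $(\gamma_1',\gamma_2')$ in $\Ered$, while preserving the form $\lambda \mathbf m(p') \nu^*$ with $p'<p$ in $I$. Once the finiteness of $I$ is used to terminate this recursion, both spanning and independence follow cleanly from Theorem \ref{theor:repres}.
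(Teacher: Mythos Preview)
Your spanning argument is essentially the one the paper has in mind (though the paper states it in one line, citing Theorem~\ref{thm:linearspanofQ} and Corollary~\ref{cor:conseqs-rep-thm}), and the recursion you describe via $q_v=\sum_{\beta}\beta\beta^*$ is the right mechanism.

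The directness argument, however, has a genuine gap. Choosing a pair $(\gamma_1,\gamma_2)$ of \emph{maximal total length} is the wrong extremal choice. The paper instead equips $\mathcal P$ with the partial order $(\gamma_1,\gamma_1')\succeq(\gamma_2,\gamma_2')$ iff $\gamma_1$ is a prefix of $\gamma_2$ and $\gamma_1'$ a prefix of $\gamma_2'$, and picks $(\gamma_1,\gamma_1')$ \emph{maximal for $\succeq$} (so nothing in the support is a strict prefix of it). The reason this matters: your stripping operators do \emph{not} kill a term $x_{(\gamma_1',\gamma_2')}$ whose connector word $\gamma_1'$ is a proper prefix of $\gamma_1$. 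After peeling off the connectors of $\gamma_1'$ you must apply a further $\beta_{k+1}^*$ to a reduced fractional monomial $\mathbf m(q)$ at the intermediate level $q$, and in general $\beta(q,j,s)^*\,\mathbf m(q)\ne 0$ (for instance when $r_j\le 0$ in the free case, or for generic $f\in P_L^{\rm rat}(E_q)$ in the regular case). So ``only matching connector sequences survive'' is false as stated, and maximal total length does not prevent such prefix pairs from appearing in the support.

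There is a second missing ingredient even once you switch to the prefix order. With the paper's choice of $(\gamma_1,\gamma_1')$, the carefully built operators $y_1,y_2$ (which are not just $\gamma_1^*$ and $\gamma_2$ but include the extra factors $f_j^{-1}(\alpha(p_j,i_j)^*)^{m_j}$ or $g_j\gamma_j^*$ needed to keep $y_1x_{(\gamma_1,\gamma_1')}y_2\ne 0$) still allow \emph{extensions} of $(\gamma_1,\gamma_1')$ to survive; these land in $e(p)\mathcal I(J)e(p)$. The paper disposes of them not by further stripping but by applying the projection $\pi$ of Corollary~\ref{cor:conseqs-rep-thm} onto $L(z_1,\dots,z_{k(p)})$ or $Q_L(E_p)$: the extensions go to zero there, while $y_1x_{(\gamma_1,\gamma_1')}y_2$, being a nonzero combination of reduced fractional monomials, does not. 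Your last paragraph invokes linear independence of the reduced monomials in $e(p)Q_K(E,C)e(p)$; that is true, but it does not help until you have guaranteed that after stripping the surviving expression is a combination of reduced monomials at $p$ \emph{only}, and that requires the prefix-maximality together with the final projection step.
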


\begin{proof}
 The fact that $Q_K(E,C)$ is spanned by the spaces $Q_{(\gamma_1,\gamma_2)}$ follows easily from Theorem \ref{thm:linearspanofQ} and Corollary \ref{cor:conseqs-rep-thm}, due to the 
 fact that the graph $\Ered$ is finite. 
 
 To show that the sum is direct, we follow a strategy similar to the one in the proof of \cite[Lemma 2.11]{Ara10}.  
 
Let $x\in Q_{(\gamma,\gamma')}\setminus \{ 0 \}$ for some $(\gamma,\gamma')\in \mathcal P$. 
 We write $\gamma = \beta_1 \beta_2\cdots \beta_r$ for connectors $\beta_1,\beta_2,\dots ,\beta_r$. 
 Set $p= r_{\Ered}(\gamma) =r_{\Ered}(\gamma')$. We claim  that there are $y_1,y_2\in Q_K(E,C)$ such that
 \begin{enumerate}
  \item $y_2$ is a c-path involving exactly the connectors in $\gamma'$,
  \item $y_1= y_{1r}\cdots y_{12}y_{11}$ where each $y_{1j}$ is either of the form
$\beta (p_j,i_j,s_j)^* (\alpha (p_j,i_j)^*)^{m_j} f_j^{-1}$ for $f_j\in \Sigma (p_j)$, if $s_{\Ered}(\beta_j) =p_j\in \Ifree$, where $\beta_j = \beta (p_j,i_j,s_j)$ for some $i_j,s_j$, or
$y_{1j} = \beta_j^*g_j \gamma_j ^*$, where $g_j\in P_L^{\rm rat} (E_{p_j})$ and $\gamma _j$ is a  finite path in $E_{p_j}$ if $p_j:=s_{\Ered}(\beta_j)\in \Ireg$,
\item $y_1xy_2$ is nonzero and a finite sum of reduced fractional monomials at $p$.  
 \end{enumerate}

 We indicate how to build $y_1$. The easier construction of $y_2$ is left to the reader. 
 Let $\gamma _1 = \beta_2\beta_3\cdots \beta_r$. We will build $y_{11}$ of the desired form so that $y_{11}x$ is a nonzero element
 of $Q_{(\gamma_1,\gamma')}$. This is clearly enough for our purposes. Assume first that $p_1= s_{\Ered}(\beta_1)$ belongs to $\Ifree$, and write $\beta _1 = \beta (p_1,i_1,s_1)$ for some $i_1,s_1$.  
 Then we can choose $f\in \Sigma (p_1)$ such that 
 $$fx = \sum _{l=0}^m \alpha (p_1,i_1)^l \beta (p_1,i_1,s_1) x_l ,$$
 where $x_l \in Q_{(\gamma_1,\gamma')} $ and $x_m\ne 0$. Now take $y_{11}= \beta (p_1,i_1,s_1)^* (\alpha (p_1,i_1)^*)^mf_1^{-1}$ and observe that $y_{11}x = x_m$ has the desired properties. 
 Now suppose that $s_{\Ered}(\beta_1) = p_1 \in \Ireg$. In this case we just follow the proof of the injectivity of $\tau_{J'}$ in Theorem \ref{theor:repres} for $p$ regular.
 Indeed, we take $v\in E_{p_1}^0$ such that $vx\ne 0$ and write $x= \sum _{i=1}^d a_i \beta_1x_i$ where $x_i \in Q_{(\gamma_1,\gamma')}$ are $L$-linearly independent, and $a_i\in vP_L^{\rm rat}(E_{p_1})s(\beta_1)$ for all $i$.   
 Following the above-mentioned proof, we find a finite path $\gamma $, and an invertible element $g$ in a corner of $P_L^{\rm rat}(E_{p_1})$ such that, with $y_{11}= \beta_1^* g^{-1} \gamma ^*$, we have that
 $y_{11}x$ is a nonzero element of $Q_{(\gamma_1,\gamma')}$. This completes the proof of the claim.

  Now suppose that we have a relation $\sum_{i=1}^r x_{(\gamma_i,\gamma_i')}  = 0$, with each $x_{(\gamma_i,\gamma'_i)}\in Q_{(\gamma_i,\gamma_i')}\setminus \{ 0\}$.
 Consider the following partial order on $\mathcal P$:
say that  $(\gamma _1,\gamma _1')\succeq (\gamma _2,\gamma
_2')$ if $\gamma _2=\gamma _1\gamma _3$ and $\gamma
_2'=\gamma_1'\gamma _3'$ for some paths $\gamma _3,\gamma _3'$ in
$\mathcal P$. We may assume that
$(\gamma_1,\gamma _1')$ is maximal with respect to $\succeq$
(among the pairs $(\gamma _i,\gamma _i')$). Let $y_1,y_2$ be the terms 
build in the above paragraph corresponding to the term $x_{(\gamma_1,\gamma_1')}$. 
Then $y_1x_{(\gamma_1, \gamma_1')}y_2$ is nonzero, and a finite sum of reduced fractional monomials at the strongly connected component 
of $r(\gamma _1)$. We set $p=[r(\gamma_1)]$. By the form of the elements $y_1$ and $y_2$ and the maximality of $(\gamma_1, \gamma_1')$, we see that the only terms $(\gamma_i,\gamma_i')$
such that $y_1x_{(\gamma_i,\gamma_i')}y_2$ might be nonzero are precisely those such that $(\gamma _1,\gamma_1') \succeq (\gamma_i,\gamma_i')$.
Therefore we see that $y_1xy_2\in e(p) Q_K(E,C) e(p)$. Assume first that $p\in \Ifree$. Then by Corollary \ref{cor:conseqs-rep-thm}(i) there is a natural surjective homomorphism 
$\pi \colon e(p)Q_K(E,C) e(p) \to L(z_1,\dots , z_{k(p)})$ and by the definition of the reduced fractional monomials, we see that  
$$0 = \pi (y_1(\sum_{i=1}^r x_{(\gamma_i,\gamma_i')})y_2) = \pi (y_1x_{(\gamma_1,\gamma_1')}y_2) \ne 0$$
which is a contradiction. The same argument, using in this case the surjection of Corollary \ref{cor:conseqs-rep-thm}(ii), gives a contradiction in the case where $p\in \Ireg$.
This concludes the proof.
\end{proof}

We are now ready to present a key result, which will be needed later. Recall that, for a lower subset $J$ of $I$,  $\mathcal I(J)$ stands for the $K$-linear span of terms of the form $\lambda{\bf m}(p)\nu^*$, with $p\in J$.

\begin{proposition}
\label{prop:injective-map-lattice ideals}
With the above notation, the map $\mathcal I \colon \mathcal L (I) \to \mathcal L (Q_K(E,C))$, $J\mapsto \mathcal I (J)$, is an injective lattice homomorphism. 
\end{proposition}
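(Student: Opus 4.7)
My plan is to prove this as an almost immediate consequence of the direct sum decomposition established in Theorem \ref{thm:reducedmonomialsdirectsum}. The key intermediate step is to rewrite $\mathcal I(J)$ in terms of the summands $Q_{(\gamma_1,\gamma_2)}$: I claim that
$$\mathcal I(J) \;=\; \bigoplus_{(\gamma_1,\gamma_2)\in \mathcal P,\ r_{\Ered}(\gamma_1)\in J} Q_{(\gamma_1,\gamma_2)}.$$
The inclusion $\subseteq$ will follow because any spanning term $\lambda \mathbf m(p)\nu^*$ of $\mathcal I(J)$ with $p\in J$ can, via Corollary \ref{cor:conseqs-rep-thm} and the choice of sections used to define reduced fractional monomials, be expanded as a finite linear combination of elements $\lambda' \mathbf m_{\mathrm{red}}(p) \nu'^*$ lying in spaces $Q_{(\gamma_1,\gamma_2)}$ with $r(\gamma_1)=r(\gamma_2)=p\in J$. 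The reverse inclusion $\supseteq$ is immediate from the definitions of $\mathcal I(J)$ and $Q_{(\gamma_1,\gamma_2)}$.

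With this characterization in hand, both the lattice-homomorphism property and the injectivity will drop out. For the meet, the direct sum decomposition gives
$$\mathcal I(J_1)\cap \mathcal I(J_2) \;=\; \bigoplus_{r_{\Ered}(\gamma_1)\in J_1\cap J_2} Q_{(\gamma_1,\gamma_2)} \;=\; \mathcal I(J_1\cap J_2),$$
while for the join,
$$\mathcal I(J_1)+\mathcal I(J_2) \;=\; \bigoplus_{r_{\Ered}(\gamma_1)\in J_1\cup J_2} Q_{(\gamma_1,\gamma_2)} \;=\; \mathcal I(J_1\cup J_2).$$
The second equality in each line uses that $J_1\cap J_2$ and $J_1\cup J_2$ are again lower subsets of $I$, so the characterization applies to them.

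For injectivity, suppose $J_1\neq J_2$ and fix $p\in J_1\setminus J_2$. Pick any vertex $v\in E_p^0$; then $v$ is a reduced fractional monomial at $p$ (with trivial fractional c-path on both sides), so $v\in Q_{(p,p)}$ where $(p,p)$ denotes the pair of trivial paths at $p\in \Ered^0$. Since $p\in J_1$, we have $v\in \mathcal I(J_1)$; since $p\notin J_2$, the direct sum decomposition forces $v\notin \mathcal I(J_2)$. Hence $\mathcal I(J_1)\neq \mathcal I(J_2)$.

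The main subtle point—and the only step that is more than bookkeeping—is the inclusion $\mathcal I(J)\subseteq \bigoplus_{r(\gamma_1)\in J} Q_{(\gamma_1,\gamma_2)}$, because a general fractional monomial $\mathbf m(p)$ in the spanning description of $\mathcal I(J)$ need not be reduced. I expect to handle this by applying the linear sections of the projections in Corollary \ref{cor:conseqs-rep-thm}(i)--(ii) at the vertex $p$ to rewrite $\mathbf m(p)$ modulo the ideal $\mathcal I(\{q\in I: q<p\})$, and then iterating on the (finite) poset $I$; the correction terms live in $\mathcal I(J')$ for some $J'\subsetneq J\cap I{\downarrow}p$, so an inductive argument on the height of $p$ in $I$ closes this step.
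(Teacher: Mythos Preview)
Your proposal is correct and follows essentially the same approach as the paper: both derive the key decomposition $\mathcal I(J)=\bigoplus_{r(\gamma_1)\in J} Q_{(\gamma_1,\gamma_2)}$ from Theorem~\ref{thm:reducedmonomialsdirectsum} and Corollary~\ref{cor:conseqs-rep-thm}, and read off injectivity and the meet identity from it. The only cosmetic difference is that the paper observes $\mathcal I(J_1\cup J_2)=\mathcal I(J_1)+\mathcal I(J_2)$ directly from the definition of $\mathcal I(J)$ as a span (no decomposition needed), whereas you also route the join through the decomposition; both are fine.
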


\begin{proof}
For $J\in \mathcal L (I)$, Theorem \ref{thm:reducedmonomialsdirectsum} gives the following decomposition:
\begin{equation}
 \label{eq:decomposition-of-IJ}
\mathcal I (J) = \bigoplus _{(\gamma_1,\gamma_2)\in \mathcal P : \, r(\gamma_1)=r(\gamma_2) \in J} Q_{(\gamma_1,\gamma_2)}.
\end{equation}
The injectivity of $\mathcal I$ follows easily from this.  
 
It is quite easy to show directly that $\mathcal I (J_1 \cup J_2) = \mathcal I (J_1) + \mathcal I (J_2)$ for $J_1,J_2\in \mathcal L (I)$. The inclusion $\mathcal I (J_1\cap J_2) \subseteq \mathcal I (J_1) \cap \mathcal I (J_2)$
is obvious. 
To show the remaining inclusion $\mathcal I (J_1) \cap \mathcal I (J_2)\subseteq \mathcal I (J_1 \cap J_2) $, we use the formula \eqref{eq:decomposition-of-IJ}.
We thus obtain that $\mathcal I$ is an injective lattice homomorphism, as desired.
\end{proof}

\section{A cover map}
\label{sect:covermap}

After settling all tools we need for our study, in this section we explain the first step in our strategy to prove our main result. 

Throughout this section, $(E,C)$ will denote an adaptable separated graph. The main goal of this section is to build a certain
adaptable separated graph  $(\tilde E,\tilde C)$
such that the associated poset $\tilde I$ is a forest, and a surjective morphism
$\phi \colon (\tilde E, \tilde C) \to (E,C)$. This new separated graph $(\tilde E, \tilde C)$ will satisfy the following key condition:

\medskip

\noindent {\it Condition {\rm (F)}: Let $(\tilde I , \le )$ be the partially ordered set associated to the pre-ordered set $(\tilde E^0,\leq)$ (see Definition \ref{def:Idefined}(2)).
If $[v]\in \tilde I$ is not a maximal element in $\tilde I$, then there is a unique element $[w]\in
\tilde I\setminus \{ [v] \}$ such that the vertices in the strongly connected component of $w$ emit arrows to the
vertices in the strongly connected component of $v$. Moreover, if $[w]\in \tilde I_{{\rm free}}$, then there is a unique
$X\in \tilde C_w$ such that all the edges from $w$ to $[v]$ belong to $X$.
Specifically there are $v'\in [v]$
and $w' \in [w]$ and $e\in E^1$ such that $s(e)=w'$ and $r(e)=v'$,
and $r(f)\notin [v]$ if $f\in E^1$ and $s(f)\notin ([w]\cup [v])$.
Moreover, if $w\in \tilde I_{{\rm free}}$, then there is a unique
$X\in \tilde C_w$ such that
$$ s_{\tilde E}^{-1}(w)\cap r_{\tilde E}^{-1} ([v])\subseteq  X .$$
(Recall that $[w]=\{ w \}$ if $w\in \tilde{I}_{{\rm free}}$.) The strongly connected component $[w]$ will be called the {\it predecessor component} of $[v]$, and if $w$ is  free, then the element $X\in \tilde C_w$ will be called the
{\it predecessor part} of $[v]$.}

\medskip

We then obtain that the poset $\tilde I$ associated to $(\tilde E, \tilde C)$ is a forest, as follows.

\begin{lemma}
 \label{lem:tildeEforest}
 Let $(\tilde E,\tilde C)$ be an adaptable separated graph. If $(\tilde E, \tilde  C)$ satisfies condition {\rm (F)}, then $\tilde I$ is a forest.
\end{lemma}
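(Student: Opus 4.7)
The plan is to exploit condition (F) to show that every non-maximal element of $\tilde I$ has a unique immediate successor, from which the forest structure is nearly automatic.

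First I would fix a non-maximal $[v]\in\tilde I$, let $[w]$ be its predecessor component given by condition (F), and show that $[w]$ is the minimum of $\{[u]\in\tilde I:[u]>[v]\}$. Given any such $[u]$, pick $u\in[u]$ and $v\in[v]$ joined by a directed path $u=x_0\to x_1\to\cdots\to x_n=v$ in $\tilde E$, and let $k$ be the largest index with $x_k\notin [v]$ (which exists since $[u]\ne [v]$). Then $x_k\to x_{k+1}$ is an edge from $[x_k]$ into $[v]$, so condition (F) forces $[x_k]=[w]$. Since $x_k$ is reached from $u$ along a directed path, $[x_k]\le[u]$, hence $[w]\le[u]$. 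In particular $[w]$ is the unique cover of $[v]$ in $\tilde I$.

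Iterating this unique-cover operation from any $[v]\in\tilde I$ produces a strictly increasing chain $[v]<[v]^+<[v]^{++}<\cdots$ which, since $\tilde I$ is finite, terminates at a unique maximal element $r([v])$. I would then group elements by their root, setting $\tilde I_m:=\{[v]\in\tilde I:r([v])=m\}$ for each maximal $m\in\tilde I$, and verify that each $\tilde I_m$ is a tree with root $m$ in the sense of Definition \ref{poset-be-a-tree}: for $[v]\in\tilde I_m$ the interval $[[v],m]$ coincides with the iterated cover chain $[v]<[v]^+<\cdots<m$, which is totally ordered by construction.

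Finally I would check that elements of distinct $\tilde I_m$ and $\tilde I_{m'}$ are pairwise incomparable: if $[v]\in\tilde I_m$ were comparable to $[v']\in\tilde I_{m'}$, the smaller of the two would lie in the iterated cover chain of the larger by the uniqueness of covers, forcing $m=m'$. This yields $\tilde I=\bigsqcup_{m\in\mathrm{Max}(\tilde I)}\tilde I_m$ and exhibits $\tilde I$ as a forest. The only delicate step is the first one, where a directed path of arbitrary length must be reduced to a single connector edge so that condition (F), which is stated only for edges, can be applied; everything else is bookkeeping based on the finiteness of $\tilde I$.
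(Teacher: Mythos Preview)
Your proof is correct and follows essentially the same strategy as the paper's: both arguments use condition (F) to show that the predecessor component $[w]$ of a non-maximal $[v]$ is the unique element covering $[v]$, by tracing a directed path down to $[v]$ and locating the edge that first enters $[v]$. The paper packages this as ``every interval $[[v],[v_0]]$ with $[v_0]$ maximal coincides with a fixed maximal chain'', while you phrase it as ``iterated unique covers give a chain to a unique root and the resulting root-fibres are the trees''; these are equivalent formulations, and your only slip is in the final paragraph where it is the \emph{larger} element that lies in the iterated cover chain of the \emph{smaller}, not the other way around.
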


\begin{proof}
 Let $[v]\in \tilde I$ be a non-maximal element. Let $[v_0]>[v_1]>\cdots >
[v_r] =[v]$ be a maximal chain, so that $[v_0]$ is a maximal element in $\tilde I$ and each $[v_{i+1}]$ belongs to
the lower cover of $[v_i]$ for each $i=0,1,\dots , r-1$. (This maximal chain exists because the poset $\tilde I$ is finite.)
We claim that $[v_i]$ is the predecessor
of $[v_{i+1}]$. Indeed, let $\gamma =
e_1e_2\cdots e_m $ be a path starting at $v_i$ and ending at
$v_{i+1}$. There exists $1\le t \le m$ such that $[s(e_t)] >
[r(e_{t+1})]= [v_{i+1}]$. Then, by condition {\rm (F)}, $[s(e_t)]$ is the predecessor of $[v_{i+1}]$. Moreover $[v_i]\ge
[s(e_t)]>[v_{i+1}]$, and since $[v_{i+1}]$ is in the lower cover of
$[v_i]$, we conclude that $[v_i]=[s(e_t)]$, so that $[v_i]$ is the
predecessor of $[v_{i+1}]$.

Now suppose that $[w] > [v]$, and let $[w]> [u_1] > \cdots > [u_s]=
[v]$ be a maximal chain between $[w]$ and $[v]$. Just as before, we obtain that $[u_{s-1}]$ is the predecessor of $[v]$, and thus $[u_{s-1}]=[v_{r-1}]$.
Using induction, we obtain that $r\ge s$ and that $[w]= [v_{r-s}]$. Thus every element in the interval $[[v], [v_0]]$ must
be one of the elements in the chain $[v_0]>[v_1]>\cdots > [v_r]
=[v]$. This shows that $\tilde I$ is a forest, whose trees are the posets of the form $\tilde I\downarrow i_0$, where $i_0$ is a maximal element of  $\tilde I$.
 \end{proof}

Notice that the surjective morphism $\phi \colon (\tilde E, \tilde C) \to (E,C)$ is not a morphism in the category $\SGr$ as defined in \cite{AG12}. 
It rather resembles a cover from topology, and thus we have adopted this term to our context
in the next definition.

\begin{definition}
 \label{def:covermorphism}
 Let $(E,C)$ and $(F,D)$ be two adaptable separated graphs. A cover $\phi \colon (F,D) \to (E,C)$ is a graph homomorphism $\phi = (\phi^0,\phi^1)\colon E\to F$ such that the following conditions hold:
 \begin{enumerate}
  \item $\phi ^0$ and $\phi^1 $ are surjective.
  \item For each $v\in F^0$, the map $\phi^1$ induces a bijection $\phi^1_v\colon s_F^{-1} (v) \to s^{-1}_E (\phi^0(v))$ such that $\phi ^1_v(X)\in C_{\phi^0(v)}$ for each $X\in D_v$. In particular, $\phi^1$ induces a bijection
  $X\mapsto \phi ^1(X)$ from $D_v$ onto $C_{\phi^0 (v)}$.
 \end{enumerate}\qed
\end{definition}

Observe that covers are stable under composition, that is, if $\phi
\colon (F_1,D_1) \to (F,D)$ and $\psi \colon (F,D) \to (E,C)$ are
two covers, then $\psi \circ \phi \colon (F_1,D_1) \to (E,C)$ is
also a cover.

Let $(E,C)$ be an adaptable separated graph, with corresponding
poset $I$. If $J$ is a lower subset of $I$, the restriction graph
$E_J$ has a natural structure of separated graph $(E_J,C_J)$, under
which is clearly adaptable. When $J= I\downarrow [v]$ for a vertex
$v\in E^0$, we will denote by $T(v)$ the separated graph
$(E_J,C_J)$, and we will say that $T(v)$ is the {\it tree} of $v$.
Of course, the vertex set of $T(v)$, denoted by $T^0(v)$, is the set
of all $w\in E^0$ such that $v\ge w$. We will also need to consider
the {\it strict tree} of $v$, which is the separated graph $\tilde T
(v)$ obtained by restricting $(E,C)$ to the lower subset $J' = \{
[w] \in E^0 \mid [w] < [v] \}$ of $I$.

We need a last piece of notation. Let $(E,C)$ and $I$ be as above.
For $[v]\in \Ireg$, set
$$X_{[v]} = \{ e\in E^1 \mid s(e)\in [v] \}= s^{-1}([v]).$$ That is, $X_{[v]}$ is the set of arrows departing
from the strongly connected component $[v]$. 
Now write
$$\ol{C} =  \Big( \bigsqcup_{v\in I_{{\rm free}}} C_v\Big) \sqcup \{ X_{[v]} \mid [v] \in \Ireg \}.$$
Set $\ol{C}_v = C_v$ if $v\in \Ifree$ and $\ol{C}_v= \{ X_{[v]}  \}$
if $v\in \Ireg$. Now observe that $(E,C)$ satisfies condition {\rm (F)} if
and only if for each $v\in E^0$ there is at most one $X\in
\ol{C}\setminus \ol{C}_v$ such that $r(X)\cap [v] \ne \emptyset$.

\begin{theorem}
 \label{thm:covering-theorem}
 Let $(E,C)$ be an adaptable separated graph. Then there exists an adaptable separated graph $(\tilde E, \tilde C)$ satisfying condition {\rm (F)} and a cover morphism
 $\phi \colon (\tilde E, \tilde C) \to (E,C)$.
 \end{theorem}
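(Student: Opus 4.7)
I would construct $(\tilde E,\tilde C)$ by recursively ``unfolding'' the finite poset $I=E^0/{\sim}$ so that each strongly connected component of the cover receives arrows from at most one predecessor part. Processing $I$ in descending order, for each $p\in I$ I produce a finite indexing set $\mathcal N_p$ of copies. Set $\mathcal N_p:=\{\ast\}$ for $p$ maximal in $I$. For $p$ non-maximal, let
\[
\mathrm{In}(p):=\{(q,X) : q\in I,\ q>p,\ X\in\ol C_w \text{ for some } w\in E_q^0,\ r(X)\cap E_p^0\neq\emptyset\},
\]
and put $\mathcal N_p:=\{(\xi,q,X) : (q,X)\in\mathrm{In}(p),\ \xi\in\mathcal N_q\}$. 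An easy induction shows each $\mathcal N_p$ is finite and non-empty.

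Next I would define
\[
\tilde E^0:=\{(v,\xi) : v\in E^0,\ \xi\in\mathcal N_{[v]}\},\qquad \tilde E^1:=\{(e,\xi) : e\in E^1,\ \xi\in\mathcal N_{[s(e)]}\},
\]
with $s(e,\xi):=(s(e),\xi)$; set $r(e,\xi):=(r(e),\xi)$ when $e$ is internal to $E_{[s(e)]}$, and $r(e,\xi):=(r(e),(\xi,[s(e)],X))$ when $e$ is a connector, where $X$ is the unique element of $\ol C_{s(e)}$ containing $e$ (the index $(\xi,[s(e)],X)$ belongs to $\mathcal N_{[r(e)]}$ by construction). The partition is the pullback $\tilde C_{(u,\xi)}:=\{\{(e,\xi) : e\in Y\} : Y\in C_u\}$, and $\phi$ is the coordinate projection $(v,\xi)\mapsto v$, $(e,\xi)\mapsto e$. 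Surjectivity of $\phi^0,\phi^1$ and the local bijection $s_E^{-1}(u)\to s_{\tilde E}^{-1}(u,\xi)$ respecting partitions are immediate, so $\phi$ is a cover in the sense of Definition \ref{def:covermorphism}.

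The crucial structural claim is that the strongly connected components of $\tilde E$ are exactly the subgraphs $\tilde E_{(p,\xi)}$ with vertex set $\{(v,\xi): v\in E_p^0\}$ and the corresponding lifted edges. Projecting via $\phi$ shows that any strong component of $\tilde E$ sits inside a single $\tilde E_{(p,\xi)}$; conversely internal edges preserve $\xi$, so $\tilde E_{(p,\xi)}$ is strongly connected because $E_p$ is. With the partition $\tilde I=\tilde I_{\mathrm{free}}\sqcup\tilde I_{\mathrm{reg}}$ lifted from $I$ and the choice of $\tilde E_{(p,\xi)}$, each of the three conditions of Definition \ref{def:adaptable-sepgraphs} is inherited vertex-by-vertex from $(E,C)$; in particular $(p,\xi)$ is minimal in $\tilde I$ iff $p$ is minimal in $I$. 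Condition (F) is then built into the recursion: for non-maximal $(p,\xi)$ the decomposition $\xi=(\xi_0,q,X)$ is unique, and by the range rule any arrow of $\tilde E$ with target in $\tilde E_{(p,\xi)}$ and source outside it must arise from some $e\in X$ with $r(e)\in E_p^0$ lifted at $\xi_0\in\mathcal N_q$; hence $(q,\xi_0)$ is the unique predecessor component, and when $q\in\Ifree$ the lift of $X$ is the unique part at $(v^q,\xi_0)$ containing those arrows.

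The main obstacle is the structural claim about strong components: it rests on the observation that any connector strictly decreases the $I$-coordinate and alters $\xi$, so a path that leaves $\tilde E_{(p,\xi)}$ can never return, and copies indexed by distinct $\xi$'s are never merged. Once this is in hand, adaptability and condition (F) drop out from the recursive definition.
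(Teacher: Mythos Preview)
Your construction is correct and produces the same cover as the paper's, but the two proofs are organized in opposite directions. The paper argues by \emph{bottom-up order-induction}: starting with $(F,D)=(E,C)$ and a growing lower set $J\subseteq I$, it picks a minimal $[v]\in I\setminus J$, counts the parts $X_1,\dots,X_r\in\ol{D}\setminus\ol{D}_{\bar v}$ meeting $[\bar v]$, and replaces the whole tree $T(\bar v)$ by $r$ disjoint copies, redirecting each $X_j$ into the $j$-th copy. You instead give a \emph{top-down closed formula}: the recursion $\mathcal N_p=\{(\xi,q,X):(q,X)\in\mathrm{In}(p),\ \xi\in\mathcal N_q\}$ records, for each component, the unique sequence of predecessor parts a copy has acquired, and the range rule $r(e,\xi)=(r(e),(\xi,[s(e)],X))$ makes condition {\rm (F)} automatic. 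The structural claim you flag---that strong components of $\tilde E$ are exactly the $\tilde E_{(p,\xi)}$---is indeed the only nontrivial point; your argument (connectors strictly lower the $I$-coordinate, internal edges fix $\xi$) is the right one and is essentially Lemma~\ref{lem:tildeEforest} in disguise. Your approach is more explicit and arguably cleaner as a standalone proof of the theorem. What the paper's inductive version buys is the chain of intermediate adaptable separated graphs connecting $(\tilde E,\tilde C)$ to $(E,C)$ one duplication at a time: this is exactly the sequence of \emph{crowned pairs} exploited in Section~\ref{sec:Pushouts} (see Remark~\ref{rem:crownedremark}) to transfer the realization result from $(\tilde E,\tilde C)$ back to $(E,C)$, so for the global strategy the paper's formulation is not merely a matter of taste.
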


 \begin{proof}
  We proceed by order-induction. Let $J$ be a lower subset of the poset $I$, where $(I,\le )$ is the partially ordered set associated to the pre-ordered set $(E^0,\leq)$. 
  Suppose that we have built an adaptable separated graph $(F,D)$ and a cover morphism $\psi \colon (F,D) \to (E,C)$ satisfying the following properties:
  \begin{enumerate}
   \item For each $v\in  (\psi^0)^{-1}(E_J^0)$ there is at most one
   $X\in \ol{D}\setminus \ol{D}_v$ such that $r(X)\cap [v] \ne
   \emptyset $.
   \item For each $v\in F^0\setminus (\psi^0)^{-1}(E_J^0)$ we have
   $(\psi^0)^{-1}(\{ \psi^0(v) \}) = \{ v \}$.
  \end{enumerate}
If $J=I$ then we have obtained the desired cover map. If $J\ne I$
then we select a minimal element $[v]$ in $I\setminus J$. (Note that
to start the induction we can take $J=\emptyset$.) We set $J'= J\cup
\{ [v] \}$, and let $\ol{v}\in F^0$ be the unique vertex such that
$\psi^0(\ol{v}) = v$ (use condition (2)). We will define an
adaptable separated graph $(F',D')$ and a suitable cover morphism
$\psi ' \colon (F',D') \to (F,D)$. Let $\{ X_1, X_2, \dots , X_r \}$
be the set of those $X\in \ol{D}\setminus \ol{D}_{\ol{v}}$ such that
$r(X)\cap [\ol{v}] \ne \emptyset $. Let $T_1(\ol v), T_2(\ol v),
\dots , T_r(\ol v)$ be a family of mutually disjoint separated
graphs isomorphic to the tree $T(\ol v)$ of $\ol v$, and let
$\sigma_j \colon T_j(\ol v) \to T(\ol v)$ be isomorphisms of
separated graphs, for $j=1,\dots , r$.

Set
$$(F')^0= (F^0 \setminus T^0(\ol v)) \sqcup \Big( \bigsqcup_{j=1}^r
T_j^0(\ol v) \Big) .$$ We now observe that if $e\in F^1\setminus
T^1(\ol v)$ and $r(e)\in T^0(\ol v)$ then necessarily $e\in X_j$ for
some $j$. This is clear by definition of the sets $X_j$ in case
$r(e) \in [\ol v]$. If $r(e) \in T^0(\ol v) \setminus [\ol v]$, then
by minimality of $[v]$ in $I\setminus J$, we have $r(e) \in
(\psi^0)^{-1} (E_J^0)$, so by condition (1) there exists at most one
$X\in \ol{D} \setminus \ol{D}_{r(e)}$ such that $r(X) \cap [r(e)]
\ne \emptyset $. But $r(e) \in T^0(\ol v)\setminus [\ol v]$ and thus
it follows that $X\in \ol{D}_w$ for some $w\in T^0(\ol v)$ and thus
$e\in X\subseteq T^1(\ol v)$, a contradiction. This shows our claim.

Now we define sets of arrows $X_1', X_2',\dots , X_r'$ in our new
graph $F'$. The sets $X_j'$ are in bijection with the sets $X_j$
through a map $e' \longleftrightarrow e$, for $e\in X_j$. For $e\in X_j$, define
$s_{F'}(e') = s_F(e) \in F^0\setminus T^0(\ol v)$ and
$$ r_{F'}(e') =
\begin{cases} r_F(e)\in F^0\setminus T^0(\ol v)  &
\text{if\ } r_F(e) \notin [\ol v] \\
\sigma_j^{-1} (r_F(e))\in T_j^0(\ol v) & \text{if\ } r_F(e) \in [\ol v]
\end{cases} ,$$
where one needs to use the above argument to show that, for $e\in X_j$, $r_F(e) \in F^0\setminus T^0(\ol v)$ if $r_F(e)\notin [\ol v]$.
With this, we can define
the set $(F')^1$ as follows:
$$(F')^1= \Big( F^1\setminus (T^1(\ol v)\sqcup \bigsqcup _{j=1}^r X_j ) \Big)\sqcup \Big(
\bigsqcup_{j=1}^r T_j^1(\ol{v}) \sqcup \bigsqcup_{j=1}^r X'_j \Big).$$ By
the above observation, we have that $r_F(e) \in F^0\setminus T^0(\ol
v)$ for $e\in F^1\setminus (T^1(\ol v)\sqcup \bigsqcup _{j=1}^r
X_j)$, so we can define $s_{F'}(e)= s_F(e)$ and $r_{F'}(e)=r_F(e)$
for these edges $e$.

We now define $D'$. For $w\in \bigsqcup_{j=1}^r T_j^0(\ol v)$, the
set $D'_w$ is the set induced by the structure of separated graph of
$T_j(\ol v)$. If $w\in F^0\setminus T^0(\ol v)$ is free, we simply take the
elements from $D_w$ which are not in the set $\{X_1,\dots X_r \}$,
and we replace the sets $X_j$ such that $X_j\in D_w$ with the
corresponding sets $X_j'$. If $w\in F^0\setminus T^0(\ol v)$ is regular, then $w$ will also be regular in $F'$ and
$D'_w = s_{F'}^{-1}(w)$.

Define $\psi' \colon (F',D') \to (F,D) $ by $(\psi ')^0 (w)= w$ if $w\in F^0\setminus T^0(\ol v)$ and $(\psi ')^0(w) = \sigma_j (w)$ for
$w\in T_j^0(\ol v)$, and
$$(\psi ')^1 (f) =  \begin{cases}
                     f  &
\text{if\ } f\in F^1\setminus (T^1(\ol v)\sqcup \bigsqcup _{j=1}^r X_j) \\
\sigma_j (f) &
\text{if\ } f\in T_j^1(\ol v) \\
e  &
\text{if\ } f= e' \text{ for\ } e\in \bigsqcup_{j=1}^r X_j
 \end{cases}.$$
 Then $\psi'$ is clearly a cover morphism, and thus $\psi \circ \psi' \colon (F',D')\to (E,C)$ is a cover morphism. By construction, the map $\psi \circ \psi '$
 satisfies properties (1) and (2) with respect to $J'=J\cup\{v\}$. This completes the induction step.
\end{proof}

\begin{definition}
\label{def:auxiliarygraph}
{\rm Let $(E,C)$ be an adaptable separated graph. The adaptable separated graph $(\tilde E,\tilde C)$ build in Theorem \ref{thm:covering-theorem} will be called an {\it auxiliary separated graph} of $(E,C)$.}\qed
\end{definition}

\section{Adaptable separated graphs with condition {\rm (F)} }\label{sec:BuildingBlocks}

This section is the milestone of the current paper. Here we study the realization problem for the adaptable separated graphs satisfying condition {\rm (F)}. In particular, the main result obtained in this part is the following:

\begin{theorem}
 \label{thm:main-for-F} Let $(E,C)$  be  an adaptable separated graph satisfying condition {\rm (F)} and such that the associated poset $I$ has a largest element
$i_0= [v_0]$. Then $Q_K(E,C)$ is a separative von Neumann regular ring and the natural map $M(E,C)\to \mathcal V (Q_K(E,C))$ is a monoid isomorphism.
 \end{theorem}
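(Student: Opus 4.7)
The plan is to prove Theorem \ref{thm:main-for-F} by induction on $|I|$, exploiting the fact that condition (F) together with the existence of a largest element $i_0=[v_0]$ forces $I$ to be a tree rooted at $i_0$ (Lemma \ref{lem:tildeEforest}). The base case $|I|=1$ is a non-separated graph: either a single free vertex with no outgoing edges (trivial) or a strongly connected row-finite regular graph, where the statement reduces to the main theorem of \cite{AB} applied to the regular algebra $Q_L(E)$ over the extended field $L=K(t_1,t_2,\dots)$, together with the observation that adjoining the central invertibles $t_i^v$ does not disturb the $\mathcal V$-monoid.

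For the inductive step, I would first isolate the building blocks. Fix the root: if $v_0\in\Ifree$, write $C_{v_0}=\{X_1,\dots,X_{k(i_0)}\}$; if $v_0\in\Ireg$, set $X_{[v_0]}=s^{-1}([v_0])$. By condition (F), each strongly connected component $[w]$ in the strict tree $\tilde T(v_0)$ has a unique predecessor part emitting into it, so the choice of an $X_j$ at the root induces a canonical subgraph $(E^{(j)},C^{(j)})$ whose associated poset is strictly smaller than $I$. The key structural claim is that $(E,C)$ is the iterated pullback (amalgamation) of the $(E^{(j)},C^{(j)})$ along the common part $\tilde T(v_0)$. At the monoid level, this translates into $M(E,C)$ being a pullback of the $M(E^{(j)},C^{(j)})$ over $M(\tilde T(v_0))$; at the algebra level, Theorem \ref{thm:reducedmonomialsdirectsum} together with the explicit control over reduced fractional monomials coming from Corollary \ref{cor:conseqs-rep-thm} should yield that $Q_K(E,C)$ is the corresponding pullback of the $Q_K(E^{(j)},C^{(j)})$ over $Q_K(\tilde T(v_0))$. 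The universal property of the latter pullback follows from the presentation of $\mathcal S_K(E,C)$ combined with the localization-commutes-with-colimits style argument embodied in the proof of Theorem \ref{thm:linearspanofQ}, while the former is a direct verification from the presentation of $M(E,C)$.

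Once both pullbacks are in place, the main obstacle is that the functor $\mathcal V$ does not preserve pullbacks in general. I would invoke the $K_1$-criterion from \cite{Ara10}: given a pullback square of (sufficiently nice) rings with at least one surjective leg, $\mathcal V$ commutes with the pullback precisely when the induced map $K_1(R_1)\oplus K_1(R_2)\to K_1(S)$ is surjective. This is exactly the content to be verified in Proposition \ref{prop:conditionforK1}. The verification should use the abundance of invertibles $t_i^v$ built into $\mathcal S_K^1(E,C)$: the quotient $Q_K(\tilde T(v_0))$ has $K_1$ generated (up to what we need) by classes already lifting to the corners of the building blocks, because the extension of scalars to $L=K(t_1,t_2,\dots)$ ensures that the explicit presentation of the quotients described in Corollary \ref{cor:conseqs-rep-thm} makes every required $K_1$-class visible as an image of units from either $Q_K(E^{(j)},C^{(j)})$.

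The hard part will be precisely the bookkeeping in Step 2 (organizing the pullback of separated graphs into simultaneous pullbacks at the algebraic and monoid levels), and Step 3 (the $K_1$-verification). For the former, the direct sum decomposition of Theorem \ref{thm:reducedmonomialsdirectsum} indexed by pairs of paths in $E_{\mathrm{red}}$ is indispensable, since it provides a combinatorial bookkeeping device aligned with the tree structure. For the latter, one reduces to checking surjectivity onto $K_1$ of the field $L(z_1,\dots,z_{k(p)})$ or of $Q_L(E_p)$ (the two options from Corollary \ref{cor:conseqs-rep-thm}); in both cases the relevant units are available in the corners of $Q_K(E,C)$ since $L\subset v^pQ_K(E,C)v^p$ via the natural embedding, and the Leavitt path algebra $K_1$-computation of \cite{A18} supplies the regular case. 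With both pullbacks and the $K_1$-condition secured, pulling back the building-block isomorphisms $M(E_i)\cong \mathcal V(Q_K(E_i))$ (valid by \cite{AB,A18}) yields the desired $M(E,C)\cong \mathcal V(Q_K(E,C))$, while separativity and von Neumann regularity are transferred from the building blocks through the pullback as in \cite[\S4]{AGOP} and \cite[Theorem 2.10]{Ara10}.
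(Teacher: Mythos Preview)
Your overall architecture---induction via pullbacks, together with the $K_1$-criterion of \cite{Ara10} to force $\mathcal V$ to commute with the pullback---is the same as the paper's, but both the induction scheme and the pullback structure you describe are wrong.

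First, splitting at the root $v_0$ and inducting on $|I|$ does not always make progress: if $v_0\in\Ireg$ then $|C_{v_0}|=1$ by Definition~\ref{def:adaptable-sepgraphs}, and if $v_0\in\Ifree$ with $k(i_0)=1$ there is again a single $X$, so in either case $(E^{(1)},C^{(1)})=(E,C)$ and $|I|$ does not decrease. The paper instead inducts over lower subsets $J\subseteq\Ifree$, starting from $J=\emptyset$, where the building blocks $\mathcal F_\emptyset$ are genuinely non-separated graphs (Definition~\ref{def:buildingblocks-graphs}) handled directly by Proposition~\ref{prop:basis-for-induction} via \cite{A18}; at each step one splits at a \emph{minimal} $v\in\Ifree\setminus J$ with $|C_v|>1$, which need not be the root. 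The pieces $(F_i,D^i)\in\mathcal F_J$ then share everything in $F^0\setminus T_F^0(v)$ and differ only in the subtree below $v$ (Lemma~\ref{lem:tildeTF}). Second, you place the pullback ``over $M(\tilde T(v_0))$'', but $M(\tilde T(v_0))$ is an order-ideal, not a quotient, and there are no natural maps $M(E^{(j)})\to M(\tilde T(v_0))$; the actual pullback (Theorem~\ref{thm:monoid-limit}, Proposition~\ref{prop:QisPullbackofQis}) is over the common \emph{quotient} $\overline M=M(F/H,D/H)$ obtained by killing $H=\tilde T_F^0(v)$, with the $\rho_i$ the quotient maps. Finally, at the algebra level you omit the shift: the paper works throughout with modified algebras $Q_K(F,D,\sigma)$ in which certain connectors satisfy $t^{s(\beta)}_l\beta=\beta\, t^{r(\beta)}_{l+|C_{r(\beta)}|-1}$ (Definition~\ref{def:buildingblocks-algebras} and Subsection~\ref{subsection:PullbackAlgebras}); this shift is exactly what makes $\theta_i$ and $\rho_i$ well-defined, and without it the pullback of the $Q_K(F_i,D^i)$ does not recover $Q_K(F,D)$.
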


To show this result, we reconstruct $(E,C)$ from a family of ordinary (non-separated) graphs obtained from it, which will be called the {\it building blocks} of $( E, C)$. 
We further show that this reconstruction is well-behaved at all the needed settings: Monoids, K-algebras and the $\mathcal V$-functor. In order to facilitate the understanding of this material, 
we have divided it in different subsections, in each of which we study the different frameworks.

{\bf  Throughout this section, $(E,C)$ will denote an adaptable separated graph satisfying condition {\rm (F)} and such that the associated poset $I$ has a largest element
	$i_0= [v_0]$.}

\subsection{Definition of building blocks}
\begin{definition}
\label{def:buildingblocks-graphs}
{\rm We define a building block of $(E, C)$ as a connected component of an ordinary graph obtained by choosing an element $X_v\in C_v$ for each $v\in E^0\setminus \text{Sink}(E)$.
 More precisely, say that $\varphi \colon E^0\setminus \text{Sink}(E) \to C$ is a {\it choice function}
 if $\varphi (v)\in C_v$ for each $v\in E^0\setminus \text{Sink}(E)$. Given such a choice function $\varphi$, define a graph $E_{\varphi}$ by
 $E_{\varphi}^1 = \bigsqcup _{v\in E^0\setminus \text{Sink}(E)} \varphi (v)$ and $E_{\varphi}^0 = s_E(E_{\varphi}^1) \cup r_E(E_{\varphi}^1)$. The source and range maps
 in $E_{\varphi}$ are defined in
 such a way that the inclusion map $E_{\varphi} \to E$ becomes a graph homomorphism.
 A {\it building block} of $(E,C)$ is a connected component of a graph of the form $E_{\varphi}$. We will denote by $\mathcal F$ the collection
 of all the building blocks of $(E,C)$. Observe that, since $(E,C)$
 satisfies condition {\rm (F)}, the associated poset of each building block is a tree.}\qed
\end{definition}

We will reconstruct $(E,C)$ from the collection $\mathcal F$, and we will be interested in the effect of this process at the monoid level.
Later we will extend this procedure to algebras.

We now define a family of separated graphs $\mathcal F _J$ for each
lower subset $J$ of $\Ifree$. (Here $\Ifree $ has the order induced
from the order of $I$.) Since $[v]=\{ v \}$ when $[v]\in \Ifree$, we
will identify $\Ifree$ with the corresponding subset of vertices of
$E^0$. Given a lower subset $J$ of $\Ifree$, a {\it choice function}
is a function $\varphi \colon \Ifree \setminus (\text{Sink}(E) \cup J) \to C$ such that
$\varphi (v)\in C_v$ for each $v\in  \Ifree \setminus (\text{Sink}(E) \cup J)$. For each
choice function $\varphi$ on $ \Ifree \setminus (\text{Sink}(E) \cup J)$, define a
separated graph $(E_{\varphi}, C^{\varphi})$ by setting
$$E^1_\varphi= \Big(\bigsqcup _{w\in  \Ifree \setminus (\text{Sink}(E) \cup J)} \varphi (w)\Big) \sqcup \Big(
\bigsqcup_{w\in E^0\setminus (\Ifree \setminus J)} s_E^{-1} (w)
\Big)$$ and $E_{\varphi}^0=s_E(E_{\varphi}^1) \cup
r_E(E_{\varphi}^1)$.
 The source and range maps, and the structure of
$C^{\varphi}$ are the natural ones, making the inclusion map
$(E_{\varphi}, C^{\varphi}) \to (E,C)$ a morphism in the category
$\SGr$ defined in \cite[Definition 3.2]{AG12}.  

Let $\mathcal F_J$ be the family of all the connected components of
the separated graphs of the form $(E_{\varphi}, C^{\varphi})$, where
$\varphi$ is a choice function for $\Ifree \setminus J$. For
$J=\emptyset $, we obtain $\mathcal F_{\emptyset} = \mathcal F$. For
$J=\Ifree$, we get $\mathcal F _{\Ifree} = \{ (E,C) \}$.
Note that all the separated graphs in $\mathcal F_J$ 
are adaptable and satisfy condition {\rm (F)}.

In particular, for a lower subset $J$ of $\Ifree$, the members $(F,D)$ of $\mathcal
F_J$ have the following properties:
\begin{enumerate}[\rm(i)]
\item $(F,D)$ is a connected separated graph satisfying condition
{\rm (F)}.
\item $F^0\subseteq E^0$ and  $F^1\subseteq E^1$,
\item For each $v\in F^0$ we have $D_v\subseteq C_v$,
\item For all $v\in J\cap F^0$, we have $D_v = C_v$,
\item For all $v\in F^0\setminus J$ such that $v$ is not a sink in $E$, we have $|D_v|=1$.
\end{enumerate}

Let $J$ be a lower subset of $\Ifree$ containing all the sinks of $E$, and let $v\in \Ifree$ such
that $v$ is minimal in $\Ifree \setminus J$. We may further assume
that $|C_v|>1$, otherwise we would have $\mathcal F _{J}= \mathcal
F_{J\cup \{v\}}$. Let $\varphi \colon \Ifree \setminus (J\cup \{ v
\}) \to C$ be a choice function and let $(F,D)$ be the unique connected component of 
$(E_{\varphi},C^{\varphi})$ such that $v\in F^0$. Write $C_v = \{ X_1,\dots , X_r \}$, and let $\varphi_i \colon 
E^0\setminus J\to C$ be the unique choice function which extends $\varphi$ and such that $\varphi_i (v)= X_i$. 
Let $(F_i,D^i)$ be the unique connected component of $(E_{\varphi_i}, C^{\varphi_i})$ with $v\in F_i^0$.
Observe that $(F,D)\in \mathcal F_{J'}$ and $(F_i,D^i) \in \mathcal F_J$, where $J':= J\cup \{ v \}$. 

In the following lemma, recall that $\tilde T_E(v)$ denotes the strict tree of a vertex $v$ of a separated graph $(E,C)$, thought as a separated graph.

\begin{lemma}
\label{lem:tildeTF}
In the above notation, one has that $\tilde T_E(v)= \tilde T _F (v)= \bigsqcup_{i=1}^r \tilde T_{F_i} (v)$. Moreover $F^0\setminus T^0_F(v) = F_i^0\setminus T_{F_i}^0(v)$,
and the restriction of $(F,D)$ to $F^0\setminus T_F^0(v)$ agrees with the restriction of $(F_i,D^i)$ to $F^0\setminus T_F^0(v)$, for all $i=1,\dots ,r$.
\end{lemma}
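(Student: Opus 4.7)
The plan is to unpack the definitions and systematically exploit condition {\rm (F)} together with the minimality of $v$ in $\Ifree \setminus J$. First I would establish $\tilde T_E(v) = \tilde T_F(v)$. Since $v$ is minimal in $\Ifree \setminus J$, every free vertex $w$ with $[w]<[v]$ lies in $J$, so by property (iv) of members of $\mathcal F_{J'}$ (with $J':=J\cup\{v\}$) the whole $C_w$ is retained in $D$; for regular SCCs strictly below $[v]$, all outgoing edges are kept in $E_\varphi$ by construction. Thus the separated structure on vertices strictly below $v$ is the same in $E$ and in $E_\varphi$. Every such vertex actually lies in $F$, because a directed path down from $v$ to it in $E$ stays inside $E_\varphi$: the initial edge is in $C_v \subseteq E_\varphi^1$ (as $v \in J'$), and the remaining edges emanate from vertices strictly below $v$, where $E_\varphi$ already contains all of $s_E^{-1}(w)$.

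Next I would verify $\tilde T_F(v) = \bigsqcup_{i=1}^r \tilde T_{F_i}(v)$. By Lemma \ref{lem:tildeEforest} the poset $I$ is a forest, and restricted to the descendants of $[v]$ it is a tree rooted at $[v]$. For any $w$ with $[w]<[v]$, the unique ascending chain of immediate predecessors from $[w]$ to $[v]$ terminates with a unique SCC $[u_1]$ having $[v]$ as its predecessor; by condition {\rm (F)}, all edges from $v$ into $[u_1]$ lie in a single $X_i \in C_v$. Consequently any directed path in $E_\varphi$ from $v$ to $w$ must begin with an edge in this $X_i$, so $w \in F_i^0$ for precisely one $i$. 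This gives the claimed disjoint decomposition at the level of vertices; the corresponding decomposition of edges and separated structure follows at once, since each $\tilde T_{F_i}(v)$ is the full restriction of $(F,D)$ to its vertex set.

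Finally, for $F^0 \setminus T_F^0(v) = F_i^0 \setminus T_{F_i}^0(v)$, note that $E^1_{\varphi_i} \subseteq E^1_\varphi$ with difference exactly $C_v \setminus X_i$; this gives $F_i^0 \subseteq F^0$ and also identifies the separated structures on $F^0 \setminus T_F^0(v)$, since all discrepant edges lie in $T_F^0(v)$. For the remaining inclusion, take $w \in F^0$ with $[w]\not\le [v]$ and pick an undirected path from $v$ to $w$ in $E_\varphi$ visiting $v$ only at its starting point. If the initial edge emanated from $v$, the path would descend into some $[u]<[v]$; but by the tree structure of $I$ and condition {\rm (F)}, any escape from the descendants of $[v]$ to a vertex with label $\not\le[v]$ forces re-entry into the SCC $[v]=\{v\}$, contradicting the single-visit assumption. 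Hence the initial edge enters $v$ from above, no edge of $C_v$ is used along the path, so the path lies in $E_{\varphi_i}$ and $w \in F_i^0$.

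The most delicate point I anticipate is precisely this no-re-entry argument in the third step: it requires translating undirected connectivity in $E_\varphi$ into a tree-theoretic statement in $I$, and then invoking condition {\rm (F)} to ensure that every edge between distinct SCCs runs along a Hasse edge of that tree, forcing any ``branch-switching'' detour to pass through the cut-point $v$.
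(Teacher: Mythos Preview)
Your argument is correct and follows the same route as the paper's proof, only with the details spelled out where the paper merely asserts them (the paper's entire proof is three sentences, invoking ``$\varphi_i$ extends $\varphi$'' and ``condition {\rm (F)}'' without elaboration). Your identification of $v$ as a cut-vertex in the tree $I$, forcing any undirected walk in $E_\varphi$ from a vertex below $v$ to a vertex not below $v$ to pass through $v$, is exactly the mechanism implicit in the paper's appeal to condition {\rm (F)}.

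One small point to tighten: in your third step you explicitly argue only the inclusion $F^0\setminus T_F^0(v)\subseteq F_i^0$. The reverse inclusion $F_i^0\setminus T_{F_i}^0(v)\subseteq F^0\setminus T_F^0(v)$ also requires checking that $\tilde T_{F_j}^0(v)\cap F_i^0=\emptyset$ for $j\neq i$ (equivalently, that no vertex of $\tilde T_{F_j}(v)$ is undirected-connected to $v$ in $E_{\varphi_i}$). This follows from the very same cut-vertex argument you give---an undirected path in $E_{\varphi_i}$ from $v$ that first descends must use an edge of $X_i$ and then, avoiding $v$, can never reach the branch $\tilde T_{F_j}(v)$---so the gap is cosmetic rather than substantive.
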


\begin{proof} Since all vertices in $\Ifree \cap \tilde T _{F_i}(v)$ belong to $J$, it is clear that $\tilde T _{F}(v)= \tilde T _E (v)$ and that 
$\tilde T_F(v)= \bigcup_{i=1}^r \tilde T_{F_i} (v)$.
 The fact that $\tilde T _{F_i}(v)\cap \tilde T _{F_j} (v)= \emptyset$ if $i\ne j$ follows from the fact that $(E,C)$ satisfies condition {\rm (F)}. 
 Hence, we get $\tilde T_E(v)= \tilde T _F (v)= \bigsqcup_{i=1}^r \tilde T_{F_i} (v)$. The second statement follows directly from the definitions
 of the involved separated graphs, because $\varphi_i$ extends $\varphi $ for all $i$. 
\end{proof}

At this point we need to recall some concepts from \cite{AG12}. 

\begin{definition} \label{defCSsat} 
{\rm Let $(E,C)$ be a finitely separated graph. 
Recall the relation $\ge$ defined on $E^0$ by setting $v\geq w$ if and only if there
is a path $\mu$ in $E$ with $s(\mu)=v$ and $r(\mu)=w$. A subset $H$
of $E^0$ is called \emph{hereditary} if $v \geq w$ and $v\in H$ always imply
$w\in H$. The set $H$ is called $C$-\emph{saturated} provided the following condition holds:

\hspace{4cm} If $X\in C_v$ for some $v\in E^0$ and $r(X) \subseteq H$, then $v\in H$. \qed
}
\end{definition}

By \cite[Corollary 6.10]{AG12}, the set $\mathcal H (E,C)$ of hereditary $C$-saturated subsets of $E^0$ parametrizes the set of order-ideals of $M(E,C)$.

\begin{definition}  \label{defACS}
{\rm Let $H$ be a hereditary, $C$-saturated subset of $E^0$. For any subset $X\subseteq E^1$, define
$$X/H := X\cap r^{-1}(E^0\setminus H).$$
For $H\in \mathcal H (E,C)$, define the quotient separated graph $(E/H,C/H)$, where $(E/H)^0 = E^0\setminus H$,
$(E/H)^1= \{ e\in E^1 \mid r(e)\notin H \}$, and $(C/H)_v= \{X/H \mid X\in C_v \}$ for $v\in E^0\setminus H$. 
(Note that since $H$ is $C$-saturated we get that $X/H \ne \emptyset$ whenever $X\in C_v$ and $v\in E^0\setminus H$.)  

There is a natural quotient map $M(E,C) \to M(E/H,C/H)$ which sends to $0$ all the vertices in $H$. If $M(H)$ denotes the order-ideal of $M(E,C)$ generated by $H$, then $M(E,C)/M(H)\cong M(E/H,C/H)$
(see \cite[Construction 6.8]{AG12}).} \qed
\end{definition}

We now fix the notation used during the remainder of this section. The second paragraph of Notation \ref{not:NotationPullbacks} reproduces, for latter reference, the
hypothesis and notation under which Lemma \ref{lem:tildeTF} has been established.  

\begin{notation}\label{not:NotationPullbacks}
	{\rm Let $(E,C)$ be an adaptable separated graph satisfying condition {\rm (F)}. Denote by $(I,\leq)$ the natural associated poset and let $J$ be a lower subset of $\Ifree$ containing all the sinks of $E$. 
	
	Let $v\in \Ifree$ such that $v$ is minimal in $\Ifree \setminus J$. As observed before, we may further assume that $|C_v|>1$.
	Let $\varphi \colon \Ifree \setminus (J\cup \{ v \}) \to C$ be a choice function and let $(F,D)$ be the unique connected component of 
	$(E_{\varphi},C^{\varphi})$ such that $v\in F^0$. Write $C_v = \{ X_1,\dots , X_r \}$, and let $\varphi_i \colon 
	E^0\setminus J\to C$ be the unique choice function which extends $\varphi$ and such that $\varphi_i (v)= X_i$. 
	Let $(F_i,D^i)$ be the unique connected component of $(E_{\varphi_i}, C^{\varphi_i})$ with $v\in F_i^0$.
	Observe that $(F,D)\in \mathcal F_{J'}$ and $(F_i,D^i) \in \mathcal F_J$, where $J':= J\cup \{ v \}$. 
	
	 Denote $H = \tilde T_F^0(v)$ and $H_i= \tilde T_{F_i}^0(v)$. Then $H$ is a hereditary $D$-saturated subset of $F^0$ and each $H_i$ is a hereditary
$D^i$-saturated subset of $F_i^0$ and a hereditary and $D$-saturated subset of $F^0$. Note that, by Lemma \ref{lem:tildeTF}, we have $H=\bigsqcup_{i=1}^r H_i$. The separated graphs $(F/H,D/H)$ and $(F_i/H_i,D^i/H_i)$ are not equal, but they are similar.
The only difference is that the vertex $v$ emits $r$ different loops $\alpha_1,\alpha _2,\dots , \alpha_r$ in the graph $F/H$, which belong to the $r$ different sets $X_1/H,X_2/H, \dots ,X_r/H$ respectively, and 
the same vertex $v$ emits only one loop $\alpha _i$ in the graph $F_i/H_i$.}
\end{notation}

\subsection{Monoids}\label{subsection:Monoids}

Assuming Notation \ref{not:NotationPullbacks}, we observe that the difference between $(F/H,D/H)$ and $(F_i/H_i, D^i/H_i)$ is not detected by the monoid $M(\cdot,\cdot)$. Namely, we get that 
$$M(F/H,D/H)= M(F_i/H_i, D^i/H_i)$$ for all $i\in\{1,\ldots,r\}$. We denote this common  monoid by $\ol{M}$. 

Gathering everything, we have natural surjective monoid homomorphisms
$$M(F,D) \overset{\theta_i}{\longrightarrow } M(F_i,D^i) \overset{\rho_i}{\longrightarrow} \ol{M}$$
such that $\rho_i\circ \theta_i = \rho_j\circ \theta_j$ for all $1\le i,j \le r$. Note that 
the maps $\theta _i$ can be identified with the quotient map $M(F,D) \to M(F/(\oplus_{j\ne i}H_j), D/(\oplus _{j\ne i}H_j)=M(F,D)/M(\oplus_{j\ne i} H_j)$
and, similarly, $\rho_i$ can be identified with the quotient map $M(F_i,D^i) \to M(F_i/H_i,C/H_i)=M(F_i,D^i)/M(H_i)$. 

We prove next that the maps $\theta _i$ are the limit (pullback) of the maps $\rho_i$.

\begin{theorem}
 \label{thm:monoid-limit}
 Assuming Notation \ref{not:NotationPullbacks}, we have that the family of maps $$ \{ \theta _i \colon M(F,D)\to M(F_i,D^i) \mid i=1,\dots ,r \}$$ is the limit (in the category of commutative monoids) of the system
 of maps $$\{ \rho_i\colon M(F_i,D^i) \to \ol{M} \mid i=1,\dots , r \}.$$  
 \end{theorem}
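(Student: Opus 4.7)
The plan is to exhibit an explicit inverse to the natural map
\[
\Phi\colon M(F,D)\longrightarrow P:=\bigl\{(b_1,\ldots,b_r)\in\textstyle\prod_{i=1}^rM(F_i,D^i)\,:\,\rho_1(b_1)=\cdots=\rho_r(b_r)\bigr\},\qquad b\longmapsto(\theta_i(b))_i.
\]
The image lies in $P$ because $\rho_i\circ\theta_i$ is, for every $i$, the common quotient $M(F,D)\to\ol{M}$. By Lemma~\ref{lem:tildeTF}, the vertex set of $F$ decomposes as $F^0=F^0_{\mathrm{top}}\sqcup H_1\sqcup\cdots\sqcup H_r$, where $F^0_{\mathrm{top}}:=F^0\setminus H$, and $F_i^0=F^0_{\mathrm{top}}\sqcup H_i$ for each $i$.

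The key structural observation is that both $M(F,D)$ and each $M(F_i,D^i)$ split over $\ol{M}$. Sending each generator to the generator of the same name defines monoid homomorphisms
\[
\tilde{\sigma}\colon\ol{M}\to M(F,D),\qquad \sigma_i\colon\ol{M}\to M(F_i,D^i),\qquad \iota_i\colon M(F_i,D^i)\to M(F,D),
\]
each well defined because under condition~{\rm (F)} the defining relations of the source are already present in the target: the relation at $v$ coming from $X_i$ is one of the $r$ relations at $v$ in $(F,D)$, and at every other vertex the relevant partitions coincide. Since $\rho_i\circ\sigma_i=\mathrm{id}$ and $\theta_i\circ\iota_i=\mathrm{id}$, both $\sigma_i$ and $\iota_i$ are injective, and $\iota_i\circ\sigma_i=\tilde{\sigma}$ by inspection on generators.

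To define the inverse, given $(b_1,\ldots,b_r)\in P$ with common image $\ol{b}\in\ol{M}$, I pick any representative of $b_i$ as a finite sum of generators indexed by $F_i^0=F^0_{\mathrm{top}}\sqcup H_i$ and split off its $H_i$-part $h_i\in M(H_i)$. The remaining $F^0_{\mathrm{top}}$-part lies in the image of $\sigma_i$, and since $\sigma_i\circ\rho_i$ is the identity on that image it must equal $\sigma_i(\ol{b})$; hence $b_i=\sigma_i(\ol{b})+h_i$ in $M(F_i,D^i)$. I then set
\[
\Psi(b_1,\ldots,b_r):=\tilde{\sigma}(\ol{b})+h_1+\cdots+h_r\in M(F,D).
\]

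The delicate point, which I expect to be the main obstacle, is that the $h_i$ are highly non-unique -- the separated-graph relation at $v$ makes cancellation from $a_v$ fail badly -- so one must check that $\Psi$ does not depend on the choices. This is exactly where the injectivity of $\iota_i$ is used: if $h_i,h_i'\in M(H_i)$ both satisfy $\sigma_i(\ol{b})+h_i=\sigma_i(\ol{b})+h_i'$ in $M(F_i,D^i)$, then applying $\iota_i$ gives $\tilde{\sigma}(\ol{b})+h_i=\tilde{\sigma}(\ol{b})+h_i'$ in $M(F,D)$, and adding $\sum_{j\ne i}h_j$ to both sides furnishes the required identification of $\Psi$-values. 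Once $\Psi$ is well defined it is automatically additive, since summing the decompositions of $(b_i)$ and $(b_i')$ yields a decomposition of $(b_i+b_i')$. The identities $\Phi\circ\Psi=\mathrm{id}_P$ and $\Psi\circ\Phi=\mathrm{id}_{M(F,D)}$ are then immediate: any $b\in M(F,D)$ admits, from a representative, a decomposition $b=\tilde{\sigma}(\ol{b})+h_1+\cdots+h_r$ with $\ol{b}$ the image of $b$ in $\ol{M}$, and $\theta_i(b)=\sigma_i(\ol{b})+h_i$ then supplies an admissible decomposition for $\Psi$. The universal property of the pullback transfers from $P$ to $M(F,D)$ via the bijection $\Phi$.
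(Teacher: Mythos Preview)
Your argument is correct and takes a genuinely different route from the paper's. The paper proves surjectivity of $\Phi$ by a refinement-matrix argument (using that $M(F,D)$ is a refinement monoid together with $I_1\cap I_2=\{0\}$), and proves injectivity by lifting to the free monoid on $F^0$ and invoking the confluence lemma \cite[Lemma 2.4]{ABP} twice. You instead observe that the quotient maps $\theta_i$ and $\rho_i$ admit monoid sections $\iota_i$ and $\sigma_i$, and use these to write down an explicit inverse $\Psi$. The existence of these sections is exactly what condition {\rm (F)} buys: since only $v$ emits edges into $H$, and the relations at $v$ in $\ol{M}$ collapse to the triviality $a_v=a_v$, every defining relation of the source monoid is already a relation of the target.

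Your approach is more elementary --- it avoids both the refinement property and the confluence machinery --- and it makes the isomorphism completely explicit. The paper's approach, by contrast, exhibits two techniques (refinement diagrams for surjectivity onto a pullback, and confluence for injectivity) that recur throughout the theory of graph monoids and would apply in settings where no sections are available. One small remark: your well-definedness check for $\Psi$ silently uses that the inclusion $M(H_i)\hookrightarrow M(F_i,D^i)$ composed with $\iota_i$ agrees with the inclusion $M(H_i)\hookrightarrow M(F,D)$; this is clear on generators but worth stating, since Lemma~\ref{lem:restriction-monoids} is what guarantees these inclusions are honest embeddings.
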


\begin{proof}
 Let $\{ \gamma_i \colon P\to M(F_i,D^i) \mid i=1,\dots , r \} $ be the limit of the system $\{ \rho_i \}$ in the category of commutative monoids. We will use the usual description of $P$, namely
 $$P = \{ (x_1,x_2,\dots , x_r)\in \prod _{i=1}^r M(F_i,D^i) \mid \rho_i (x_i) = \rho _j(x_j)\,\,  \forall i,j =1,\dots , r \} ,$$
 and $\gamma_i \colon P\to M(F_i,D^i)$ are defined by the projection maps. We have a canonical morphism $\theta \colon M(F,D) \to P$ defined by
 $$\theta (x) = (\theta_1(x),\theta _2 (x) ,\dots , \theta _r(x)),$$
 and we need to show that $\theta$ is a monoid isomorphism. 
 Let us denote by $I_i$ the order-ideal $M(H_i)$ generated by the hereditary $D$-saturated set $H_i = \tilde T^0_i (v)$. Note that $\theta _i (\oplus _{j\ne i} I_j) = 0$.

{\bf Surjectivity of $\theta$:} We provide the proof for $r=2$ since an easy inductive argument allows to extend it to the general case.

In this case, we have the following situation:

\[
\begin{tikzpicture}
  \matrix (m) [matrix of math nodes,row sep=1em,column sep=1em,minimum width=1em]
  {
      & & 0& & 0& \\
         & & I_1& = & I_1& \\
            0&I_2 & M(F,D)& & M(F_1,D^1)&0 \\
               & =& & P& & \\
                  0& I_2& M(F_2,D^2)& & \ol{M}&0 \\
                     & & 0& & 0& \\};

  \path[-stealth]
    (m-1-3) edge [->] node  {} (m-2-3)
    (m-2-3) edge [->] node  {} (m-3-3)
    (m-3-3) edge [->] node  [left] {$\theta_2$} (m-5-3)
             edge [->] node  [above] {$\theta_1$} (m-3-5)
             edge [->] node  [above] {$\theta$} (m-4-4)
    (m-4-4) edge [->] node   {} (m-5-3)
             edge [->] node  {} (m-3-5)
    (m-5-3) edge [->] node  [below] {$\rho_2$} (m-5-5)
            edge [->] node  [] {} (m-6-3)

    (m-3-5) edge [->] node  [right] {$\rho_1$} (m-5-5)

    (m-5-5) edge [->] node  [] {} (m-5-6)
            edge [->] node  [] {} (m-6-5)

    (m-3-5) edge [->] node  [] {} (m-3-6)

    (m-1-5) edge [->] node  {} (m-2-5)
    (m-2-5) edge [->] node  {} (m-3-5)
      (m-3-2) edge [->] node  {} (m-3-3)
      (m-3-1) edge [->] node  {} (m-3-2)
      (m-5-2) edge [->] node  {} (m-5-3)
      (m-5-1) edge [->] node  {} (m-5-2)
    ;

\end{tikzpicture}
\]where $\theta(x)=(\theta_1(x),\theta_2(x))$. Now let $(x,y)\in P$ be such that $\rho_1(x)=\rho_2(y)$ with $x\in M(F_1,D^1)$ and $y\in M(F_2,D^2)$. 
By the diagram, there exists $\tilde x\in M(F,D)$ such that $\theta_1(\tilde x)=x$. Hence, $\theta(\tilde x)=(\theta_1(\tilde x),\theta_2(\tilde x))=(x,\theta_2(\tilde x))$.

By construction, it follows that $\rho_2\circ\theta_2=\rho_1\circ\theta_1$; hence, $$\rho_2(\theta_2(\tilde x))=\rho_1(\theta_1(\tilde x))=\rho_1(x)=\rho_2(y),$$ 
implying the existence of $u_2,u_2'\in I_2$ such that $$\theta_2(\tilde x)+u_2=y+u_2'.$$

Running a similar argument, but now with $y$, one finds $\tilde y\in M(F,D)$ such that $\theta_2(\tilde y)=y$, and so,
$$\theta_2(\tilde x+u_2)=\theta_2(\tilde x)+u_2=\theta_2(\tilde y)+u_2'=\theta_2(\tilde y+u_2') \,\,\text{ since $\theta_2$ is the identity on $I_2$}.$$ 
Therefore, there exist $u_1,u_1'\in I_1$ such that $$\tilde x+u_2+u_1=\tilde y+u_2'+u_1'.$$
Now, since $M(F,D)$ is a refinement monoid, one can build the following refinement matrix:
\[
\begin{tikzpicture}
    \draw[step=1.cm,color=gray] (-2,-2) grid (2,2);
      \matrix(vector)[matrix of nodes, nodes={inner sep=0pt,text width=.9cm,align=center,minimum height=.8cm}]
      {
           & $\tilde x$ & $u_2$ & $u_1$\\
           $\tilde y$ & $y_{1,1}$ & $y_{1,2}$ & $y_{1,3}$\\
             $u_2'$ & $y_{2,1}$ & $y_{2,2}$ & $0$\\
               $u_1'$ & $y_{3,1}$ & $0$ & $y_{3,3}$\\
          };
\end{tikzpicture}
\]
where the two zeros arise since $I_1\cap I_2 = \{ 0 \}$. Set $z:= y_{1,1}+y_{1,2}+y_{3,1}$.
Then we have, using that $y_{1,2}, y_{2,1}\in I_2$,
$$\theta_1(z) = \theta_1(y_{1,1}+y_{1,2}+y_{3,1}) = \theta_1 (y_{1,1}+ y_{3,1})= \theta_1 (y_{1,1}+ y_{2,1} + y_{3,1}) =  \theta_1 (\tilde{x})= x . $$
Similarly, using that $y_{3,1}, y_{1,3}\in I_1$ we get 
$$\theta_2 (z) = \theta_2 (y_{1,1}+y_{1,2}+y_{3,1})= \theta _2 (y_{1,1}+ y_{1,2}+ y_{1,3}) =  \theta_2(\tilde y) = y.$$ 
Therefore, the element $z\in M(F,D)$ satisfies that $\theta(z)=(x,y)$ showing the desired surjectivity.

{\bf Injectivity of $\theta$:} As before, we will just show the injectivity in the case $r=2$.
Let $G$ be the free commutative monoid on the set $F^0$, and recall from \cite[Subsection 2.1]{ABP} that the monoid $M(F,D)$ can be described as 
$G/{\sim}$, where $\sim $ is the congruence on $G$ generated by $v\sim {\bf r}(X)$ for all $v\in E^0$ and $X\in C_v$.
(Here ${\bf r}(X)= \sum_{x\in X} r(x)$.) For $\alpha \in G$, we will denote the class of $\alpha $ in $M(F,D)$ by $\ol{\alpha}$.  
We employ a similar notation for the monoids $M(F_i,D^i) = G_i/{\sim_i}$, where $G_i$ is the free commutative monoid on the set $F_i^0$ and $\sim_i$ is the corresponding congruence, for $i=1,2$. 
We will use the relation $\to $ given in \cite[Definition 2.2]{ABP}.

Now, let $\overline \alpha$ and $\overline \beta$ in $M(F,D)$ be such that $\theta_i(\overline \alpha)=\theta_i(\overline \beta)$ for $i=1,2$. 
We can uniquely write $\alpha= \alpha_0+\alpha_1+\alpha_2$ and $\beta=\beta_0+\beta_1+\beta_2$, where $\mbox{supp}(\alpha_0),\mbox{supp}(\beta_0)\subseteq F^0\setminus H$ and 
$\mbox{supp}(\alpha_i),\mbox{supp}(\beta_i)\subseteq H_i$ for $i=1,2$.
Then, $$\theta_i(\overline \alpha)=\overline{\alpha_0+\alpha_i}=\overline{\beta_0+\beta_i}=\theta_i(\overline \beta)\text{ for }i=1,2.$$ 
Since $\alpha_0+ \alpha_1 \sim \beta_0 + \beta _1$ in $G_1$, it follows from \cite[Lemma 2.4]{ABP} that there exists $\gamma \in G_1$ such that
$\alpha _0+\alpha _1 \to \gamma $ and $\beta_0+\beta _1 \to \gamma $ in $G_1$. Now we can look at $\gamma $ as an element of $G$ and clearly the strings in $G_1$ can be
used to witness the relations $\alpha_0+\alpha_1\to \gamma $ and $\beta _0+\beta_1 \to \gamma $ in $G$. (This uses in a crucial way the fact, which follows from condition (F), 
that the only vertex in $F^0\setminus H$ that emits edges to 
$H$ is the vertex $v$.)
In particular we get $\ol{\alpha_0+\alpha_1}= \ol{\gamma } = \ol{\beta_0+\beta_1}$ in $M(F,D)$. 
Write $\gamma = \gamma _0+\gamma _1$, with $\mbox{supp} (\gamma _0)\subseteq F^0\setminus H$ and 
$\mbox{supp} (\gamma _1) \subseteq H_1$. We have 
$$\ol{\gamma _0 + \alpha _2} = \theta _2 (\ol{\gamma _0}+\ol{\gamma _1} + \ol{\alpha_2}) =\theta _2 (\ol{\alpha}) = \theta_2 (\ol{\beta}) = \theta _2 (\ol{\gamma _0}+\ol{\gamma _1} + \ol{\beta_2})= \ol{\gamma_0+ \beta_2} $$
in $M(F_2,D^2)$, so that $\gamma_0+\alpha_2 \sim \gamma_0+\beta_2$ in $G_2$. Applying again \cite[Lemma 2.4]{ABP}, we obtain $\gamma' \in G_2$ such that
$\gamma_0+\alpha_2 \to \gamma '$ and $\gamma_0 + \beta_2\to \gamma '$ in $G_2$. As above we can look $\gamma'$ as an element of $G$ and we have
$\gamma_0+\alpha_2\to \gamma' $ and $\gamma_0+\beta_2 \to \gamma'$ in $G$. But now we have
$$\alpha = \alpha _0+\alpha _1+\alpha_2 \to \gamma_0 + \gamma_1 + \alpha_2 \to \gamma ' + \gamma _1 $$
and similarly $\beta \to \gamma'+ \gamma _1$, showing that $\ol{\alpha} = \ol{\gamma' + \gamma_1}= \ol{\beta}$ in $M(F,D)$, as desired.  

This concludes the proof of the result.
\end{proof}


\subsection{K-algebras}
\label{subsect:alg-buildingblocks}
In this short subsection, we introduce our basic building blocks for the $K$-algebras. Throughout the subsection $K$ will be a field and $G$ a finite directed graph. In the final part of the subsection we will
give the definition and the key properties of the algebra building blocks $Q_K(F,\sigma)$ corresponding to $F\in \mathcal F$ (see Definition \ref{def:buildingblocks-graphs} for the definition of the family $\mathcal F$). 
This will provide the basis for our inductive arguments.

We first quickly review the theory developed in \cite{A18}. Let $(I,\le )$ be a finite poset. 
Following \cite{A18}, we define a {\it poset of fields} as a family $\mathbf K = \{ K_i : i\in I \}$ of fields $K_i$ with the property that $K_i\subseteq K_j$ if $j\le i$.
Let $G$ be a finite directed graph. We assume that there is a pre-order $\le $ on $G^0$ such that $v\le w$ whenever there is a directed path $\gamma $ such that $s_G(\gamma) = w$ and $r_G(\gamma) = v$, and 
we further assume that the partially ordered set $I:= G^0/{\sim}$, associated to the pre-ordered set $(G^0,\le)$, is a tree with greatest element $i_0:=[v_0]$. Denote by $[v]$ the class of $v\in G^0$ in $I$.

Given a poset of fields $\mathbf K$ over $I$, we define the algebra $P_{\mathbf{K}}((G))$ as the algebra of formal power series of the form $a=
\sum_{\gamma \in \text{Path} (G)} a_{\gamma} \gamma $, where each
$a_{\gamma}\in K_{[r(\gamma )]}$. The usual multiplication of formal
power series gives an structure of algebra over $K_0:=K_{i_0}$ on
$P_{\mathbf{K}}((G))$. Indeed if $(a\gamma)(b\mu)$ is nonzero, where $a\in K_{[r(\gamma)]}$ and $b\in K_{[r(\mu )]}$, then $s(\mu) = r(\gamma )$
and it follows from the property of $\le$ that $[r(\gamma)] \ge [r(\mu )]$ in $I$. Then we have $K_{[r(\gamma)]}\subseteq K_{[r(\mu)]}$ 
and so $ab\in K_{[r(\mu)]} = K_{[r(\gamma \mu)]}$, which shows that the product in $P_{\mathbf{K}}((G))$ is well-defined. 

The path algebra $P_{\mathbf{K}}(G)$ is defined as the subalgebra of
$P_{\mathbf{K}}((G))$ consisting of all the series in
$P_{\mathbf{K}}((G))$ having finite support. We have a natural augmentation homomorphism 
$$\epsilon \colon P_{\mathbf K} ((G)) \longrightarrow \bigoplus _{v\in G^0} K_{[v]}v .$$
We denote by $\Sigma $ the set of all square matrices over $P_{\mathbf K}(G)$ which are sent to invertible matrices by $\epsilon$.

Write $R:=P_{\mathbf{K}}(G)$. For any $v\in G^0$ such that $s^{-1}(v)\neq\emptyset$ we put
 $s^{-1}(v)=\{e^v_1,\dotsc,e^v_{n_v}\}$, and we
consider the left $R$-module homomorphism
 \begin{align*}
  \mu_v\colon Rv&\longrightarrow \bigoplus_{i=1}^{n_v}Rr(e^v_i)\\
  r&\longmapsto\left(re^v_1,\dotsc,re^v_{n_v}\right)
 \end{align*}
 Write $\Sigma_1=\{\mu_v\mid v\in G^0,\,s^{-1}(v)\neq \emptyset\}$.
 
We have

\begin{theorem}[\cite{A18}]
\label{thm:Poset-of-Fields}
With the previous notation, let $Q_{\mathbf K}(G) = P_{\mathbf K}(G) (\Sigma \cup \Sigma_1)^{-1}$. Then the following properties hold:
\begin{enumerate}
 \item $Q_{\mathbf{K}}(G)$ is a hereditary von Neumann regular ring.
 \item The natural map $M(G) \to \mathcal V (Q_{\mathbf K}(G))$ is a monoid isomorphism.  
\end{enumerate}
\end{theorem}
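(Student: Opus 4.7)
My plan is to adapt the construction and arguments of \cite{AB} (recalled in Subsection \ref{subsect:prelms-on-univ-local}) to the poset-of-fields setting, using induction on the tree $I$. The first step is to give an explicit model of $Q_{\mathbf{K}}(G)$ by the same three-stage universal-localization procedure used for $Q_K(E)$ in Section \ref{sect:algebras}: form the rational series algebra $P_{\mathbf{K}}^{\mathrm{rat}}(G) := P_{\mathbf{K}}(G)\Sigma^{-1}$ as the division closure of $P_{\mathbf{K}}(G)$ inside $P_{\mathbf{K}}((G))$; build the associated skew algebra $S := P_{\mathbf{K}}^{\mathrm{rat}}(G)\langle G^{*};\tau,\tilde\delta\rangle$ via Proposition \ref{prop:algebraSinAB}; and quotient $S$ by the ideal generated by the idempotents $q_v = v - \sum_{i=1}^{n_v} e_i^v (e_i^v)^{*}$ for each non-sink $v$. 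The universal property of universal localization identifies the result with $Q_{\mathbf{K}}(G)$. The only genuine subtlety at this stage is to check that the transductions $\tilde\delta_e$ and the maps $\tau_e$ are well-defined on the mixed-field rational series; this is routine because $\tilde\delta_e$ preserves the terminal vertex of any path, and hence preserves the coefficient field $K_{[r(\gamma)]}$ attached to each $\gamma$.

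For (1), I would argue by induction on $|I|$. The base case $|I|=1$ is the strongly connected, single-field situation and follows from Theorem \ref{thm:properties-relativeQ} applied to $K_{i_0}$. For the inductive step, choose a minimal $p \in I$, set $H = G^{0}_{p}$, and let $\mathbf{K}' = \mathbf{K}|_{I\setminus\{p\}}$. Using the explicit model, one shows that the ideal $\mathcal I(H) \trianglelefteq Q_{\mathbf{K}}(G)$ generated by $H$ is Morita equivalent to the base-case algebra $Q_{K_p}(G_p)$, while the quotient $Q_{\mathbf{K}}(G)/\mathcal I(H)$ is isomorphic to $Q_{\mathbf{K}'}(G/H)$. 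Hereditariness and von Neumann regularity then pass from the ideal and the quotient to $Q_{\mathbf{K}}(G)$, using the standard fact that an extension of a hereditary regular ring by a hereditary regular ideal is hereditary regular.

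For (2), I would construct a faithful graded representation of $Q_{\mathbf{K}}(G)$ on a space $V = \bigoplus_{p \in I} V(p)$, where each $V(p)$ is a $K_p$-vector space built by order-induction, exactly along the lines of Theorem \ref{theor:repres}. Combined with a normal form for elements of $Q_{\mathbf{K}}(G)$ analogous to Theorem \ref{thm:linearspanofQ}, this yields a presentation of $\mathcal V(Q_{\mathbf{K}}(G))$ by the same generators and relations as $M(G)$, and hence the natural isomorphism. Equivalently, one can pair the short exact sequence $0 \to \mathcal V(\mathcal I(H)) \to \mathcal V(Q_{\mathbf{K}}(G)) \to \mathcal V(Q_{\mathbf{K}}(G)/\mathcal I(H)) \to 0$ (valid by (1) together with Proposition \ref{prop:basicpropsof-VR}) with the corresponding monoid sequence $M(H) \hookrightarrow M(G) \twoheadrightarrow M(G/H)$ and close the induction by a five-lemma argument.

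The main obstacle I anticipate is the faithful-representation step for (2): carrying out the order-induction in the mixed-field setting requires choosing $V(p)$ with precisely the correct coefficient field $K_p$ and then checking that generators coming from different components of $I$ act compatibly across the inclusions $K_i \subseteq K_j$. Once this technicality is handled, the remaining arguments are essentially routine adaptations of their single-field counterparts.
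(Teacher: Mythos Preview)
The paper does not prove this theorem at all; it is quoted as a black box from \cite{A18} and used as input to the inductive argument in Subsection~\ref{subsect:alg-buildingblocks}. There is therefore no ``paper's own proof'' to compare your sketch against.

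That said, a couple of points in your sketch deserve comment. Your plan for (1), passing regularity and hereditariness from an ideal and quotient to the extension, is correct for von Neumann regularity (this is standard) but is not, in general, a ``standard fact'' for hereditariness: extensions of hereditary rings need not be hereditary. In \cite{AB} and \cite{A18} the hereditary property is obtained instead by realizing the regular algebra as a universal localization of a hereditary ring (namely the Cohn/Leavitt path algebra), and universal localization preserves hereditariness. You should expect the same mechanism here rather than an extension argument.

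For (2), invoking a ``five-lemma argument'' on the diagram of monoid sequences is risky: there is no five lemma for commutative monoids, and the sequence $M(H)\hookrightarrow M(G)\twoheadrightarrow M(G/H)$ is not exact in any abelian sense. The approach that actually works (and is the one taken in \cite{AB} for a single field, and in \cite{A18} in the present generality) is the one you list first: the Bergman machinery of universal localization gives $\mathcal V$ directly on the generators-and-relations level, so that $\mathcal V(Q_{\mathbf K}(G))$ is computed to be $M(G)$ without needing any inductive comparison of extensions.
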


The algebra $Q_{\mathbf K}(G)$ is called the {\it regular algebra of $G$ over the poset of fields} $\mathbf K$. Note that it is an algebra over $K_0$ (where $K_0=K_{i_0}$).    

We are now interested in a particular type of these algebras. Suppose we are given positive integers $n(i)$ for each $i\in I_G\setminus \{ i_0\}$. For $i\in I_G\setminus \{ i_0\}$, let $i=i_k<i_{k-1}<\cdots <i_1<i_0$ be the 
unique maximal chain between $i$ and $i_0$. Then we set $N(i) = \sum_{j=1}^k n(i_j) -k$, and define fields $K_i$ by $K_0=L := K(t_1,t_2,\dots )$ and 
$$K_i = K(t_{-N(i)+1}, t_{-N(i)+2},t_{-N(i)+3},\dots ).$$
Obviously, we have $K_i\subseteq K_j$ if $j\le i$, so that $\mathbf K = \{K_i:i\in I_G\}$ is a poset of fields.

We are going to compare $Q_{\mathbf K}(G)$ with another similar construction. For this, we recall our standing assumption about the separated graph $(E,C)$ through this section, that is, $(E,C)$ is an
adaptable separated graph satisfying condition {\rm (F)} and such that the associated poset $I$ is a tree. 

\begin{definition}
 \label{def:buildingblocks-algebras} {\rm
Let $F\in \mathcal F $ be one of the building blocks for $(E,C)$ (see Definition \ref{def:buildingblocks-graphs}).
Recall that the graph $F$ is a non-separated graph.

Let $\mathcal S_K(F,\sigma)$ be the $*$-algebra with family of generators $F^0\cup F^1\cup \{t_i^v,(t_i^v)^{-1}: v\in F^0, i\in \N \}$ 
and defining relations (\ref{pt:KeyDefs}) thinking of $F$ as a separated graph with the trivial separation, but with a shift in the relations given by
$$t^{s(\beta)}_l \beta  = \beta t^{r(\beta )}_{l+|C_{r(\beta)}|-1}$$
for each connector $\beta $ in $F$ such that $r(\beta)$ is not a sink. Here $C_{r(\beta)}$ refers to our separated graph $(E,C)$. 
Note that this only affects some of the relations \ref{pt:KeyDefs}(ii), \ref{pt:KeyDefs}(iii), the rest of relations remain the same,
with the understanding that the separation on $F$ is the trivial one. Also observe that, for $\beta \in F^1$, $\beta $ is a connector in $F$ if and only if $\beta$ is connector in $E$.  

We then invert the same set of matrices $\Sigma_F$ as given in
Definition \ref{def:Q(E,C)} to get the algebra $$Q_K(F,\sigma) := \mathcal S_K(F,\sigma) \Sigma_F^{-1}.$$ 
Note that since the separation is trivial, 
the set $\Sigma (p)$ corresponding to a non-minimal free prime $p$ consists of univariate polynomials
$f(\alpha (p))\in L[\alpha (p)]$ such that $f(0)\ne 0$. \qed }  
\end{definition}

We are now ready to prove the basic result for our induction arguments.

\begin{proposition}
 \label{prop:basis-for-induction}
 Let $(E,C)$ and $F\in \mathcal F$ be as before. Then the algebra $Q_K(F,\sigma)$ is a separative von Neumann regular ring and the natural map $M(F)\to \mathcal V (Q_K(F,\sigma ))$ is a monoid isomorphism.
 \end{proposition}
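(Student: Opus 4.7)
The plan is to identify $Q_K(F,\sigma)$ with an algebra of the form $Q_{\mathbf{K}}(F)$ studied in \cite{A18} (cf.\ Theorem~\ref{thm:Poset-of-Fields}), then invoke that theorem. Since $(E,C)$ satisfies condition (F) and $F$ is a connected building block, the poset $I_F$ associated to $F$ is a tree (as noted after Definition~\ref{def:buildingblocks-graphs}) with a unique maximal element $i_0^F$, which is precisely the hypothesis of Theorem~\ref{thm:Poset-of-Fields}.

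First, I would fix an appropriate poset of fields $\mathbf{K} = \{K_{[v]} : [v] \in I_F\}$ by setting $n([v]) = |C_v|$ for each non-maximal $[v] \in I_F$ (so $n([v]) = k(p)$ if $[v] = p \in \Ifree$ and $n([v]) = 1$ if $[v] = p \in \Ireg$), and then defining the $K_{[v]}$'s as in the discussion preceding Theorem~\ref{thm:Poset-of-Fields}, with $K_{i_0^F} = L = K(t_1, t_2,\dots)$. Next I would construct a $K$-algebra isomorphism $\Phi\colon Q_K(F,\sigma) \xrightarrow{\cong} Q_{\mathbf{K}}(F)$ on generators: vertices and edges of $F$ go to themselves, while each $t_i^v$ is sent to the element $t_i \cdot v \in K_{[v]}\, v \subseteq Q_{\mathbf{K}}(F)$. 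The pivotal compatibility to check is that the shift $t^{s(\beta)}_l \beta = \beta\, t^{r(\beta)}_{l+|C_{r(\beta)}|-1}$ in $\mathcal{S}_K(F,\sigma)$ precisely realizes the field inclusion $K_{[s(\beta)]} \hookrightarrow K_{[r(\beta)]}$ that is encoded in $P_{\mathbf{K}}(F)$ by $t_l \mapsto t_{l + n([r(\beta)])-1}$ along the connector $\beta$: condition~(F) forces $[s(\beta)]$ to be the predecessor component of $[r(\beta)]$, so $N([r(\beta)]) - N([s(\beta)]) = n([r(\beta)]) - 1$, matching the shift exactly. Moreover, the Block~1 and Block~2 $*$-relations together with the universal inversion of $\Sigma_F$ implement, after passing to $Q_K(F,\sigma)$, the same module splittings $vP_{\mathbf{K}}(F) \cong \bigoplus_{e \in s^{-1}(v)} r(e)P_{\mathbf{K}}(F)$ that are obtained in $Q_{\mathbf{K}}(F)$ by inverting the maps $\mu_v$, and the inversion of $\Sigma(p)$ for $p\in I_F$ corresponds to the inversion of the set $\Sigma$ of matrices with invertible augmentation. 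This is a poset-of-fields enhancement of the standard identification of (relative) Cohn path algebras with universal localizations of path algebras.

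Granting the isomorphism $Q_K(F,\sigma) \cong Q_{\mathbf{K}}(F)$, Theorem~\ref{thm:Poset-of-Fields} immediately yields that $Q_K(F,\sigma)$ is a (hereditary) von Neumann regular ring and that the natural map $M(F) \to \mathcal{V}(Q_K(F,\sigma))$ is a monoid isomorphism. Separativity of $Q_K(F,\sigma)$ then follows from the identification $\mathcal{V}(Q_K(F,\sigma)) \cong M(F)$: since $F$ is finite, $M(F)$ is a finitely generated conical refinement monoid by \cite[Proposition~4.4]{AMFP}, and every such monoid is separative by \cite[Theorem~4.5]{Brook}. The main obstacle I anticipate is the bookkeeping in identifying $Q_K(F,\sigma)$ with $Q_{\mathbf{K}}(F)$: one must verify carefully that the particular shift $|C_{r(\beta)}|-1$ inserted in Definition~\ref{def:buildingblocks-algebras} coincides with the shift coming from the chosen poset of fields, and that the $*$-algebra presentation of $\mathcal{S}_K(F,\sigma)$ -- after localization at $\Sigma_F$ -- describes the same universal object as the path-algebra-plus-universal-localization construction of \cite{A18}.
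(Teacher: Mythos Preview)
Your overall strategy---identify $Q_K(F,\sigma)$ with a regular algebra $Q_{\mathbf K}(F)$ over a poset of fields and then invoke Theorem~\ref{thm:Poset-of-Fields}---is exactly the paper's approach. However, there is a genuine gap: you repeatedly assume $F$ is finite (most explicitly in the separativity argument, ``since $F$ is finite''), but a building block of an adaptable separated graph need not be finite. The strongly connected components $E_p$ at regular primes $p$ are only required to be row-finite, so $F^0$ can be infinite. Theorem~\ref{thm:Poset-of-Fields} (from \cite{A18}) is stated and proved only for \emph{finite} graphs, so it cannot be applied directly to $F$. The paper handles this by writing $F$ as a directed union of finite complete subgraphs $G\in\mathcal G$ (chosen so that $G^0$ surjects onto $I_F$), applying Theorem~\ref{thm:Poset-of-Fields} to each $Q_{\mathbf K}(G)$, and then passing to the limit using continuity of both $M(\cdot)$ and $\mathcal V(\cdot)$. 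Your separativity argument likewise needs to be replaced by the general fact that graph monoids $M(F)$ are separative for arbitrary row-finite $F$ (\cite[Theorem~6.3]{AMFP}), not by the finitely-generated/primely-generated route.

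There is also a bookkeeping error in your explicit map. Sending $t_i^v\mapsto t_i\cdot v$ cannot intertwine the shifted relation $t_l^{s(\beta)}\beta=\beta\,t_{l+|C_{r(\beta)}|-1}^{r(\beta)}$ with the relation in $P_{\mathbf K}(F)$, because in the poset-of-fields construction the inclusion $K_{[s(\beta)]}\subseteq K_{[r(\beta)]}$ is the literal inclusion of subfields (no relabelling of the $t_l$). The paper's map is $\varphi(t_l^v)=t_{-N([v])+l}\,v$, and the verification hinges on the identity $N([r(\beta)])=N([s(\beta)])+|C_{r(\beta)}|-1$ for connectors $\beta$ with $r(\beta)$ not a sink. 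You anticipated that this bookkeeping would be the delicate point, and indeed it is; but the finiteness issue above is the more substantive omission.
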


\begin{proof}
Let $I=F^0/{\sim}$ be the partially ordered set associated to the pre-ordered set $(F^0,\leq)$, where $\le$ stands for the path-way pre-order $\le $ on $F^0$ (see Definition \ref{def:Idefined}). 
By Definition \ref{def:buildingblocks-graphs}, $I$ is a finite tree. 

Let $\mathcal G $ be the directed set of all finite complete subgraphs $G$ of $F$ such that the map 
$$G^0 \longrightarrow I, \quad v\mapsto [v]$$
is surjective. For $G\in \mathcal G$, let $\le_G$ be the pre-order on $G^0$ determined by $v\le_G w$ if and only if there is a directed path $\gamma $ in $F$ such that
$s(\gamma ) = w$ and $r(\gamma ) = v$. In other words, $\le_G$ is the restriction of $\le$ to $G^0$. Obviously, if $\mu$ is a path in $G$ connecting $w$ to $v$ 
then $v\le_G w$, and therefore the order $\le_G$ contains the path-way pre-order on $G^0$. Moreover it is clear that the map $G^0\to I$ induces an order-isomorphism 
$G^0/{\sim_G}\cong I$.

By using the pre-order $\le_G$, and the poset of fields $\mathbf K$ on $I=G^0/{\sim _G}$ given by the choices $n([v])= |C_v|-1$ for all $[v]\in I\setminus \{ i_0\}$ such that $v$ is not a sink, and $n([v])=0$ if $v$ is a sink, 
we obtain $K_0$-algebras $Q_{\mathbf K}(G)$ for each $G\in \mathcal G$. 
Note that, for $G_1\le G_2$ in $\mathcal G$, we have a natural map $Q_{\mathbf K}(G_1) \to Q_{\mathbf K}(G_2)$ such that the diagram
 \[
     \begin{tikzpicture}
     \matrix (m) [matrix of math nodes,row sep=3em,column sep=3em,minimum width=1em]
     {
        M(G_1)   &   M(G_2) \\
        \mathcal V(Q_{\mathbf K}(G_1)) & \mathcal V(Q_{\mathbf K}(G_2))\\
     };
     \path[-stealth]
     (m-1-1) edge [->] node  [above] {} (m-1-2)
     edge [->] node [left] {$\cong$} (m-2-1)

     (m-1-2) edge [->] node [right] {$\cong$} (m-2-2)

     (m-2-1)       edge [->] node  [below] {} (m-2-2)
     ;
     \end{tikzpicture}
     \]
is commutative. Hence we get a directed system of $K_0$-algebras
$\{ Q_{\mathbf K}(G) : G\in \mathcal G \}$ and setting
 $$Q_{\mathbf K} (F) := \varinjlim_{G\in \mathcal G} Q_{\mathbf K} (G),$$
we see from Theorem \ref{thm:Poset-of-Fields} that $Q_{\mathbf K}(F)$ is von Neumann regular. Moreover, using again Theorem \ref{thm:Poset-of-Fields}, we obtain
 $$M(F) \cong  \varinjlim_{G\in \mathcal G} M(G) \cong \varinjlim_{G\in \mathcal G} \mathcal V (Q_{\mathbf K} (G)) \cong  \mathcal V (Q_{\mathbf K}(F)),$$
 where we use continuity of the $M$-functor on the category of row-finite graphs and complete graph homomorphisms
 (\cite[Lemma 3.4]{AMFP}) and continuity of the $\mathcal V$-functor on algebras.
 
 Define
 $$\varphi \colon Q_K(F,\sigma) \longrightarrow Q_{\mathbf K}(F)$$
 by sending the generators $E^0\cup E^1\cup (E^1)^*$ to the corresponding generators in $Q_{\mathbf K}(F)$, and $$\varphi (t_l^v) = t_{-N([v])+l} v\in Q_{\mathbf K}(F).$$
 Now observing that $N([r(\beta )]) = N([s(\beta)]) + |C_{r(\beta)}| -1$ for every connector $\beta$ in $F$ such that $r(\beta)$ is not a sink, we have that
 $$\varphi (t_l^{s(\beta)} \beta ) =  t_{l-N([s(\beta)])} \beta  = \beta t_{l+|C_{r(\beta)}|-1-N([r(\beta)])}= \varphi (\beta t^{r(\beta)}_{l+|C_{r(\beta)}|-1}) .$$
 It follows that the defining relations $Q_K(F,\sigma )$ are preserved by $\varphi$. Moreover all the matrices in $\Sigma_F$ are clearly invertible over $Q_{\mathbf K}(F)$,
 and this shows that $\varphi$ is well-defined. By using \cite[Proposition 2.7]{A18}, we obtain a well-defined inverse map $\varphi^{-1}$ from $Q_{\mathbf K}(F)$ onto $Q_K(F,\sigma)$. 
 
We have shown that $Q_K(F,\sigma) \cong Q_{\mathbf K}(F)$, and thus the result follows from our previous computations. For the part of the separativity of $Q_K(F,\sigma)$, one needs to recall that a ring with local units $R$ is separative if and 
only if its monoid
$\mathcal V (R)$ is a separative monoid (\cite[Proposition 3.6.4]{AAS}) and that the monoids $M(F)$ associated to a directed graph $F$ are separative (\cite[Theorem 6.3]{AMFP}, \cite[Theorem 3.6.21]{AAS}).
Thus the ring $Q_K(F, \sigma ) $ is separative, because so is $\mathcal V (Q_K(F,\sigma))\cong M(F)$. 
\end{proof}

\subsection{Pullbacks of algebras} \label{subsection:PullbackAlgebras}
In this section, we will introduce algebras the $Q_K(F,D, \sigma)$ for $(F,D)\in \mathcal F_J$, generalizing the definition of the above subsection. 
Working under Notation \ref{not:NotationPullbacks}, we will show in Proposition \ref{prop:QisPullbackofQis} that the algebra $Q_K(F,D,\sigma )$ is the pullback of a family of algebra homomorphisms 
$\{ \rho_i \colon Q_K(F_i,D^i,\sigma_i) \to Q_K(\ol{F},\ol{D},\ol{\sigma}):\,  i=1,\dots , r \}$. This will be used in the next subsection to show inductively Theorem \ref{thm:main-for-F}. 

We start with a definition that extends the one of the previous subsection.

\begin{definition} {\rm We adopt the notation and caveats established in Notation \ref{not:NotationPullbacks}. In particular we have $(F,D) \in \mathcal F_{J'}$ for a fixed lower subset $J$ of $\Ifree$, containing all the sinks of $E$, 
and $v\in \Ifree \setminus J$ 
is a minimal element of $\Ifree \setminus J$ with $|C_v|>1$, where $J'= J \cup \{ v \}$. The algebra $Q_K(F,D, \sigma)$ is the algebra obtained by the same generators and relations than those used in Section \ref{sect:algebras}, but with 
a modification in the definition of the relations \ref{pt:KeyDefs}(2)(ii),(iii) at some particular vertices. Concretely let $w\in \Ifree \setminus J'$ and consider the endomorphism  
$\sigma^w $ of $K(t^w_l)$ given by 
$$\sigma ^w(t_l^w)= t_{l+|C_w|-1}^w ,\qquad (l=1,2,\dots ) . $$
We then modify the relations \ref{pt:KeyDefs}(2)(ii),(iii) for each connector $\beta $ such that $r(\beta)\in \Ifree \setminus J'$ in the following way:
$$f(t^{s(\beta)}_l) \beta = \beta \sigma^{r(\beta)}(f(t_l^{r(\beta)})).$$
In particular, 
$$ t^{s(\beta)}_l \beta = \beta t_{l+|C_{r(\beta)}|-1}^{r(\beta)}$$
for each $l\in \N$. Relations \ref{pt:KeyDefs}(2)(ii),(iii) remain the same for all the other connectors $\beta$ in $F$.  

We will denote this algebra by  $Q_K(F,D, \sigma)$. A similar definition applies to $Q_K(F_i,D^i, \sigma_i)$, where now the new relations involve connectors $\beta $ in $F_i$ such that 
$r(\beta) \in \Ifree \setminus J$ (including $v$).  Recall that we denote by $\alpha_1,\alpha_2,\dots ,\alpha _r$ the different loops at $v$, with $\alpha_i \in X_i$ for all $i$.

The algebras $Q_K(F,D,\sigma)$ have the same essential properties as the algebras $Q_K(F,D)$. In particular, all the results stated in Section \ref{sect:algebras} for $Q_K(F,D)$ hold also for the algebras $Q_K(F,D,\sigma)$ with 
very minor modifications in the proofs.} \qed  
\end{definition}

 We denote by $\mathcal H$ (respectively $\mathcal H_i$) the ideal of $Q_K(F,D,\sigma)$ (respectively $Q_K(F_i, D^i, \sigma_i)$) generated by $H=\tilde T^0_F(v)$ (respectively $H_i=\tilde T^0_{F_i}(v)$). 
 It follows from Proposition \ref{prop:injective-map-lattice ideals} that $\mathcal H=\bigoplus ^{r}_{i=1}\mathcal H_i$.
 We also define the separated graph $(\overline F,\overline D)$ by taking $(\overline F)^0= E^0\setminus H$, $\ol F ^1= F^1\setminus T^1_F(v)$,  $\overline D_v= \emptyset$ and $\overline D _w = D_w$ for 
 $w\in (\overline F)^0 \setminus \{ v\}$.
 Note that $v$ is a sink in $\ol{F}$. The algebra $Q_K(\ol F, \ol D, \ol \sigma)$ is build in a similar way as the algebra $Q_K(F,D,\sigma)$. Indeed for all the connectors $\beta $ in $\ol F$ such that 
 $r(\beta ) \ne v$, we take the same relations as in $Q_K(F,D,\sigma)$. If $\beta $ is a connector in $\ol F$ such that $r(\beta ) =v$, then we set 
 $$t_l^{s(\beta)} \beta = \beta t_{l+r}^v.$$
 We then have a well-defined surjective homomorphism $\theta \colon Q_K(F,D,\sigma) \to Q_K(\ol F, \ol D ,\ol \sigma)$ which is the identity on all generators $\ol F^0 \cup \ol F^1\cup (\ol F ^1)^*
 \cup \{ t_l^w : w\in \ol F^0\setminus \{ v\} \} $, sends all the vertices of $H$ to $0$
 and satisfies
 $$\theta (\alpha _i)=  t^v_i,\quad (i=1,\dots ,r) \qquad \text{and} \qquad \theta (t_l^v) = t_{l+r}^v \quad (l\in \N) .$$ 
 If $\beta$ is a connector in $\ol F$ with $r(\beta ) = v$ then we have 
$$\theta (t_l^{s(\beta)} \beta) = t_l^{s(\beta)} \beta = \beta t_{l+r}^v= \theta (\beta t_l^v) $$ 
so the relation $t_l^{s(\beta)} \beta = \beta t_l^{v} $ in $Q_K(F,D,\sigma)$ is preserved by $\theta $.  
It is easily seen that the kernel of the map $\theta $ is precisely $\mathcal H$ (by defining a suitable inverse map $Q_K(\ol F, \ol D, \ol \sigma) \to Q_K(F,D,\sigma )/\mathcal H)$), so that we get a
short exact sequence 

\begin{equation}
\label{eq:sesH}
0 \longrightarrow \mathcal H \longrightarrow Q_K(F,D,\sigma) \overset{\theta}{\longrightarrow} Q_K(\ol F ,\ol D ,\ol \sigma ) \longrightarrow 0.
\end{equation}

In order to ease notation, from now on, denote $Q:=Q_K(F,D,\sigma )$, $Q_i:=Q_K(F_i, D^i, \sigma _i)$ and $\ol Q:= Q_K(\ol F, \ol D, \ol \sigma )$.  

For $1\le i \le r$, define $\theta _i \colon Q \to Q_i$ which is the identity on the generators $F_i^0\cup F_i^1\cup (F_i^1)^*\cup \{ t_l^w : w\in F_i^0\setminus \{ v \} \}$, sends 
$H_j$ to $0$ for $j\ne i$,
and satisfies 
$$\theta _i (\alpha _j) = t^v_{\sigma^v _i (j)} \quad (j\ne i),\qquad \theta _i (t_l^v) = t^v_{l+r-1} \quad (l\in \N).$$
(See Notation \ref{not:sigmas}(1) for the definition of $\sigma^v_i$.)
Note that $\theta _i(\alpha_i) = \alpha_i$. Let us check that $\theta _i$ is a well-defined homomorphism. The only critical points are the relations of type \ref{pt:KeyDefs}(2)(ii),(iii) at $v$ for $Q$ and $Q_i$ respectively. 
Suppose first that we have a connector $\beta$ in $F$ such that $r(\beta ) = v$. Then, since $r(\beta ) \in J'$, the relations for the connector $\beta $ are the ones prescribed
in \ref{pt:KeyDefs}(2)(ii),(iii) for the separated graph $(F,D)$. But, since $s(\beta )\in F^0\setminus J'$ we have $|D_{s(\beta)}|= 1$ and so $t_l^{s(\beta)}\beta= \beta t_l^v$ for all $l\in \N$.
In the algebra $Q_i$, however, we have that $r(\beta ) = v\in F_i^0\setminus J$ and thus we get the modified relation $t_l^{s(\beta )} \beta = \beta t_{l+r-1}^v$ . 
Hence, we compute
$$\theta _i (t^{s(\beta)}_l \beta) = t^{s(\beta)}_l \beta = \beta t^v_{l+r-1} = \theta _i (\beta t^v_l),$$
and we see that the relation $t_l^{s(\beta)}\beta= \beta t_l^v$ is preserved by $\theta_i$. 

Now consider a connector $\beta \in X_i$ (so that $s(\beta ) = v$). In the algebra $Q$, we have that $v\in J'$ and so the relation for $\beta $ is the one prescribed by \ref{pt:KeyDefs}(2)(iii) for the separated graph $(F,D)$.
Since $|D_v| = r$ we get $t^v_l \beta = \beta t^{r(\beta )}_{l+r-1}$. Taking into account that $r(\beta) \in J$ and that $D^i_v = \{ X_i \}$, we have in the algebra $Q_i$ that
$t^v_l \beta = \beta t^{r(\beta)}_l$ for all $l\in \N$. Hence we get
$$\theta_i (t^v_l \beta) = t^v_{l+r-1} \beta = \beta t^{r(\beta)}_{l+r-1} = \theta _i ( \beta t^{r(\beta)}_{l+r-1})$$
and the relation  $t^v_l \beta = \beta t^{r(\beta )}_{l+r-1}$ is preserved by $\theta_i$. 

One can similarly show that the relation:
$$\alpha_j \beta = \beta t_{\sigma^v_i(j)}^{r(\beta )} \quad  \text{for} \quad \beta \in X_i \quad \text{and}\quad  j\ne i ,$$
which is valid in $Q$, is preserved by $\theta_i$. 

Now we define similar maps $\rho_i \colon Q_i \to \ol Q$, for $1\le i \le r$. Here we send all   
 generators $\ol F^0 \cup \ol F^1\cup (\ol F ^1)^*
 \cup \{ t_l^w : w\in \ol F^0\setminus \{ v\} \} $ to the corresponding generators in $\ol Q$, we send all the vertices in $H_i$ to $0$, and we let
 $$\rho_i (\alpha _i)=  t^v_i,\quad  \text{and} \qquad \rho_i (t_l^v) = \left\{\begin{array}{ll}
t_l^v& \text{ if }l<i\\
t^v_{l+1}& \text{ if }l\geq i\\
\end{array}\right .$$ 
Then $\rho_i$ is a well-defined surjective homomorphism with kernel $\mathcal H_i$, so that we get a short exact sequence 
\begin{equation}
\label{eq:sesHi}
0 \longrightarrow \mathcal H _i\longrightarrow Q_i \overset{\rho_i}{\longrightarrow} \ol Q \longrightarrow 0.
\end{equation}
Moreover, we have $\rho_i\circ \theta_i = \theta $ for all $i\in \{1,\dots , r\}$.

In the following Proposition, we show that the maps $\theta_i$ are the limit (pullback) of the maps $\rho_i$.
%
%

\begin{proposition}
\label{prop:QisPullbackofQis}
With the above notation, we have that the family 
	of maps $$\{\theta_i:Q\to Q_i\mid i=1,\ldots, r\}$$ is the limit (in the category of $K$-algebras) of the system of maps $$\{\rho_i:Q_i\to \overline Q\}.$$
   \end{proposition}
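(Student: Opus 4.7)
The plan is to verify that the canonical map $\theta \colon Q \to P$, where $P = \{(x_1,\dots,x_r)\in \prod_i Q_i : \rho_i(x_i)=\rho_j(x_j) \text{ for all } i,j\}$ and $\theta(x)=(\theta_1(x),\dots,\theta_r(x))$, is a $K$-algebra isomorphism. The argument runs parallel to the monoid case treated in Theorem \ref{thm:monoid-limit}, but the additive structure of rings replaces the refinement-matrix step by an honest lift along an ideal.

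The foundational observation is the decomposition $\mathcal{H}=\bigoplus_{i=1}^{r}\mathcal{H}_i$ (Proposition \ref{prop:injective-map-lattice ideals} applied to the mutually disjoint hereditary $D$-saturated subsets $H_i$, disjointness from Lemma \ref{lem:tildeTF}), together with the fact that for each $i$ the restriction of $\theta_i$ to the ideal $\mathcal{H}_i \subseteq Q$ is an isomorphism onto $\mathcal{H}_i\subseteq Q_i$. Indeed, by Lemma \ref{lem:tildeTF} the restrictions of $(F,D)$ and of $(F_i,D^i)$ to $H_i$ coincide; moreover, since $v$ is minimal in $\Ifree\setminus J$, every free prime lying inside $H_i$ sits in $J\subseteq J'$, so for every connector internal to $H_i$ neither the $\sigma$-twist nor the $\sigma_i$-twist is activated, and the two families of defining relations agree verbatim. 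Hence both copies of $\mathcal{H}_i$ are canonically isomorphic to the algebra attached to the hereditary subgraph on $H_i$, and $\theta_i$ identifies them.

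For injectivity, suppose $\theta(x)=0$. Then $\rho_i\theta_i(x)=0$ for every $i$, so $x\in \ker(\theta)=\mathcal{H}=\bigoplus_j\mathcal{H}_j$. Writing $x=\sum_j x_j$ with $x_j\in\mathcal{H}_j$ and using that $\theta_i$ vanishes on every $\mathcal{H}_j$ with $j\ne i$ (it kills the vertices of $H_j$), we get $0=\theta_i(x)=\theta_i(x_i)$; since $\theta_i|_{\mathcal{H}_i}$ is an isomorphism, $x_i=0$ for each $i$, whence $x=0$. For surjectivity, given $(x_1,\dots,x_r)\in P$, first use the (routine to verify) surjectivity of $\theta_1$ to pick $y_1\in Q$ with $\theta_1(y_1)=x_1$. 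For $i\ge 2$ the element $z_i:=x_i-\theta_i(y_1)\in Q_i$ satisfies
\[
\rho_i(z_i)=\rho_i(x_i)-\rho_i\theta_i(y_1)=\rho_i(x_i)-\rho_1(x_1)=0,
\]
hence lies in $\mathcal{H}_i\subseteq Q_i$; lift it to $\tilde z_i\in \mathcal{H}_i\subseteq Q$ via $\theta_i|_{\mathcal{H}_i}$. Setting $y:=y_1+\sum_{i=2}^{r}\tilde z_i$ and invoking the orthogonality $\theta_i(\mathcal{H}_j)=0$ for $j\ne i$ once more, a direct computation shows $\theta_i(y)=x_i$ for every $i$.

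The only genuinely delicate point I foresee is establishing cleanly that $\theta_i|_{\mathcal{H}_i}$ is an isomorphism, i.e., tracking that the two presentations of this ideal (as a subobject of $Q$ and of $Q_i$) really do match on the nose once the $\sigma$-twists are examined connector by connector. Once this identification is set up, both injectivity and surjectivity of $\theta$ reduce to the standard pullback-with-common-kernel diagram chase outlined above.
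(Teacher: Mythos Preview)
Your argument is correct and rests on the same ingredients as the paper's: the short exact sequence \eqref{eq:sesH}, the decomposition $\mathcal H=\bigoplus_i\mathcal H_i$, and the identification of $\mathcal H_i\subseteq Q$ with $\mathcal H_i\subseteq Q_i$ via $\theta_i$. The paper simply packages your element chase as a Five Lemma application: it observes that $\prod_i\mathcal H_i$ is an ideal of $P$ with $P/\prod_i\mathcal H_i\cong\overline Q$, writes the commutative diagram with exact rows
\[
\begin{array}{ccccccccc}
0&\to&\mathcal H&\to&Q&\to&\overline Q&\to&0\\
&&\|&&\downarrow&&\|&&\\
0&\to&\prod_i\mathcal H_i&\to&P&\to&\overline Q&\to&0
\end{array}
\]
and concludes. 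Your direct verification of injectivity and surjectivity is exactly the unpacking of that Five Lemma step, so the two approaches are equivalent.

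One small point of presentation: you overload the symbol $\theta$. In the paper $\theta\colon Q\to\overline Q$ is the surjection in \eqref{eq:sesH}, whereas you use $\theta$ for the comparison map $Q\to P$. Your sentence ``so $x\in\ker(\theta)=\mathcal H$'' is correct only if $\theta$ there means the map to $\overline Q$; with your own notation it would be circular. Renaming your comparison map (the paper calls it $\mu$) removes the ambiguity.
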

\begin{proof}
   Since $\theta = \rho_i\circ \theta_i$ for all $i$, the following diagram 
         \[
      \begin{tikzpicture}
      \matrix (m) [matrix of math nodes,row sep=1em,column sep=1em,minimum width=1em]
      {
      	Q & & & & \\
      	& Q_2   &Q_1&\\
      	& \ldots \ldots&&\\
      	& Q_r   &&\overline Q\\};
      \path[-stealth]
      (m-1-1) edge [bend left=40, dashed,->] node [below] {$\theta_1$} (m-2-3)
      edge [dashed,->] node [below] {$\theta_2$} (m-2-2)
      edge [bend right=40, dashed,->] node [below]  {$\theta_r$} (m-4-2)
      
      (m-2-3) edge node [right] {$\rho_1$} (m-4-4)
      (m-2-2) edge node [right] {$\rho_2$} (m-4-4)
      (m-4-2) edge node [below] {$\rho_r$} (m-4-4)       ;
      \end{tikzpicture}
      \]
   is commutative.  By the universal property of the pullback, denoted by $P$, there exists a unique morphism $\mu:Q\to P$ such that $\pi_i\circ \mu = \theta_i$ for $i=1,\dots ,r$, 
   where $\pi_i\colon P\to Q_i$ are the structural maps of the pullback, so that $\rho_i\circ \pi_i= \rho_j\circ \pi_j$ for all $i,j$. 
   For $r=2$ we can visualize the situation  in the following diagram:  
    \[
\begin{tikzpicture}
  \matrix (m) [matrix of math nodes,row sep=1em,column sep=1em,minimum width=1em]
  {
     Q& & & & \\
     & P   &Q_1&\\
       & Q_2 &\overline Q\\};
  \path[-stealth]
    (m-1-1) edge [bend left=40, dashed,->] node  [above] {$ \theta_1$} (m-2-3)
            edge [->] node [below] {$\mu$} (m-2-2)
            edge [bend right=40, dashed,->] node  [below] {$ \theta_2$} (m-3-2)

    (m-2-3) edge node [right] {$\rho_1$} (m-3-3)
         (m-2-2) edge node [above] {$\pi_1$} (m-2-3)
         (m-2-2) edge node [right] {$\pi_2$} (m-3-2)
             (m-3-2) edge node [below] {$\rho_2$} (m-3-3)       ;
\end{tikzpicture}
\]
Let $\pi\colon P\to \ol Q$ denote the composition $\rho_i\circ \pi_i$ (which does not depend on $i$).   
By the usual definition of the pullback, $$P:=\{(x_1,\dots ,x_r)\in \prod_{i=1}^r Q_i\mid \rho_i(x_i)=\rho_j(x_j) \forall i,j\},$$ and one has that 
 $\mathcal H_1\times \cdots \times \mathcal H_r$ is an ideal in $P$ and the quotient algebra is isomorphic to $\ol Q$. By using this observation and the exact sequence (\ref{eq:sesH}), we can build the following commutative 
 diagram with exact rows:
\[
\begin{tikzpicture}
  \matrix (m) [matrix of math nodes,row sep=3em,column sep=4em,minimum width=2em]
  {
     0&\mathcal H&Q &\overline Q&0\\
      0& \prod_{i=1}^r\mathcal H_i & P&\overline Q&0 \\};

  \path[-stealth]

    (m-1-1) edge [->] node  {} (m-1-2)
    (m-1-2) edge [ -> ]node [right] {$=$} (m-2-2)
    (m-1-2) edge [->] node  {} (m-1-3)
    (m-1-3) edge [->] node   [above] {$\theta$} (m-1-4)
        edge node [right] {$\mu$} (m-2-3)
    (m-1-4) edge [->] node  {} (m-1-5)
        edge [ -> ]node [right] {$=$} (m-2-4)
      (m-2-1) edge [->] node  {} (m-2-2)

    (m-2-2) edge [->] node  {} (m-2-3)
     (m-2-3) edge [->] node  [above] {$\pi$} (m-2-4)
     (m-2-4) edge [->] node  {}(m-2-5);

\end{tikzpicture}
\]
Therefore, we have that $\mu$ is an isomorphism by the Five's Lemma. This shows the result.  
\end{proof}

\subsection{The functor $\mathcal V$ on pullbacks}
In this last part of the section we will study the behaviour of the described pullbacks under the functor $\mathcal V(\_)$. This will enable us to provide the proof of Theorem \ref{thm:main-for-F}.

 Our main tool here is \cite[Theorem 3.2]{Ara10}, which for our purposes we state in the following way:

\begin{theorem}
\label{thm:K1Pullbacktheorem}
    Let $Q_1,\ldots,Q_r$ be separative von Neumann regular rings, and let $\rho_i : Q_i\to \overline Q$ be surjective homomorphisms. 
    Let $\theta_i:P\to Q_i$ be the limit (pullback) of the morphisms $\rho_i:Q_i\to \overline Q$. 
    Then, P is a separative von Neumann regular ring, and the maps $\mathcal V(\theta_i):\mathcal V(P)\to \mathcal V(Q_i)$ are the limit of the family of maps $\mathcal V(\rho_i):\mathcal V(Q_i)\to \mathcal V(\overline Q)$ in 
    the category of 
    monoids if and only if for each idempotent $e=(e_1,\ldots, e_r)$ in $P$, we have that for $i=1,\ldots, r$,
    $$K_1(\rho_i(e_i)\overline Q\rho_i(e_i))=(\rho_i)_*(K_1(e_iQ_ie_i))+\Big( \bigcap_{j\neq i}(\rho_j)_ *(K_1(e_jQ_je_j)) \Big).$$ 
\end{theorem}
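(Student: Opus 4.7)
The plan is to build the theorem around the canonical comparison map
\[
\varphi\colon \mathcal V(P)\longrightarrow L:= \mathcal V(Q_1)\times_{\mathcal V(\overline Q)}\cdots\times_{\mathcal V(\overline Q)}\mathcal V(Q_r),
\]
induced by the family $(\mathcal V(\theta_i))_{i=1}^r$. The assertion amounts to showing that $P$ is separative von Neumann regular, and that $\varphi$ is an isomorphism if and only if the stated $K_1$ identity holds at every idempotent $e=(e_1,\dots,e_r)\in M_\infty(P)$.

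\textbf{Step 1 (Regularity and separativity of $P$).} For $x=(x_1,\ldots,x_r)\in P$ with common image $\bar x\in\overline Q$, I would first choose a reflexive quasi-inverse $\bar y$ of $\bar x$ in the regular ring $\overline Q$, and then, using surjectivity of each $\rho_i$ together with the separativity of $Q_i$ to adjust quasi-inverses, lift $\bar y$ to elements $y_i\in Q_i$ with $\rho_i(y_i)=\bar y$ and $x_iy_ix_i=x_i$. The tuple $(y_1,\dots,y_r)$ then quasi-inverts $x$ in $P$, so $P$ is regular. For separativity, any instance $a+c=b+c$ with $c\le na,\,nb$ in $\mathcal V(P)$ pushes to the same relation in each $\mathcal V(Q_i)$, where separativity of $Q_i$ gives $\theta_i(a)=\theta_i(b)$; a standard refinement argument inside $P$ then upgrades this to $a=b$ (appealing to the exchange property of the regular ring $P$).

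\textbf{Step 2 (Reduction to corners).} Since $\mathcal V(P)=\bigcup_e\mathcal V(ePe)$ as $e$ ranges over idempotents in $M_\infty(P)$, and analogously $L=\bigcup_e L_e$ where $L_e$ is the corresponding corner pullback, the map $\varphi$ is an isomorphism precisely when each restriction $\varphi_e\colon \mathcal V(ePe)\to L_e$ is so. It is therefore enough to analyse $\varphi_e$ for each $e=(e_1,\ldots,e_r)\in M_\infty(P)$ idempotent. Surjectivity of $\varphi_e$ is formal: a compatible family of classes in $\mathcal V(e_iQ_ie_i)$ may be represented by idempotents whose images in $\rho(e)\overline Q\rho(e)$ agree on the nose (apply conjugation of equivalent idempotents in the regular ring $\overline Q$, after stabilising), and these glue to an idempotent sitting in a corner of $ePe$. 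This step uses no $K_1$ information.

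\textbf{Step 3 (Injectivity of $\varphi_e$ via Mayer--Vietoris).} The heart of the argument, and the main obstacle, is to identify the kernel of $\varphi_e$ with an obstruction measured in $K_1$. I would exploit the $r$-fold generalisation of Milnor's exact sequence for the pullback $ePe=e_1Q_1e_1\times_{\rho(e)\overline Q\rho(e)}\cdots\times_{\rho(e)\overline Q\rho(e)} e_rQ_re_r$,
\[
\prod_{i=1}^r K_1(e_iQ_ie_i)\xrightarrow{\Delta} \bigl(K_1(\rho(e)\overline Q\rho(e))\bigr)^{r}\!\big/\text{diag}\xrightarrow{\partial} K_0(ePe)\xrightarrow{\gamma}\prod_{i=1}^r K_0(e_iQ_ie_i),
\]
where $\Delta$ is induced by the maps $(\rho_i)_*$ modulo the diagonal. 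Two classes in $\mathcal V(ePe)$ with equal images in every $\mathcal V(e_iQ_ie_i)$ differ by an element of $\ker\gamma=\mathrm{im}\,\partial$, which vanishes if and only if $\Delta$ is surjective. Separativity, established in Step~1, licenses the passage from equality in $K_0$ back to equality in the positive cone $\mathcal V$.

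\textbf{Step 4 (Reformulating surjectivity of $\Delta$).} It remains to check that surjectivity of $\Delta$ is equivalent to the stated sum-plus-intersection identity. Unwinding, surjectivity of $\Delta$ says: for every tuple $(d_i)_i\in K_1(\rho(e)\overline Q\rho(e))^r$, there exists a common element $\epsilon$ with $d_i-\epsilon\in (\rho_i)_*(K_1(e_iQ_ie_i))$ for all $i$. Setting $d_j=0$ for $j\ne i$ and $d_i$ arbitrary forces $\epsilon\in\bigcap_{j\ne i}(\rho_j)_*(K_1(e_jQ_je_j))$ with $d_i\in (\rho_i)_*(K_1(e_iQ_ie_i))+\epsilon$; so surjectivity of $\Delta$ forces the stated identity at each $i$. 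Conversely, given the identity at $i=1$ one produces $\epsilon$ handling coordinate $1$, and a telescoping argument using the identities at the remaining indices successively reduces the problem to zero. The anticipated technical difficulty is precisely the careful setup of the $r$-fold Mayer--Vietoris sequence and the bookkeeping for this telescoping; everything else is routine once separativity and the reduction to corners are in hand.
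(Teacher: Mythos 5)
First, a point of comparison: the paper does not prove this statement at all --- it is imported, in adapted form, from \cite[Theorem 3.2]{Ara10} --- so your proposal has to be measured against the proof in that reference rather than against anything internal to this paper. Your overall architecture (regularity and separativity of $P$, Milnor patching for surjectivity, a Mayer--Vietoris $K_1$--$K_0$ obstruction for injectivity) is the right one and is close in spirit to the argument in \cite{Ara10}. The purely group-theoretic equivalence in Step 4 between surjectivity of $\Delta$ and the sum-plus-intersection identity is correct.

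There are, however, genuine gaps. The clearest one is that your separativity argument in Step 1 is circular: from $\theta_i(a)=\theta_i(b)$ for all $i$ you cannot conclude $a=b$ in $\mathcal V(P)$ by ``a standard refinement argument'', because injectivity of $\mathcal V(P)\to\prod_i\mathcal V(Q_i)$ is precisely (half of) the conclusion of the theorem and holds only under the $K_1$ hypothesis. The correct route is that regularity and separativity of exchange rings are closed under extensions (\cite{AGOP}), applied to the ideal $J=\prod_i\ker\rho_i$ of $P$ (regular and separative as an ideal of $\prod_iQ_i$) with quotient $P/J\cong\overline Q$ (separative as a quotient of $Q_1$). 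Second, Steps 3--4 work at the level of $K_0$ and only afterwards invoke separativity to return to $\mathcal V$, but this round trip is exactly where the content lies and is left unargued: from $[p]=[q]$ in $K_0(ePe)$ you only get $p\oplus c\sim q\oplus c$ for some $c$ with no control on its trace ideal, whereas separative cancellation needs $c\le np$ and $c\le mq$; and in the converse direction, passing from injectivity of the monoid map to injectivity of $K_0(ePe)\to\prod_iK_0(e_iQ_ie_i)$ requires an argument (it works here because every element of $\prod_i\mathcal V(e_iQ_ie_i)$ is dominated by a multiple of the image of $[e]$, but you do not supply it). Finally, the $r$-fold Mayer--Vietoris sequence is asserted rather than established. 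The proof in \cite{Ara10} sidesteps the $K_0$-to-$\mathcal V$ translation by working directly with the $\mathcal V$-monoids, using the results of \cite{AGOP} on separative exchange rings to describe the fibres of $\mathcal V(e_iQ_ie_i)\to\mathcal V(\rho_i(e_i)\overline Q\rho_i(e_i))$ in terms of $K_1$; rebuilding your Steps 3--4 on that machinery would close these gaps.
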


In order to accomplish our goal, we will need the following. Note the essential use of property {\rm (F)} in its proof, and recall that $G(\_)$ stands for the Grothendieck group construction explained in Section \ref{Section1}. 

\begin{lemma}
 \label{lem:kernelOK}
 Assume Notation \ref{not:NotationPullbacks}, and fix $i\in \{ 1,\dots , r \}$. Let $M(H_i)$ be the order-ideal of $M(F_i,D^i)$ generated by $H_i$. 
 Then $M(H_i)$ is canonically isomorphic to $M((F_i)|_{H_i}, (D^i)^{H_i})$, the monoid associated to the restriction of $(F_i,D^i)$ to
 $H_i$ (Lemma \ref{lem:restriction-monoids}), and the kernel of the natural map
 $$G(M(H_i)) \longrightarrow G(M(F_i,D^i))$$
 is the cyclic subgroup of $G(M(H_i))$ generated by $ \sum _{\beta \in X_i'} x_{r(\beta)}$, where we denote by $x_w$ the generators of $G(M(H_i))$, and $X_i'= X_i\setminus \{\alpha_i\}$ is the family of connectors of $X_i$.  
 \end{lemma}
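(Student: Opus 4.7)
The plan is to dispatch the first assertion immediately by invoking Lemma~\ref{lem:restriction-monoids} applied to $(F_i, D^i)$ with its hereditary, $D^i$-saturated subset $H_i$; this gives $M(H_i) \cong M((F_i)|_{H_i}, (D^i)^{H_i})$ with no further work. For the kernel computation I will work at the level of presentations. Let $G_{F_i}$ and $G_{H_i}$ denote the free abelian groups on $F_i^0$ and $H_i$ respectively, so $G_{F_i} = G_{H_i} \oplus G_{F_i^0 \setminus H_i}$, and let $R_{F_i} \subseteq G_{F_i}$ (resp.\ $R_{H_i} \subseteq G_{H_i}$) be the subgroup generated by the defining relations $r_{w,X} := a_w - \sum_{e \in X} a_{r(e)}$, taken over $w$ not a sink of $F_i$ and $X \in D^i_w$ (resp.\ the analogous relations for $M(H_i)$). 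Then $G(M(F_i,D^i)) = G_{F_i}/R_{F_i}$ and $G(M(H_i)) = G_{H_i}/R_{H_i}$, the map in question is induced by the inclusion $G_{H_i} \hookrightarrow G_{F_i}$, and its kernel equals $(R_{F_i} \cap G_{H_i})/R_{H_i}$.

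The core step will be to use condition~{\rm (F)} on $(F_i,D^i)$ together with Lemma~\ref{lem:tildeEforest} (which says $I_{F_i}$ is a forest) to show that $v$ is the unique vertex in $F_i^0 \setminus H_i$ emitting edges into $H_i$, and that these edges are exactly $X_i'$. Indeed, by definition $H_i/{\sim}$ consists of those components strictly below $[v]$ in $I_{F_i}$; since intervals in a forest are chains, the predecessor of a maximal element of $H_i/{\sim}$ (in the sense of condition~{\rm (F)}) must be $[v] = \{v\}$, while the predecessor of a non-maximal component of $H_i/{\sim}$ lies again in $H_i/{\sim}$. With this in hand, the defining relations of $R_{F_i}$ split into three types: those $r_{w,X}$ with $w \in H_i$ lie in $G_{H_i}$ and generate $R_{H_i}$; the single relation at $w = v$ reduces, thanks to the loop $\alpha_i \in X_i$, to $r_v = -\sum_{\beta \in X_i'} a_{r(\beta)} \in G_{H_i}$; and the $r_{w,X}$ with $w \in F_i^0 \setminus (H_i \cup \{v\})$ all land in $G_{F_i^0 \setminus H_i}$, since their range vertices stay outside $H_i$.

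Consequently $R_{F_i} = R_{H_i} + \Z\, r_v + R'$ for some $R' \subseteq G_{F_i^0 \setminus H_i}$, and intersecting with $G_{H_i}$ (using $G_{H_i} \cap G_{F_i^0 \setminus H_i} = 0$) yields $R_{F_i} \cap G_{H_i} = R_{H_i} + \Z\, r_v$. The kernel is therefore the cyclic subgroup of $G(M(H_i))$ generated by the image of $r_v$, which is (up to sign) $\sum_{\beta \in X_i'} x_{r(\beta)}$, as required. The principal obstacle is the predecessor analysis in the second paragraph: pinning $v$ down as the sole source of edges from outside $H_i$ into $H_i$ via condition~{\rm (F)} is precisely what makes the relation subgroup decompose so cleanly into the three disjoint pieces above.
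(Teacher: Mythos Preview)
Your proof is correct and follows essentially the same approach as the paper: both arguments pass to the free abelian group presentations $G_{H_i}/R_{H_i}$ and $G_{F_i}/R_{F_i}$, use condition~{\rm (F)} to verify that $v$ is the unique vertex of $F_i^0\setminus H_i$ emitting edges into $H_i$, and thereby split the relation subgroup $R_{F_i}$ into a piece supported on $H_i$ (namely $R_{H_i}$ together with the single relation at $v$, which collapses to $-\sum_{\beta\in X_i'} a_{r(\beta)}$ thanks to the loop $\alpha_i$) and a piece supported entirely on $F_i^0\setminus H_i$. Your predecessor analysis via Lemma~\ref{lem:tildeEforest} is slightly more explicit than the paper's, but the content is identical.
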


\begin{proof}
 By Lemma \ref{lem:restriction-monoids}, we have $M(H_i) \cong M((F_i)|_{H_i}, (D^i)^{H_i})$. 
 
 Let $G$ be the free abelian group on $F_i^0$. We write $G=G_1\oplus G_2$, where $G_1$ is the free group on $H_i$ and $G_2$ is the free group on $F_i^0\setminus H_i$. 
 Let $\pi \colon G\to G(M(F_i, D^i))$ be the natural projection map. Set $L:= \ker \pi $. 
 Then $L=L_1\oplus L_2$, where $L_1\subseteq G_1$ and $L_2\subseteq G_2$. 
 Indeed $L_1$ is the subgroup of $G_1$ generated by all the elements of the form $w- \sum_{e\in X} r(e)$ for $X\in C_w$ and $w\in H_i$ and the element $x := \sum _{\beta \in X_i'} r(\beta )$, and $L_2$
 is generated by all the elements of the form $d_X = w- \sum_{e\in X} r(e)$ for $X\in D^i_w$ and $w\nleq v$. Thus it suffices to observe that all elements $d_X$ as above 
 have their support completely contained in $F_i^0\setminus H_i$. 
 But this follows because our graph satisfies condition {\rm (F)}, so the only vertex in the set $H_i\cup \{v\}$ that can receive an edge emitted by a vertex in $E^0\setminus (H_i\cup \{v\})$ is the vertex $v$, and so the elements $d_X$ defined above
 are completely supported on the generators of $G_2$. 
 
 On the other hand, we have
 $$G(M(H_i)) =  G(M((F_i)|_{H_i}, (D^i)^{H_i})) = G_1/L_3 ,$$
 where $L_3$ is the subgroup of $G_1$ generated by all the elements of the form $w- \sum_{e\in X} r(e)$ for $X\in C_w$ and $w\in H_i$, thus $L_3\subseteq L_1$ and the map 
 $$G_1/L_3 =G(M(H_i)) \to G/L=G(M(F_i,D^i))$$ factors 
 as follows 
 $$ G_1/L_3 \longrightarrow G_1/L_1 \longrightarrow G_1/L_1\oplus G_2/L_2 = G/L $$
 thus the kernel of this map is precisely the element $ \sum _{\beta \in X_i'} x_{r(\beta)}$. 
\end{proof}

We next show that the $K_1$ statement in Theorem \ref{thm:K1Pullbacktheorem} holds in our situation.

\begin{proposition}
 \label{prop:conditionforK1}
    Under Notation \ref{not:NotationPullbacks} and the above notation, assume that $Q_i$ are separative regular rings and that the natural maps 
    $M(F_i,D^i)\to \mathcal V (Q_i)$ are isomorphisms for $i=1,\dots ,r$. Then, for each idempotent $(e_1,\dots , e_r) $ in $P\cong Q$ and each $1\le i \le r$ we have:
    $$ K_1(\rho_i(e_i)\overline Q\rho_i(e_i))=(\rho_i)_*(K_1(e_iQ_ie_i))+ \Big( \bigcap_{j\neq i}(\rho_j)_ *(K_1(e_jQ_je_j))\Big) .$$
Moreover, $Q=Q_K(F,D,\sigma)$ is a separative regular ring, and the natural map $M(F,D) \to \mathcal V (Q)$ is an isomorphism. 
\end{proposition}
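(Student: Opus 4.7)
The plan is to verify the $K_1$-condition of Theorem \ref{thm:K1Pullbacktheorem} for the pullback diagram established in Proposition \ref{prop:QisPullbackofQis}. Once this is done, Theorem \ref{thm:K1Pullbacktheorem} yields both that $Q$ is a separative von Neumann regular ring and that $\mathcal V(Q)$ is the pullback of the maps $\mathcal V(\rho_i)\colon \mathcal V(Q_i)\to \mathcal V(\ol{Q})$. By Proposition \ref{prop:basicpropsof-VR}(3) applied to \eqref{eq:sesHi}, $\mathcal V(\ol{Q})$ is identified with $M(F_i,D^i)/M(H_i)=\ol{M}$ under the given isomorphisms $M(F_i,D^i)\cong \mathcal V(Q_i)$. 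Combining this with Theorem \ref{thm:monoid-limit}, the pullback coincides with $M(F,D)$, and by naturality the resulting isomorphism is the canonical one.

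For the $K_1$-condition, fix an idempotent $(e_1,\ldots,e_r)\in Q$ and set $\ol{e}:=\rho_i(e_i)\in \ol{Q}$ (common to all $i$). The $K$-theory exact sequence applied to the corner short exact sequence $0\to e_i\mathcal H_i e_i\to e_iQ_ie_i\xrightarrow{\rho_i} \ol{e}\,\ol{Q}\,\ol{e}\to 0$ yields
\[
K_1(e_iQ_ie_i)\xrightarrow{(\rho_i)_*} K_1(\ol{e}\,\ol{Q}\,\ol{e})\xrightarrow{\partial_i} K_0(e_i\mathcal H_ie_i)\to K_0(e_iQ_ie_i),
\]
so the cokernel of $(\rho_i)_*$ equals the kernel of $K_0(e_i\mathcal H_ie_i)\to K_0(e_iQ_ie_i)$. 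Using the assumed isomorphism $\mathcal V(Q_i)\cong M(F_i,D^i)$ together with Proposition \ref{prop:basicpropsof-VR}(2) to identify $\mathcal V(e_i\mathcal H_ie_i)$ with the appropriate order-ideal of $M(H_i)$, this kernel is a corner version of the kernel computed in Lemma \ref{lem:kernelOK}, generated by copies of $\sum_{\beta\in X_i'}[r(\beta)]$, one per occurrence of the vertex $v$ in $\ol{e}$.

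The crux is to exhibit an explicit preimage of this generator under $\partial_i$. The element $\alpha_i\in vQ_iv$ satisfies $\alpha_i^*\alpha_i=v$ and $\alpha_i\alpha_i^*=v-\sum_{\beta\in X_i'}\beta\beta^*$, and $\rho_i(\alpha_i)=t^v_i$ is a unit in $v\,\ol{Q}\,v$. The standard formula for the index map thus gives
\[
\partial_i\bigl([t^v_i]\bigr)=[v]-\bigl[v-\textstyle\sum_{\beta\in X_i'}\beta\beta^*\bigr]=\sum_{\beta\in X_i'}[r(\beta)],
\]
so $K_1(\ol{e}\,\ol{Q}\,\ol{e})$ is generated by $(\rho_i)_*(K_1(e_iQ_ie_i))$ together with the classes $[t^v_i]$, one per copy of $v$ in $\ol{e}$. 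It remains to show that $[t^v_i]\in (\rho_j)_*(K_1(e_jQ_je_j))$ for every $j\ne i$. Recalling that $\rho_j(t^v_l)=t^v_l$ if $l<j$ and $\rho_j(t^v_l)=t^v_{l+1}$ if $l\ge j$, set $u_j:=t^v_i$ when $j>i$ and $u_j:=t^v_{i-1}$ when $j<i$; in both cases $u_j$ is a unit in $vQ_jv$ with $\rho_j(u_j)=t^v_i$, hence $[t^v_i]\in (\rho_j)_*(K_1(e_jQ_je_j))$ for all $j\ne i$. This establishes the $K_1$-equation demanded by Theorem \ref{thm:K1Pullbacktheorem}.

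The main technical obstacle will lie in rigorously extending the global cokernel computation of Lemma \ref{lem:kernelOK} to arbitrary corners $e_iQ_ie_i$. This requires identifying $\mathcal V(e_iQ_ie_i)$ and $\mathcal V(e_i\mathcal H_ie_i)$ with the order-ideals of $M(F_i,D^i)$ and $M(H_i)$ generated by $[e_i]$, together with a careful bookkeeping of how the vertex $v$ appears inside $\ol{e}$, so that the preimage computation above produces generators of the cokernel in the generality required.
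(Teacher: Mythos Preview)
Your approach is essentially the same as the paper's: both argue via the $K$-theory exact sequence for the corner, invoke Lemma \ref{lem:kernelOK} to identify the cokernel of $(\rho_i)_*$, compute $\partial_i([t_i^v])=\sum_{\beta\in X_i'}[\beta\beta^*]$, and then observe that $[t_i^v]=(\rho_j)_*([t_i^v])$ or $(\rho_j)_*([t_{i-1}^v])$ for $j\ne i$.

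The one place where you leave a gap is precisely the ``technical obstacle'' you flag at the end, and the paper's resolution is cleaner than the bookkeeping you anticipate. Rather than tracking ``copies of $v$ inside $\ol e$'', the paper uses that $Q_i$ is regular with $\mathcal V(Q_i)\cong M(F_i,D^i)$ to obtain a lattice isomorphism $\mathcal L(Q_i)\cong \mathcal L(I_i)$, where $I_i=F_i^0/{\sim}$. Thus the ideal $Q_ie_iQ_i$ corresponds to a lower subset $I_i'\subseteq I_i$, and there is a simple dichotomy: if $v\notin I_i'$ then the map $K_0(e_i\mathcal H_ie_i)\to K_0(e_iQ_ie_i)$ is already injective (so there is nothing to do), while if $v\in I_i'$ then $\mathcal H_ie_i\mathcal H_i$ corresponds to the \emph{full} lower subset $H_i/{\sim}$, so $K_0(e_i\mathcal H_ie_i)\cong G(M(H_i))$ and Lemma \ref{lem:kernelOK} applies verbatim, giving a cyclic kernel with a single generator. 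No ``per occurrence'' multiplicity arises.
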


\begin{proof}
Let $(e_1,\dots , e_r)$ be an idempotent in $P\cong Q$. Then each $e_i$ is an idempotent in $Q_i=Q_K(F_i,D^i,\sigma_i)$ and $\ol e = \rho_i(e_i)$ for all $i=1,\dots , r$.

We want to analyze the following exact sequence in $K$-theory, for a given $i\in \{1,\dots , r \}$:
$$K_1(e_iQ_ie_i) \overset{(\rho_i)_*}\longrightarrow K_1(\ol e \ol Q \ol e) \overset{\partial_i}{\longrightarrow} K_0(e_i\mathcal H _i e_i) \overset{\iota_i}\longrightarrow K_0(e_iQ_ie_i) .$$

Since $Q_i$ is regular by hypothesis, we have $\mathcal L (Q_i) \cong \mathcal L (\mathcal V (Q_i))$ by Proposition \ref{prop:basicpropsof-VR}(2). In addition we have $\mathcal V (Q_i) \cong M(F_i,D^i)$ also by hypothesis, therefore we get 
an isomorphism
$$ \mathcal L (Q_i) \cong \mathcal L (\mathcal V (Q_i)) \cong \mathcal L (M(F_i,D^i)) \cong \mathcal L (I_i) , $$
where $\mathcal L (I_i)$ is the lattice of lower subsets of $I_i:= F_i^0/{\sim}$, the partially ordered set associated to the pre-ordered set $(F_i^0,\leq)$ with respect to the path-way pre-order 
(use \cite[Proposition 2.9]{ABP} and \cite[Proposition 1.9]{AP16}).

Hence, there is a lower subset $I_i'$ of $I_i$ such that the ideal $Q_ie_iQ_i$ of $Q_i$ corresponds to $I_i'$. Similarly, the ideal $\mathcal H_i e_i \mathcal H_i = Q_ie_iQ_i  \cap \mathcal H_i$
corresponds to a lower subset $I_i''$ of $I_i$ such that $I_i''\subseteq I_i'$. 
We let $M_i'$ (respectively $M_i''$) denote the order-ideal of $M(F_i,D^i)$ generated by the hereditary subsets $H_{I_i'}$ and $H_{I_i''}$ respectively.
(Recall that, for a lower subset $L$ of $I_i$, we denote by $H_L$ the hereditary subset of $F_i^0$ consisting of all the vertices $w\in F_i^0$ such that $[w]\in L$.)
Observe that, using \cite[Corollary 5.6]{G09}, we have 
$$\mathcal V (e_iQ_ie_i) = \mathcal V (Q_ie_iQ_i) \cong M_i',\qquad  \mathcal V (e_i\mathcal H_ie_i) = \mathcal V (\mathcal H_ie_i\mathcal H_i) \cong M_i''.$$  



We distinguish two cases. Suppose first that $v\notin I_i'$. Then the  map
$$K_0(e_i\mathcal H_i e_i) \longrightarrow K_0(e_i Q_ie_i),$$
which corresponds to the map
$$G(M_i'') \longrightarrow  G(M_i')$$
is injective. In this case, we get $K_1(\ol e \ol Q \ol e)= (\rho_i)_* (K_1(e_iQ_ie_i))$ and we are finish. 

Now assume that $v\in I_i '$. Then necessarily $I_i''= H_i/{\sim}$ and $M_i''=M(H_i)$. 
The map 
$$\iota _i \colon K_0(e_i\mathcal H_i e_i) \longrightarrow K_0(e_i Q_ie_i)$$
corresponds to the natural map
$$\eta \colon G(M(H_i))\longrightarrow  G(M_i') ,$$
and the element $\sum _{\beta \in X_i'} x_{r(\beta )} \in G(M(H_i))$ belongs to the kernel of $\eta$, where we are using the notation introduced in Lemma \ref{lem:kernelOK}.
On the other hand, the canonical map $G(M(H_i))\to G(M(F_i,D^i))$ considered in Lemma \ref{lem:kernelOK} factors through $\eta$, so that we conclude from that lemma that the kernel of
$\eta$ is precisely the cyclic subgroup of $G(M(H_i))$ generated by  $\sum _{\beta \in X_i'} x_{r(\beta )} $. Since  $\sum _{\beta \in X_i'} x_{r(\beta )} $ corresponds to 
$\sum_{\beta \in X_i'} [\beta \beta^*]$ under the isomorphism $K_0(e_i\mathcal H_i e_i)\cong G(M(H_i))$, we conclude that the kernel of $\iota_i$ is precisely the cyclic subgroup of 
$K_0(e_i\mathcal H_i e_i)=K_0(\mathcal H_i)$  generated by $\sum_{\beta \in X_i'} [\beta \beta^*]$.   

So the cokernel of the map $(\rho_i)_* \colon K_1(e_iQ_ie_i) \to K_1(\ol e \ol Q \ol e)$ is isomorphic to the cyclic subgroup generated by $\sum_{\beta \in X_i'} [\beta \beta^*]$, and since 
$$\partial_i ([t_i^v]) = \sum_{\beta \in X_i'} [\beta \beta^*] $$
(cf. \cite[p. 110]{AP17}),
we get that it suffices to show that $[t_i^v] \in (\rho_j)_*(K_1(e_jQ_je_j))$ for all $j\ne i$. But this is certainly true by the definition of the map $\rho_j$.
Indeed, if $i<j$ then $t_i^v = \rho_j (t_i^v)$ and if $i>j$ then $t_i^v= \rho_j(t_{i-1}^v)$.

This shows that the $K_1$ conditions in Theorem \ref{thm:K1Pullbacktheorem} are satisfied in our situation. Since, by Proposition \ref{prop:QisPullbackofQis},
the family of maps $\{\theta_i:Q\to Q_i\mid i=1,\ldots, r\}$ is the limit of the system $\{\rho_i:Q_i\to \overline Q\}$, we obtain from Theorem \ref{thm:K1Pullbacktheorem}
that $Q= Q_K(F,D,\sigma)$ is a separative von Neumann regular and that the family of maps $\{ \mathcal V (\theta_i):\mathcal V (Q)\to \mathcal V (Q_i)\mid i=1,\ldots, r\}$ is the limit of the 
system $\{ \mathcal V (\rho_i):\mathcal V (Q_i)\to \mathcal V (\overline Q )\}$. Now, by hypothesis, the natural maps $M(F_i,D^i) \to \mathcal V (Q_i)$ are isomorphisms for all $i=1,\dots , r$, and thus it follows from 
Theorem \ref{thm:monoid-limit} and the naturality of all the morphisms involved that the canonical map $M(F,D)\to \mathcal V (Q)$ is an isomorphism. 

This concludes the proof.
\end{proof}

We can finally prove Theorem \ref{thm:main-for-F}.

\medskip

\noindent{\it Proof of Theorem \ref{thm:main-for-F}:} The proof is by order-induction with respect to the separated graphs in the families $\mathcal F_J$, for the lower subsets $J$ of $\Ifree$, defined at the beginning of this section.
Indeed, we will show by order-induction that for any lower subset $J$ of $\Ifree$, all the algebras $Q_K(F,D, \sigma)$, for $(F,D)\in \mathcal F_J$, satisfy the conclusions of Theorem \ref{thm:main-for-F}.
Since $\mathcal F _{\Ifree} = \{ (E,C) \}$ and $Q_K(E,C,\sigma) = Q_K(E,C)$, the result follows from this. 

When $J=\emptyset$, the family $\mathcal F_{\emptyset} $ is just the class $\mathcal F$ of building blocks of $(E,C)$ (Definition \ref{def:buildingblocks-graphs}). 
For $F\in \mathcal F$, the algebra $Q_K(F,\sigma)$ satisfies the properties in the thesis of Theorem \ref{thm:main-for-F} by Proposition \ref{prop:basis-for-induction}. 

This establishes the basis for the induction. Now let $J$ be a lower subset of $\Ifree$ such that all the algebras $Q_K(F,D,\sigma)$ with $(F,D)\in \mathcal F_J$ satisfy the conclusions of Theorem \ref{thm:main-for-F},
and let $v$ be a minimal element in $\Ifree \setminus J$.  We may further assume that $J$ contains all the sinks of $E$ and that $|C_v|>1$. We can now apply Proposition \ref{prop:conditionforK1} to deduce that 
the conclusions of Theorem \ref{thm:main-for-F} hold for all the separated graphs $(F,D)\in \mathcal F_{J\cup \{ v \}}$. Now the result follows from the fact the poset $I$ is finite.   \qed



\section{Push-Outs}\label{sec:Pushouts}
In this final section we plan to explain the behaviour of the push-out construction in our setting. In particular, we develop the last step of the strategy displayed in the introduction. A related method was used in \cite{Ara10}, and as happens there, we will subsequently work with the notion of a {\em crowned pushout}, which we describe below.

Let $(E,C)$ be an adaptable separated graph, and let $\phi \colon (\tilde{E},\tilde{C}) \to (E,C)$ be a cover morphism, in the sense of Definition \ref{def:covermorphism}, where $ ( \tilde{E}, \tilde{C})$
satisfies condition {\rm (F)}, see Theorem \ref{thm:covering-theorem}. Then it follows from Theorem \ref{thm:main-for-F} that $Q_K(\tilde{E},\tilde{C})$ is a von Neumann regular ring and that
the natural map $M(\tilde{E},\tilde{C})\to \mathcal V (Q_K(\tilde{E},\tilde{C}))$ is an isomorphism. In this section we will show that the same properties hold for $(E,C)$. 

Recall the definitions and notations introduced in Section \ref{sect:covermap}.

By the proof of Theorem \ref{thm:covering-theorem}, there is a finite chain of adaptable separated graphs $(F_i,D^i)$, for $i=0,\dots , m$,  such that $(F_0,D^0)= (\tilde{E},\tilde{C})$ satisfies condition {\rm (F)},
$(F_m,D^m)=(E,C)$, and each pair $((F_i,D^i), (F_{i+1}, D^{i+1}))$ satisfies the conditions in the following definition:

\begin{definition}
 \label{def:crownedpair}
{\rm  Let $(E_1,C^1)$ and $(E_2,C^2)$ be two adaptable separated graphs. We say that the pair $((E_1,C^1), (E_2,C^2))$ is a {\it crowned pair} if there is a cover morphism
 $\phi\colon (E_1,C^1)\to (E_2,C^2)$ of separated graphs and vertices $v_1,v_2\in E_1^0$ such that $v:= \phi^0(v_1)=\phi^0(v_2)$ and: 
 \begin{enumerate}[(i)]
  \item For each $w\in  \tilde{T}^0(v)$ there is at most one
   $X\in \ol{C^2}\setminus (\ol{C^2})_w$ such that $r(X)\cap [w] \ne
   \emptyset $.
   \item $T^0(v_1)\cap T^0(v_2) =\emptyset$,
  \item $\phi$ induces an isomorphism of separated graphs from $T (v_i)$ to $T(v)$, for $i=1,2$, 
  \item Let $E_1'$ be the restriction of the graph $E_1$ to the set of vertices $E_1^0\setminus (T^0(v_1)\sqcup T^0(v_2))$ and let $E_2'$ be the restriction of the graph $E_2$ to the set of vertices $E_2^0\setminus T^0(v)$.
  Then $\phi$ restricts to a graph isomorphism from $E_1'$ onto $E_2'$. 
  \item Let $w\in (E_1')^0$ and let $X\in (\ol{C^2})_{\phi^0(w)}$ be such that $r(X)\cap [v]\ne \emptyset$. Since $\phi$ is a cover map, it follows from (i)-(iv) that there is exactly one $Y\in (\ol{C^1})_w$ such that $\phi^1(Y)=X$
  (see Lemma \ref{lem:crownedproperties}(3) below). 
  We ask that there is exactly one  $i\in \{ 1,2 \}$ such that  $r(Y)\cap [v_i] \ne \emptyset$.  \qed
   \end{enumerate}}
\end{definition}

We collect in the next Lemma several useful properties of the cover maps that appear in Definition \ref{def:crownedpair}. We denote by $I_i$ the posets $E_i^0/{\sim}$ of strongly connected components, for $i=1,2$

\begin{lemma}
\label{lem:crownedproperties}
Let $\phi \colon (E_1,C^1)\to (E_2,C^2)$ be a cover morphism between adaptable separated graphs $(E_1,C^1), (E_2,C^2)$. 
Assume that conditions (i)-(iv) in Definition \ref{def:crownedpair} hold. Then the following properties hold:
\begin{enumerate}
 \item For $w\in E_1^0$, we have $[w]\in (I_1)_{\rm free} \iff [\phi^0(w)]\in (I_2)_{\rm free}$.
 \item For each $w\in E_1^0$ and each $X\in (\ol{C^1})_w$ we have that $\phi^1(X) \in (\ol{C^2})_{\phi^0(w)}$. Moreover, $\phi^1$  restricts to a bijection from $X$ onto $\phi^1(X)$.  
 \item For each $w\in E_1^0$ and each $X\in (\ol{C^2})_{\phi^0(w)}$ there is exactly one $Y\in (\ol{C^1})_{w}$ such that
 $\phi^1(Y)=X$.
 \item If $w\in \tilde{T}^0(v_i)$ for some $i$, then there exists exactly one $X\in \ol{C^1}\setminus (\ol{C^1})_w$ such that $r(X)\cap [w]\ne \emptyset$. Moreover, we have $X\in (\ol{C^1})_{w'}$ for $w'\in T^0(v_i)$. 
 \item Suppose that in addition condition (v) also holds, so that $((E_1,C^1),(E_2,C^2))$ is a crowned pair. Then if $w\in (E_1')^0$, and $X\in (\ol{C^1})_{w}$
 is such that $r(X)\cap T^0(v_i) \ne \emptyset$ for some $i$, then 
 $$r(X)\cap (T^0(v_1) \cup T^0(v_2)) = r(X)\cap [v_i].$$
\end{enumerate}
\end{lemma}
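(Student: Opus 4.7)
The overall plan is to exploit the ``local isomorphism'' nature of $\phi$ provided by conditions (ii)--(iv): the cover restricts to separated graph isomorphisms on each of the three pieces $T(v_1)$, $T(v_2)$, and $E_1'$, whose vertex sets partition $E_1^0$. The central preliminary observation is that every strongly connected component $[w]$ of $E_1$ is contained in exactly one of these three pieces, since each of $T^0(v_1)$, $T^0(v_2)$, and $(E_1')^0$ is hereditary (hence $\sim$-saturated). Consequently, the relevant local isomorphism restricts to an isomorphism of separated graphs from $[w]$ onto $[\phi^0(w)]$; in particular $\phi^0$ is injective on $[w]$ and $\phi^0([w])=[\phi^0(w)]$, and the intrinsic separated graph structure transfers between the two classes. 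This gives (1), since the free/regular dichotomy is read off the intrinsic separated graph data at a strongly connected component. It also gives (2): the free case is Definition \ref{def:covermorphism}(2) verbatim, while for the regular case the identity $\phi^0([w])=[\phi^0(w)]$ together with the source-fiber bijections yields
\[
\phi^1(X_{[w]})=\bigcup_{u\in[w]}\phi^1(s^{-1}(u))=\bigcup_{u\in[w]}s^{-1}(\phi^0(u))=X_{[\phi^0(w)]}\in\overline{C^2}_{\phi^0(w)}.
\]

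For (3), existence is covered by (2) in the regular case (with $Y=X_{[w]}$) and by the source-fiber bijection in the free case; uniqueness comes from the same source-fiber bijection (free case) or from $\overline{C^1}_w$ being a singleton (regular case). For (4), set $u:=\phi^0(w)\in\widetilde{T}^0(v)$ using (iii). Condition (i) gives at most one $X'\in\overline{C^2}\setminus\overline{C^2}_u$ with $r(X')\cap[u]\ne\emptyset$; using (3) and tracing the preimages of the source of $X'$ under $\phi^0$, one lifts $X'$ uniquely to $X\in\overline{C^1}\setminus\overline{C^1}_w$ with $r(X)\cap[w]\ne\emptyset$. If the source of $X'$ lies in $T^0(v)$, it has exactly two preimages, one in each $T(v_j)$, and only the preimage in $T(v_i)$ produces a lift whose range meets $[w]\subseteq T^0(v_i)$; if instead it lies in $(E_2')^0$, consistency with the tree isomorphism combined with (i) applied inductively along the tree forces the unique lift reaching $[w]$ to again be sourced in $T^0(v_i)$.

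For (5), take $Y\in\overline{C^1}_w$ with $r(Y)\cap T^0(v_i)\ne\emptyset$ and set $X:=\phi^1(Y)$. First I rule out edges of $Y$ landing in $\widetilde{T}^0(v_j)$ for any $j$: if some $e\in Y$ had $r(e)\in\widetilde{T}^0(v_j)$, then $Y\in\overline{C^1}\setminus\overline{C^1}_{r(e)}$ with $r(Y)\cap[r(e)]\ne\emptyset$, so by the source part of (4) applied at $r(e)$, $Y$ would have to be sourced in $T^0(v_j)$, contradicting $w\in(E_1')^0$. Hence every edge of $Y$ landing in $T^0(v_1)\cup T^0(v_2)$ in fact lands in $[v_1]\cup[v_2]$. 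The hypothesis then gives $r(Y)\cap[v_i]\ne\emptyset$, so $r(X)\cap[v]\ne\emptyset$ and (v) applies: the unique $j$ with $r(Y)\cap[v_j]\ne\emptyset$ equals $i$, whence $r(Y)\cap(T^0(v_1)\cup T^0(v_2))=r(Y)\cap[v_i]$.

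The main obstacle is the ``source inside $T^0(v_i)$'' claim of (4) in the case when the source of $X'$ in $E_2$ lies in $(E_2')^0$; this requires using the cover's coherent handling of external edges into the tree, tracking them through condition (i) along the chain from $v$ down to $w$. Once (4) is in place, part (5) follows essentially formally from (4), (v) and the tree-disjointness (ii).
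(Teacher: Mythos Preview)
Your treatment of (1)--(3) and (5) is essentially the same as the paper's, and correct. The issue is in (4), where your lifting approach creates a phantom difficulty and leaves a small gap.

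First, the gap: to conclude uniqueness of $X$ upstairs, you need that every $Y\in\ol{C^1}\setminus\ol{C^1}_w$ with $r(Y)\cap[w]\ne\emptyset$ satisfies $\phi^1(Y)\in\ol{C^2}\setminus\ol{C^2}_u$, so that condition (i) forces $\phi^1(Y)=X'$. But $Y\in\ol{C^1}_{w''}$ with $[w'']\ne[w]$ does not a priori give $[\phi^0(w'')]\ne[u]$; this has to be argued (if $[\phi^0(w'')]=[u]$ then $\phi^0(w'')\in T^0(v)$, so by (iv) $w''\in T^0(v_1)\cup T^0(v_2)$, and heredity plus (ii)--(iii) yields a contradiction). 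The paper handles this explicitly as a separate case.

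Second, the phantom: your Case B (``source of $X'$ in $(E_2')^0$'') is in fact vacuous, so the inductive argument you worry about is unnecessary. Since $u\in\tilde T^0(v)$, there is a directed path from $v$ to $u$ in $E_2$; the last connector on this path entering $[u]$ gives some $X''\in\ol{C^2}\setminus\ol{C^2}_u$ with $r(X'')\cap[u]\ne\emptyset$ whose source lies in $T^0(v)$. By the uniqueness in condition (i), $X'=X''$, so the source of $X'$ is always in $T^0(v)$.

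The paper avoids both issues by working in the opposite direction: it first exhibits the obvious $X\in\ol{C^1}_{w'}$ with $w'\in T^0(v_i)$ coming from a path $v_i\to w$ in $E_1$, and then shows that any competitor $X'$ upstairs must satisfy $\phi^1(X')=\phi^1(X)$ (after ruling out $[\phi^0(w'')]=[u]$), whence $w''\in T^0(v_i)$ and $X'=X$ by (iii). This gives the source location for free and never needs to consider lifts from $(E_2')^0$.
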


\begin{proof}
 (1) If $w \in T^0(v_i)$ for some $i$, this holds by condition (iii) in Definition \ref{def:crownedpair}. Otherwise, if $w\in (E_1')^0$, then the result follows from the cover property and condition (iv).
 Indeed, if $[w]\in (I_1)_{\rm free}$ and $|C^1_w|>1$ then $|C^2_{\phi^0(w)}|= |C^1_w| >1$ because $\phi$ is a cover, and so $[\phi^0(w)] \in (I_2)_{\rm free}$. If $[w]\in (I_1)_{\rm free}$ and $|C^1_w|=1$ then
 there is only one loop at $w$ in $E_1'$ and so there is only one loop at $\phi^0(w)$ in $E_2'$, and thus in $E_2$. Therefore $[\phi^0(w)]\in (I_2)_{\rm free}$. Similarly, if $[w]\in (I_1)_{\rm reg}$ then
 $[\phi^0(w)]\in (I_2)_{\rm reg}$. (Note that in an arbitrary adaptable separated graph $(E,C)$, the sets $I_{\rm free}$ and $I_{\rm reg}$ are completely determined by the structure of $(E,C)$ as a separated graph.)
 
 (2) If $w\in (I_1)_{\rm free}$, this is a consequence of the fact that $\phi$ is a cover map. So assume that $w\in (I_1)_{\rm reg}$. Recall that in this case we have defined $X_w$ as $s_{E_1}^{-1}([w])$, that is, 
 the set of all the edges emitted by vertices in the strongly connected component $[w]$. If in addition $w\in T^0(v_i)$ for some $i$, then 
 the result follows from condition (iii) in Definition \ref{def:crownedpair}. If $w\in (E_1')^0$ then $\phi$ sends the strongly connected component $[w]$ of $w$ in $E_1$ bijectively to the strongly connected component 
 $[\phi ^0(w)]$ of $\phi^0(w)$ in $E_2$, by condition (iv).  If $e\in X_w =s_{E_1}^{-1}([w])$, then clearly $\phi^1(e) \in s_{E_2}^{-1}([\phi^0(w)])=X_{\phi^0(w)}$. Conversely, if $f\in  s_{E_2}^{-1}([\phi^0(w)])$ then there exists
 $w'\in [w]$ such that $\phi^0(w') = s(f)\in [\phi^0(w)]$ and so, by the cover property,
 there exists a unique $e\in s_{E_1}^{-1}(w')$ such that $\phi^1(e)= f$. If $e,e'\in X_w$ are such that $\phi^1(e)=\phi^1(e')$, then 
 $$\phi^0(s(e)) = s(\phi^1(e)) = s(\phi ^1(e'))= \phi^0(s(e')) \in X_{\phi^0(w)}$$
 and since $s(e),s(e')\in [w]$, we get that $s(e)=s(e')$ by the injectivity of $\phi^0|_{[w]}$. Now $e=e'$ follows from the fact that $\phi$ is a cover map. 
 
 (3) If $w\in (I_1)_{\rm free}$ this follows from the cover property of $\phi$. If $w\in (I_1)_{\rm reg}$, this follows from (1) and (2).
 
 (4) Assume for definiteness that $w\in \tilde{T}^0(v_1)$. Clearly there exists $X\in (\ol{C^1})_{w'}$, with $w'\in T^0(v_1)$ and $[w']\ne [w]$, such that
 $r(X)\cap [w]\ne \emptyset$. Let $X' \in (\ol{C^1})_{w''}$ be such that $r(X') \cap [w]\ne \emptyset$, where $[w'']\ne [w]$. By (2) we have that
 $\phi^1(X')\in (\ol{C^2})_{\phi^0(w'')}$ and clearly $r(\phi^1(X'))\cap [\phi^0(w)]\ne \emptyset$. If $[\phi^0(w'')]=[\phi^0(w)]$, then by (3) there is a unique
 $Y\in (\ol{C^1})_w$ such that $\phi^1(Y)= \phi^1(X')$. By condition (iv) we have from $[\phi^0(w'')]=[\phi^0(w)]$ that $w''\in T^0(v_1) \cup T^0(v_2)$ and by condition 
 (iii) we get that $w''\in T^0(v_2)$, because $[w]\ne [w'']$. But now we get that $r(X') \cap [w] \subseteq T^0(v_2)\cap T^0(v_1) = \emptyset $ (using (ii)), which is a contradiction. 
 Hence we get that $\phi^1(X') \in \ol{C^2}\setminus (\ol{C^2})_{\phi^0(w)}$, and $\phi^1(X')\cap [\phi^0(w)]\ne \emptyset$. By condition (i) we get $\phi^1(X')=\phi ^1(X)$. This in particular implies 
 that $[\phi^0(w')]=[\phi^0(w'')]$ which, using again conditions (ii)-(iv), implies that $w''\in T^0(v_1)$. Now by condition (iii) we get that $X=X'$, as desired.  
 
 (5) Let $w \in (E_1')^0$ and $X\in (\ol{C^1})_w$ be such that $r(X)\cap T^0(v_i)\ne \emptyset$ for some $i$. By (4) 
 we have $r(X)\cap T^0(v_j) = r(X)\cap [v_j]$ for $j=1,2$.
 Hence, using condition (v), we get 
  $$r(X)\cap (T^0(v_1)\cup T^0(v_2)) = r(X) \cap  ([v_1]\cup [v_2]) = r(X)\cap [v_i]. $$
  \end{proof}

\begin{remark}
 \label{rem:crownedremark}
{\rm  In the conditions of Theorem \ref{thm:covering-theorem}, we may obtain the desired chain of adaptable separated graphs $(F_i,D^i)$, $i=0,1,\dots ,m$, such that each pair $((F_i,D^i),(F_{i+1},D^{i+1}))$ is a crowned pair,
just by considering each time only two copies of the target separated graph $T(\ol{v})$ (instead of the $r$ copies considered in the proof of that theorem). With this procedure, for a given inductive step
$\psi'\colon (F',D')\to (F,D)$
as in the proof of Theorem \ref{thm:covering-theorem}, we arrive at the same result by using a chain of adaptable separated graphs 
$$(F_0',D_0')\to (F_1',D_1')\to \cdots \to  (F_{r-1}',D_{r-1}')$$ 
of length $r-1$, with $(F_0',D_0')=(F',D')$ and $(F_{r-1}',D_{r-1}')=(F,D)$. One directly verifies that each pair of consecutive terms 
in this chain is a crowned pair in the sense of Definition \ref{def:crownedpair}.} \qed
 \end{remark}

Let $P$ be a conical monoid, and suppose it contains two order-ideals $I$ and $I'$, with $I\cap I'=\{ 0 \}$, such that there is an isomorphism  $\varphi:I\to I'$. We consider the square:

      \[
            \begin{tikzpicture}
      \matrix (m) [matrix of math nodes,row sep=3em,column sep=3em,minimum width=1em]
      {
        I   &I\\
      I' &P\\
      };
      \path[-stealth]
      (m-1-1) edge [->] node  [above] {=} (m-1-2)
              edge [->] node [left] {$\varphi$} (m-2-1)

      (m-1-2) edge [->] node [right] {$\iota_1$} (m-2-2)

      (m-2-1)       edge [->] node  [below] {$\iota_{2}$} (m-2-2)
          ;
      \end{tikzpicture} 
      \]

\begin{definition}\label{def:crownedPush}
    The {\em crowned pushout} $Q$ of $(P,I,I',\varphi)$ is by definition the coequalizer of the maps $\iota_1\colon I\to P$ and $\iota_2\circ\varphi:I\to P$, so that there is a map $f\colon P\to Q$ with 
    $f(\iota_1(x))=f(\iota_2(\varphi(x)))$ for all $x\in I$, and given any other map $g\colon P\to Q'$ such that  $g(\iota_1(x))=g(\iota_2(\varphi(x)))$ for all $x\in I$, we have that $g$ factors uniquely through $f$.\qed
\end{definition}

We now show that if $((E_1,C^1),(E_2,C^2))$ is a crowned pair, we can obtain the monoid $M(E_2,C^2)$ as a crowned pushout of $M(E_1,C^1)$. Note that the cover map $\phi\colon (E_1,C^1)\to (E_2,C^2)$
induces a surjective monoid homomorphism $M(\phi) \colon M(E_1,C^1)\to M(E_2,C^2)$.

    \begin{proposition}\label{prop:PushOutMonoid} In the notation of Definition \ref{def:crownedpair}, let $((E_1,C^1), (E_2,C^2))$ be a crowned pair. Assume $P= M(E_1,C^1)$, and for $i=1,2$, let $N_i= M(T^0(v_i))$ be the order-ideal of $P$ generated by the hereditary $C^1$-saturated subset $T^0(v_i)$ of $E_1^0$. 
        Then the natural homomorphism $M(\phi) \colon P=M(E_1,C^1) \to M(E_2,C^2)$ is the crowned pushout of $(P,N_1,N_2, M(\varphi))$, where $M(\varphi)$ is the monoid isomorphism induced by the isomorphism 
        of separated graphs $\varphi :=(\phi|_{T(v_2)})^{-1}\circ (\phi|_{T(v_1)})$. 
        \end{proposition}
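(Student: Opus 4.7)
The plan is to establish the coequalizer property by showing that $M(E_2,C^2)$ has a presentation matching the quotient of $P=M(E_1,C^1)$ by the monoid congruence generated by identifying $N_1$ with $N_2$ via $M(\varphi)$. Let $Q$ denote the coequalizer of $\iota_1$ and $\iota_2\circ M(\varphi)$, with universal map $f\colon P\to Q$. The goal is to show that $M(\phi)$ induces an isomorphism $\bar\phi\colon Q\to M(E_2,C^2)$.

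First I would verify that $M(\phi)$ coequalizes $\iota_1$ and $\iota_2\circ M(\varphi)$. Since $\varphi=(\phi|_{T(v_2)})^{-1}\circ(\phi|_{T(v_1)})$, one has $\phi^0\circ \varphi^0=\phi^0|_{T^0(v_1)}$, so for every generator $a_w$ with $w\in T^0(v_1)$ we get $M(\phi)(a_w)=a_{\phi^0(w)}=a_{\phi^0(\varphi^0(w))}=M(\phi)(M(\varphi)(a_w))$. By the universal property, $M(\phi)$ factors uniquely as $\bar\phi\circ f$ for a monoid homomorphism $\bar\phi\colon Q\to M(E_2,C^2)$.

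Next I would describe $Q$ explicitly by generators and relations: generators $\{a_w : w\in E_1^0\}$ modulo the identifications $a_w=a_{\varphi^0(w)}$ for $w\in T^0(v_1)$, together with the defining $(E_1,C^1)$-relations. By condition (ii) of Definition \ref{def:crownedpair}, $T^0(v_1)\cap T^0(v_2)=\emptyset$, so the quotient set of generators is in bijection with $E_2^0=(E_1')^0\sqcup T^0(v)$ via $\phi^0$. The central step is to check that under this bijection the family of $C^1$-relations descends precisely onto the family of $C^2$-relations. For $w\in T^0(v_i)$, the isomorphism of separated graphs $\phi|_{T(v_i)}$ (condition (iii)) transfers each relation $a_w=\sum_{e\in X}a_{r(e)}$ ($X\in C^1_w$) to the corresponding $C^2$-relation at $\phi^0(w)$, using that $T(v_i)$ is a subtree so $r(X)\subseteq T^0(v_i)$. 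For $w\in(E_1')^0$, the cover property together with condition (iv) gives a bijection $C^1_w\to C^2_{\phi^0(w)}$, $X\mapsto \phi^1(X)$; the relation at $w$ descends through the identifications to the matching $C^2$-relation at $\phi^0(w)$, where the target $r(e)$ for $e\in X$ may lie in $E_1'$ or in one of the trees $T(v_j)$, and in each case the identification sends $a_{r(e)}$ to the correct generator indexed by $\phi^0(r(e))\in E_2^0$. Conversely, every $C^2$-relation arises in this way from at least one $C^1$-relation.

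The main obstacle will be checking consistency for vertices $u\in T^0(v)$, which have two preimages $w_1\in T^0(v_1)$ and $w_2\in T^0(v_2)$ under $\phi^0$: each $Y\in C^2_u$ lifts via the cover to two sets $X_1\in C^1_{w_1}$ and $X_2\in C^1_{w_2}$, producing two a priori distinct $C^1$-relations. Consistency follows from the fact that $\varphi$ is an isomorphism of separated graphs with $\varphi^1(X_1)=X_2$ (by the definition of $\varphi$ via $\phi$), so after quotienting by the identifications $a_{w_1}=a_{w_2}$ and $a_{r(e)}=a_{\varphi^0(r(e))}$ for $e\in X_1$, the two relations coincide. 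Together with the explicit presentation computed above, this will show that $\bar\phi\colon Q\to M(E_2,C^2)$ is an isomorphism, finishing the proof.
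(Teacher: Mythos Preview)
Your proposal is correct and follows essentially the same approach as the paper. The paper phrases it slightly differently: rather than comparing presentations directly, it constructs an explicit map $\gamma\colon M(E_2,C^2)\to Q$ on generators, checks well-definedness (which is exactly your verification that every $C^2$-relation lifts to a $C^1$-relation), and then uses the universal property of $Q$ to conclude $\gamma\circ\rho=\mathrm{id}_Q$; but the substance is the same.
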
    
    
    \begin{proof} Let $\theta\colon  P\to Q$ be the canonical map from $P$ to the crowned pushout $Q$ of $(P,N_1,N_2, M(\varphi))$.
    Observe that $\theta (a_w)= \theta (a_{\varphi (w)})$ for all $w\in T^0(v_1)$. 
    
    Clearly we have, for $w\in T^0(v_1)$,
       $$M(\phi) (a_w) = a_{\phi (w)} = a_{\phi (\varphi (w))} = M(\phi) (M(\varphi)(a_w)).$$
   so $M(\phi)$ coequalizes the maps the maps $\iota_1$ and $\iota_2 \circ M(\varphi)$ in the diagram
        \begin{equation} 
         {\label{eq:ncdiagram-monoids}}
        \begin{tikzpicture}[baseline=(current  bounding  box.center)]
        \matrix (m) [matrix of math nodes,row sep=3em,column sep=3em,minimum width=1em]
        {
            M(T(v_1))   &M(T(v_1))&&\\
            M(T(v_2)) &M( E_1, C^1)&&\\
        };
        \path[-stealth]
        (m-1-1) edge [->] node  [above] {=} (m-1-2)
        (m-1-1) edge [->] node [left] {$M(\varphi)$} (m-2-1)
        (m-1-2) 
        edge [->] node [right] {$\iota_1$} (m-2-2)
        (m-2-1)  edge [->] node  [below] {$\iota_2$} (m-2-2)
        ;
        \end{tikzpicture} 
               \end{equation}
  Therefore there is a unique monoid homomorphism $\rho \colon Q \to M(E_2,C^2)$ such that $M(\phi) = \rho \circ \theta $. Since $M(\phi)$ is surjective, we see that $\rho$ is also surjective.  
  We now define a homomorphism $\gamma \colon M(E_2,C^2) \to Q$ by the rules
  $$\gamma (a_w) = \theta (a_{(\phi|_{E_1'})^{-1}(w)}) \qquad  \text{if } w\in E_2^0\setminus T^0(v)$$
  and
  $$\gamma (a_w) = \theta (a_{(\phi|_{T(v_1)})^{-1} (w)}) = \theta (a_{(\phi|_{T(v_2)})^{-1} (w)}) \qquad \text{if } w\in T^0(v) .$$
  We need to check that $\gamma $ is well-defined. So let $w\in E_2^0$ and $X\in C_w$. If $w\in T^0(v)$, then it follows from condition 
  (iii) in Definition \ref{def:crownedpair} that the relation $a_w= \sum _{x\in X} a_{r(x)}$ is preserved by $\gamma $.
  Suppose now that $w\in E_2^0\setminus T^0(v) = (E_2')^0$. If $r(X)\cap T^0(v) =\emptyset$, then the fact  that the above relation is preserved by
  $\gamma $ follows from condition (iv) in Definition \ref{def:crownedpair}. Assume finally that $r(X)\cap T^0(v)\ne \emptyset$. By condition (i) in Definition \ref{def:crownedpair},
  we then have that $r(X) \cap T^0(v) = r(X)\cap [v]$. We write $X= X_1\sqcup X_2$ with
  $$X_1 = \{ x\in X\mid r(x)\notin T^0(v) \},\qquad X_2= \{ x\in X\mid r(x) \in [v] \}.$$
  Let $w'$ be the unique vertex in $E_1$ such that $\phi^0 (w')= w$. Clearly $w'\in (E_1')^0$.
  Let $Y$ be the unique element in $(C^1)_{w'}$ such that $\phi^1(Y)=X$. Since $\phi$ is a cover map, it follows that $\phi^1$ induces a bijection between $Y$ and $X$, so we can write
  $Y=Y_1\sqcup Y_2$ with $\phi^1 (Y_i)=X_i$ for $i=1,2$. Note that necessarily $r(y)\in (E_1')^0$ for $y\in Y_1$. 
   By condition (v) in Definition \ref{def:crownedpair} and Lemma \ref{lem:crownedproperties}(5), there is a unique $i\in \{ 1,2 \}$ such that $r(y)\in [v_i]$
   for all $y\in Y_2$.
  
  We now have
\begin{align*}
\gamma (a_w) & = \theta (a_{w'}) = \sum _{y\in Y_1} \theta (a_{r(y)}) + \sum _{y\in Y_2} \theta (a_{r(y)})\\ 
 &   = \sum _{x\in X_1} \theta(a_{(\phi|_{E_1'})^{-1}(r(x))}) + \sum_{x\in X_2} \theta (a_{(\phi|_{T(v_i)})^{-1}(r(x))})\\
 & = \sum _{x\in X} \gamma (a_{r(x)}) ,
  \end{align*}
   which shows that the relation $a_w= \sum _{x\in X} a_{r(x)}$ is also preserved in this case. 
  
  Hence we have a well-defined monoid homomorphism $\gamma \colon M(E_2,C^2) \to Q$. Observe that
  $$\gamma \circ \rho \circ \theta = \gamma \circ M(\phi) = \theta ,$$
  and so by the universal property of the crowned pushout we have that $\gamma \circ \rho = \text{id}_Q$. Therefore $ \rho $ is injective, and since we already know it is surjective, we
  conclude that $\rho $ is a monoid isomorphism. This shows the result. 
   \end{proof}
   
\subsection{Crowned Pushouts and von Neumann regular rings}\label{Subsection:CrownedPushouts} Now we describe the crowned pushout construction at the level of algebras and its relationship
with the corresponding $\mathcal V$-monoids.  
For this, we strongly use the following result appearing in \cite{Ara10}:
\begin{proposition}\cite[Proposition 4.5]{Ara10}
\label{prop:pushoutVNregular}
    Let $R$ be a (not necessarily unital) von Neumann regular ring with ideals $I$ and $I'$ such that $I\cap I' = 0$, and suppose that $I$ and $I'$ are Morita equivalent. Then there is a von Neumann regular ring $U$ with an ideal $J$ such that the following holds:
    \begin{enumerate}
        \item There exists an injective ring homomorphism $\alpha:R\to U$ such that $\alpha(I),\alpha(I')\subseteq J$.
        \item The map $\mathcal V(\alpha)\colon\mathcal V(R)\to\mathcal V(U)$ restricts to an isomorphism from $\mathcal V(I)$ onto $\mathcal V(J)$, and it also restricts to an isomorphism from $\mathcal V(I')$ onto $\mathcal V(J)$.
        \item Let $\varphi\colon \mathcal V(I)\to\mathcal V(I')\subseteq \mathcal V(R)$ be the isomorphism defined by $$\varphi\colon =(\mathcal V(\alpha)_{|\mathcal V(I')})^{-1}\circ (\mathcal V(\alpha)_{|\mathcal V(I)}).$$Then, $\mathcal V(\alpha)\colon\mathcal V(R)\to\mathcal V(U)$ is the 
        coequalizer of the following (noncommutative) diagram:

     \[
     \begin{tikzpicture}
     \matrix (m) [matrix of math nodes,row sep=3em,column sep=3em,minimum width=1em]
     {
        \mathcal V(I)   &   \mathcal V(I) \\
        \mathcal V(I') &\mathcal V(R)\\
     };
     \path[-stealth]
     (m-1-1) edge [->] node  [above] {=} (m-1-2)
     edge [->] node [left] {$\varphi$} (m-2-1)

     (m-1-2) edge [->] node [right] {$\iota_1$} (m-2-2)

     (m-2-1)       edge [->] node  [below] {$\iota_{2}$} (m-2-2)
     ;
     \end{tikzpicture}
     \]
    \end{enumerate}
\end{proposition}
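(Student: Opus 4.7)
I would build $U$ explicitly by adjoining to $R$ enough matrix units to witness the Morita equivalence of $I$ and $I'$ internally. First, since $I$ and $I'$ are Morita equivalent non-unital von Neumann regular rings with local units, I would invoke a surjective Morita context $(I,I',M,N,\mu,\tau)$ with $\mu\colon M\otimes_{I'}N\to I$ and $\tau\colon N\otimes_I M\to I'$ as a concrete encoding of the equivalence. The guiding heuristic is that the functor $-\otimes_I M$ implements the isomorphism $\mathcal V(I)\to \mathcal V(I')$, and I want a ring-level construction that internalizes this isomorphism.

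\textbf{Construction of $U$.} Consider the generalized matrix ring
$$T \;=\; \begin{pmatrix} I & M \\ N & I' \end{pmatrix},$$
which is von Neumann regular (since $I$ and $I'$ are regular and the Morita context is surjective) and in which $I$ and $I'$ sit as orthogonal corners related by matrix units coming from $M$ and $N$. I would then form $U$ as an amalgamation of $R$ and $T$ along the subring $I\oplus I'$. Concretely, one can take the pushout in a suitable category of rings with local units, or alternatively realize $U$ directly as a subring of a matrix ring over $R$ by adjoining elements implementing the bimodule structures. The canonical map $\alpha\colon R\to U$ is then injective by the freeness of the amalgamation, and $\alpha(I),\alpha(I')\subseteq J$ where $J$ is the ideal of $U$ generated by $\alpha(I)\cup\alpha(I')$, yielding~(1).

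\textbf{Analysis of the $\mathcal V$-monoid.} For (2), the matrix units inside $J$ witness that every idempotent in $M_\infty(J)$ is equivalent to one supported in $\alpha(I)$ (and independently to one supported in $\alpha(I')$), so the restricted maps $\mathcal V(I)\to\mathcal V(J)$ and $\mathcal V(I')\to\mathcal V(J)$ are surjective; injectivity follows from the injectivity of $\alpha$ together with the fact that the Morita data preserve idempotent-class information. For (3), given any commutative monoid $S$ and any homomorphism $g\colon\mathcal V(R)\to S$ coequalizing $\iota_1$ and $\iota_2\circ\varphi$, one builds the factorization through $\mathcal V(U)$ by defining it on the new $\mathcal V$-classes arising from $J$: each such class is, by the Morita structure, equivalent in $M_\infty(J)$ to a class already in $\mathcal V(\alpha(I))$, whose image is prescribed by the coequalizing condition on $g$, and well-definedness of this assignment is exactly the hypothesis $g\circ\iota_1=g\circ\iota_2\circ\varphi$.

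\textbf{Main obstacle.} The central technical difficulty is ensuring that $U$ is itself von Neumann regular; an abstract ring-theoretic pushout need not preserve regularity, so one must identify $U$ with a concrete regular ring, e.g.\ as a corner of a matrix ring over $R$ amplified by the Morita context, or as a universal localization of an enlarged matrix ring, where regularity can be checked element by element using the regularity of $R$ and $T$. A secondary point to handle with care is that the diagram in the statement is explicitly noncommutative, so "coequalizer" must be interpreted as identifying $\iota_1(x)$ with $\iota_2(\varphi(x))$ for all $x\in\mathcal V(I)$, and the universal factorization in (3) must be shown to be both well-defined and unique on the corresponding quotient of $\mathcal V(R)$.
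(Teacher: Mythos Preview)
The paper does not prove this proposition; it is cited from \cite[Proposition 4.5]{Ara10}. However, the proof of Theorem~\ref{thm:main-pushoutthem} reveals the explicit construction of $U$ used in \cite{Ara10}, so we can compare.

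Your generalized matrix ring $T=\begin{pmatrix} I & M\\ N & I'\end{pmatrix}$ is exactly right as an ingredient: in the actual construction it appears as the ideal $J$, not as something to be amalgamated with $R$. The ring $U$ is built differently. One sets $Q_1=R/I'$, $Q_2=R/I$, and defines $U$ as the subring of the generalized $2\times 2$ matrix ring
\[
U=\left\{\begin{pmatrix} q_1 & n\\ m & q_2\end{pmatrix}\;:\; q_1\in Q_1,\ q_2\in Q_2,\ n\in N,\ m\in M,\ \pi_4(q_1)=\pi_3(q_2)\right\},
\]
where $\pi_4\colon Q_1\to R/(I+I')$ and $\pi_3\colon Q_2\to R/(I+I')$ are the projections. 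The diagonal part is thus the pullback $Q_1\times_{R/(I+I')}Q_2$, which is canonically isomorphic to $R$ (since $I\cap I'=0$), and $\alpha(r)=\mathrm{diag}(\pi_1(r),\pi_2(r))$. The ideal $J$ is precisely your $T$.

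This explicit fibered-matrix description resolves exactly the obstacle you identify: regularity of $U$ is now checkable directly, since $U$ sits as a concrete ring of matrices over quotients of the regular ring $R$ glued via the surjective Morita context, rather than as an abstract amalgam. Your pushout/amalgamation framework is the wrong shape here---amalgamating $R$ with $T$ along $I\oplus I'$ would not obviously yield a regular ring, and you would still need to identify the result with something concrete to verify (2) and (3). The actual construction sidesteps this by never leaving the category of explicit matrix rings.
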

The main result of this section is the next theorem. It is the key ingredient for the induction step in the proof of Theorem \ref{ThmA}, which will be given at the end of the section.

\begin{theorem}
\label{thm:main-pushoutthem}
Let $((E_1,C^1), (E_2,C^2))$ be a crowned pair and assume the notation in Definition \ref{def:crownedpair}.
        Suppose that $Q_K(E_1,C^1)$ is von Neumnan regular and that the natural map $M(E_1,C^1)\to \mathcal V (Q_K(E_1, C^1))$ is an isomorphism.
        Then $Q_K(E_2,C^2)$ is von Neumann regular and the natural map $M(E_2,C^2)\to \mathcal V (Q_K(E_2,C^2))$ is an isomorphism.
\end{theorem}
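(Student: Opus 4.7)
The plan is to combine Proposition \ref{prop:PushOutMonoid} (crowned pushout at the monoid level) with Proposition \ref{prop:pushoutVNregular} (crowned pushout at the algebra level, borrowed from \cite{Ara10}), and then identify the resulting objects with $M(E_2,C^2)$ and $Q_K(E_2,C^2)$ respectively.

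First, set $R:=Q_K(E_1,C^1)$, which by hypothesis is a von Neumann regular ring with $\mathcal{V}(R)\cong M(E_1,C^1)$. Conditions (ii) and (iii) of Definition \ref{def:crownedpair} guarantee that $T^0(v_1)$ and $T^0(v_2)$ are disjoint hereditary $C^1$-saturated subsets of $E_1^0$. Through the injective lattice homomorphism of Proposition \ref{prop:injective-map-lattice ideals}, they determine ideals $I_1,I_2\subseteq R$ with $I_1\cap I_2=\{0\}$. The separated-graph isomorphism $\varphi:=(\phi|_{T(v_2)})^{-1}\circ(\phi|_{T(v_1)}):T(v_1)\to T(v_2)$ (well defined by (iii)) induces, through the corner isomorphisms of Theorem \ref{theor:repres}, an isomorphism $e(T^0(v_1))Re(T^0(v_1))\cong e(T^0(v_2))Re(T^0(v_2))$, and hence a Morita equivalence $I_1\sim I_2$ whose induced map on $\mathcal{V}$-monoids agrees, under the identification $\mathcal{V}(I_i)\cong M(T^0(v_i))$, with the monoid isomorphism $M(\varphi)$ featured in Proposition \ref{prop:PushOutMonoid}.

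Applying Proposition \ref{prop:pushoutVNregular} to $(R,I_1,I_2)$ then produces a von Neumann regular ring $U$, an ideal $J\triangleleft U$, and an embedding $\alpha:R\hookrightarrow U$ such that $\mathcal{V}(\alpha)$ is the coequalizer of $\iota_1$ and $\iota_2\circ\mathcal{V}(\varphi)$. On the other side, Proposition \ref{prop:PushOutMonoid} identifies the induced map $M(\phi):M(E_1,C^1)\to M(E_2,C^2)$ as the coequalizer of the analogous monoid diagram. By the universal property of coequalizers, these two facts combine to give a canonical monoid isomorphism $\mathcal{V}(U)\cong M(E_2,C^2)$ compatible with the respective maps from $M(E_1,C^1)$.

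The remaining task, and the main obstacle I foresee, is to identify the abstractly constructed ring $U$ with $Q_K(E_2,C^2)$. I would build a $K$-algebra homomorphism $\beta:Q_K(E_2,C^2)\to U$ using the presentation of the target: each generator (vertex, edge, or $t_i^w$) is sent to $\alpha$ applied to a chosen $\phi$-preimage. Generators outside $T(v)$ have unique preimages by Definition \ref{def:crownedpair}(iv); the two preimages in $T^0(v_1)$ and $T^0(v_2)$ of a vertex in $T^0(v)$ yield equivalent idempotents in $U$ via the Morita equivalence built into $U$ in Proposition \ref{prop:pushoutVNregular}. Verifying that the relations of Blocks 1 and 2 of \ref{pt:KeyDefs}, as well as the invertibility of the matrices in $\bigcup_p \Sigma(p)$, are preserved is delicate but essentially routine: the key structural input is condition (v) of Definition \ref{def:crownedpair}, which pins down how each connector in $(E_1')^1$ distributes between the preimages $[v_1]$ and $[v_2]$ and thus guarantees consistency on both sides of the identification. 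Once $\beta$ is constructed, $\mathcal{V}(\beta)$ must coincide with the natural identification $M(E_2,C^2)\cong \mathcal{V}(U)$ obtained above, so it is an isomorphism; combining this with the faithful representation of Theorem \ref{theor:repres} one then extracts both surjectivity and injectivity of $\beta$. Consequently $Q_K(E_2,C^2)\cong U$ is von Neumann regular and the canonical map $M(E_2,C^2)\to\mathcal{V}(Q_K(E_2,C^2))$ is a monoid isomorphism, as required.
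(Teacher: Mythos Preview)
Your overall strategy matches the paper's: set up the Morita equivalence between the ideals $I_1,I_2\lhd R=Q_K(E_1,C^1)$ via the corner isomorphism induced by $\varphi$, apply Proposition~\ref{prop:pushoutVNregular} to obtain the regular ring $U$, and then combine with Proposition~\ref{prop:PushOutMonoid} to identify $\mathcal V(U)\cong M(E_2,C^2)$. This part is fine.

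However, there are two genuine gaps in the final identification step. First, you aim for an isomorphism $Q_K(E_2,C^2)\cong U$, but no such isomorphism exists in general: the ring $U$ built in the proof of Proposition~\ref{prop:pushoutVNregular} is a $2\times 2$ matrix-like ring over the pullback $Q_1\times_{\overline Q}Q_2$, and the two preimages $w_1\in T^0(v_1)$, $w_2\in T^0(v_2)$ of a vertex $w\in T^0(v)$ are sent by $\alpha$ to \emph{orthogonal} (not equal) equivalent idempotents $\mathrm{diag}(\pi_1(w_1),0)$ and $\mathrm{diag}(0,\pi_2(w_2))$. Your remark that they ``yield equivalent idempotents'' is correct but does not help define a ring homomorphism; one must commit to a definite element. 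The paper resolves this by passing to the full corner $eUe$ for the idempotent $e=\mathrm{diag}(1_{\mathcal M(Q_1)},\,1_{\mathcal M(Q_2)}-e_2)$, which kills the redundant copy, and then constructs an explicit $\delta\colon Q_K(E_2,C^2)\to eUe$ (including off-diagonal bimodule terms for connectors that used to land in $[v_2]$).

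Second, your proposed proof that $\beta$ is bijective is not sufficient. Knowing that $\mathcal V(\beta)$ is an isomorphism between regular rings does \emph{not} force $\beta$ to be an isomorphism (e.g.\ the field endomorphism $K(x)\to K(x)$, $x\mapsto x^2$, induces the identity on $\mathcal V$ but is not surjective), and the faithful representation of Theorem~\ref{theor:repres} gives information about $Q_K(E_2,C^2)$, not about the map $\beta$. The paper instead proves $\delta$ is an isomorphism by a five-lemma style argument on the extension $\langle T^0(v)\rangle\hookrightarrow Q_K(E_2,C^2)\twoheadrightarrow Q_K(E_2,C^2)/\langle T^0(v)\rangle$ mapping to $eJe\hookrightarrow eUe\twoheadrightarrow eUe/eJe$: the quotient map is an isomorphism essentially by inspection, and the restriction $\delta_{T(v)}\colon\langle T^0(v)\rangle\to eJe$ is shown to be bijective by matching the four corners of $eJe$ with the four classes of pairs $(\gamma_1,\gamma_2)$ in the direct-sum decomposition of Theorem~\ref{thm:reducedmonomialsdirectsum}, according to whether $\gamma_1$ and/or $\gamma_2$ ``cross the border through $v_2$''. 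That structural decomposition is the missing ingredient in your outline.
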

\begin{proof}
  Easing notation, we denote by $Q:=Q_K( E_1, C^1)$,  $I_1:=\langle T^0(v_1)\rangle$ and $I_2:=\langle T^0(v_2)\rangle$ the ideals of $Q$ generated by $T^0(v_1)$ and $T^0(v_2)$ respectively . 
  Notice that, using the idempotents $$e_1:=e(v_1)=\sum_{w\in T^0(v_1)}w\quad \,\,\,\text{ and }\quad \,\,\,e_2:=e(v_2)=\sum_{w\in T^0(v_2)}w,$$
  in the multiplier algebra of $Q$, one has a canonical isomorphism $e_iQe_i \cong Q_K(T(v_i))$ for $i=1,2$, by Theorem \ref{theor:repres}. Moreover, we have an isomorphism of separated graphs $\varphi \colon T(v_1) \to T(v_2)$ given by
  $$\varphi = (\phi|_{T(v_2)})^{-1} \circ (\phi|_{T(v_1)}),$$
which gives us an isomorphism $e_1Qe_1 \cong e_2Qe_2$ given by the composition $e_1Qe_1 \cong Q_K(T(v_1)) \cong Q_K(T(v_2)) \cong e_2Qe_2$.
  We will denote this isomorphism $e_1Qe_1\to e_2Qe_2$ also by $\varphi$. 
 Since the rings $I_i$ and $e_iQe_i$ are Morita-equivalent, we obtain a Morita-equivalence between the non-unital rings $I_1$ and $I_2$.
 This Morita equivalence is explicitly realized as follows. We set
 $$N = Qe_1\otimes_{e_1Qe_1} e_2Q, \qquad M = Qe_2\otimes _{e_2Qe_2} e_1Q,$$
 where the action of $e_1Qe_1$ on $e_2Q$ is defined by $x\cdot y = \varphi (x) y$, and similarly the action of $e_2Qe_2$ on $e_1Q$ is defined by $x\cdot y= \varphi^{-1} (x) y$.
 Observe that $N$ is an $I_1$-$I_2$-bimodule, $M$ is an $I_2$-$I_1$-bimodule and that there are isomorphisms
 $$M\otimes _{I_1} N \to I_2, \qquad  N\otimes_{I_2} M \to I_1$$
 implementing a Morita equivalence between $I_1$ and $I_2$.

 Write $Q_1:=Q/I_2$ and $Q_2:=Q/I_1$. In order to use Proposition \ref{prop:pushoutVNregular}, we now describe the ring $U$ associated to the $K$-algebra $Q$, the pair of ideals $I_1$ and $I_2$ and 
 the concrete Morita context described above
 (see the proof of \cite[Proposition 4.5]{Ara10}).  
  The following commutative diagram is a pullback:  \[
 \begin{tikzpicture}
 \matrix (m) [matrix of math nodes,row sep=3em,column sep=3em,minimum width=1em]
 {
    Q   & Q_1 \\
    Q_2 &Q/(I_1+I_2)\\
 };
 \path[-stealth]
 (m-1-1) edge [->] node  [above] {$\pi_1$} (m-1-2)
 edge [->] node [left] {$\pi_2$} (m-2-1)

 (m-1-2) edge [->] node [right] {$\pi_4$} (m-2-2)

 (m-2-1)       edge [->] node  [below] {$\pi_3$} (m-2-2)
 ;
 \end{tikzpicture}
 \] where each $\pi_i$ is the corresponding canonical projection map. 
 
 Following \cite[Proposition 4.5]{Ara10}, we now define the ring $U$ whose elements are all the matrices
 \[
 X=\left (\begin{array}{ll}
       q_1 & n\\
       m& q_2\\
      \end{array}\right) 
      \]
    such that $q_1\in Q_1$, $q_2\in Q_2$, $n\in N$, $m\in M$, and  $\pi_4(q_1)=\pi_3(q_2)$. Moreover, we consider the ideal $J$ in $U$ given by
       $ J:=\left (\begin{array}{ll}
     I_1 & N\\
     M& I_2\\
     \end{array}\right ),$ and the map $\alpha : Q \to U$ defined by $\alpha(q)={\rm diag}(\pi_1(q),\pi_2(q))\in  U$.

Since $Q$ is von Neumann regular by hypothesis, we obtain from Proposition \ref{prop:pushoutVNregular} that $U$ is von Neumann regular and that the map $\mathcal V (Q) \to \mathcal V (U)$ is the coequalizer of the (non-commutative) diagram
\[
     \begin{tikzpicture}
     \matrix (m) [matrix of math nodes,row sep=3em,column sep=3em,minimum width=1em]
     {
        \mathcal V(I_1)   &   \mathcal V(I_1) \\
        \mathcal V(I_2) &\mathcal V(Q)\\
     };
     \path[-stealth]
     (m-1-1) edge [->] node  [above] {=} (m-1-2)
     edge [->] node [left] {$\mathcal V (\varphi )$} (m-2-1)

     (m-1-2) edge [->] node [right] {$\iota_1$} (m-2-2)

     (m-2-1)       edge [->] node  [below] {$\iota_{2}$} (m-2-2)
     ;
     \end{tikzpicture}
     \]
But now observe that since $Q$ is regular and the natural map $M(E_1,C^1)\to \mathcal V (Q)$ is an isomorphism, this diagram translates to the diagram \eqref{eq:ncdiagram-monoids}.
Therefore we obtain from Proposition \ref{prop:PushOutMonoid} a natural isomorphism $\eta \colon M(E_2,C^2) \to \mathcal V (U)$. It is easily seen that $\eta$ is given by
$\eta (a_{w'}) = [\alpha (w)]\in \mathcal V (U)$, for $w'\in E_2^0$, where $w$ is the unique vertex in $E_1^0\setminus T^0(v_2)$ such that $ \phi^0(w)= w'$  (the fact that there is such a unique vertex $w$
follows from Definition \ref{def:crownedpair}).

 The rest of the proof consists of showing that $Q_K(E_2,C^2)\cong eUe$ for a certain full idempotent $e\in  \mathcal M (U)$. To this end, we define the full idempotent element $$e:=\left (\begin{array}{ll}
      1_{\mathcal M (Q_1)} & 0\\
       0& 1_{\mathcal M (Q_2)} -e_2\\
      \end{array}\right )\in \mathcal M (U) .$$ 
      Using it, we show that the map $$\delta\colon Q_K(E_2,C^2)\to e Ue,$$ defined below, is an algebra isomorphism such that the composition 
      $$M(E_2,C^2) \overset{\eta}{\to} \mathcal V (U) \cong \mathcal V (eUe) \overset{\mathcal V(\delta^{-1})}{\to} \mathcal V (Q_K(E_2,C^2))$$
is the natural map $M(E_2,C^2)\to \mathcal V (Q_K(E_2,C^2))$.
      This will imply that $Q_K(E_2,C^2)$ is regular and that the natural map $M(E_2,C^2)\to \mathcal V (Q_K(E_2,C^2))$ is
 an isomorphism.

     Based on the description of $Q_K(E_2,C^2)$ provided in section \ref{sect:algebras}, we define $\delta$ depending on the vertices and edges used in its definition. 
     For the {\bf vertices}, let $w'\in E_2^0$. Then, by the conditions in Definition \ref{def:crownedpair}, there is a unique $w\in E_1^0\setminus T^0(v_2)$ such that $\phi^0(w)= w'$, and we define
     $\delta(w')=\alpha(w)\in eUe$. Similarly, we set $\delta (t_i^{w'}) = \alpha (t_i^w)$, where $w'$ and $w$ are as above.  

     For the {\bf edges}, we differentiate two different types of edges. If $e'\in E_2^1$ is of the form $\phi^1(e)$ for $e\in E_1^1$ satisfying that $s(e), r(e) \in E_1^0\setminus T^0(v_2)$, then we 
     define $\delta (e') = \alpha (e)\in eUe$ and $\delta ((e')^*)= \alpha (e^*)$. Otherwise, again using the conditions in Definition \ref{def:crownedpair}, there is $\beta\in E_1^1$ with $s(\beta) \in (E_1')^0$ and $r(\beta ) \in T^0(v_2)$ such that
     $\phi^1 (\beta) = e'$. Note that $\beta$ is necessarily a connector. In this case, we define:
          $$\delta(e')= \delta (\phi^1(\beta))=\left (\begin{array}{ll}
       \qquad 0 & 0\\
       \beta\otimes \varphi^{-1} (r(\beta)) & 0\\
      \end{array}\right ), \qquad \delta((e')^*)=\left (\begin{array}{ll}
       0& \varphi^{-1} (r(\beta))\otimes \beta^*\\
       0 & \qquad 0\\
      \end{array}\right ).$$
Note that if $e'= \phi^1(\beta) $ as above, we have 
$$\delta (e')\delta ((e')^*) =  \left (\begin{array}{ll}
       \qquad 0 & 0\\ 
       \beta\otimes \varphi^{-1} (r(\beta)) & 0\\
      \end{array}\right ) \left (\begin{array}{ll}
       0& \varphi^{-1} (r(\beta))\otimes \beta^*\\
       0 & \qquad  0\\
       \end{array}\right ) = \alpha (\beta \beta^*) .$$

One can easily see that the defining relations of $\mathcal S_K(E_2,C^2)$, given in \eqref{pt:KeyDefs},  are preserved by $\delta$.  Let us just check that if $w'\in E_2^0$ and
$X\in C_{w'}$, then 
$$\delta (w') = \sum_{x\in X} \delta (x) \delta (x^*).$$
Let $w$ be the unique vertex in $E_1^0\setminus T^0(v_2)$ such that $\phi^0 (w) = w'$. Since $\phi$ is a cover map, there is a unique $Y\in C^1_w$ such that $\phi^1 (Y) =X$, and $\phi^1$ restricts to a bijection from $Y$ to $X$.
If $r(e)\in E_1^0\setminus T^0(v_2)$ for all $e\in Y$, then the above relation is straightforward. Otherwise, by condition (v) in Definition \ref{def:crownedpair}, there is a non-trivial partition
$Y=Y_1\sqcup Y_2$ such that $r(Y_1) \subseteq E_1^0\setminus (T^0(v_1) \cup T^0(v_2))$ and $r(Y_2) \subseteq T^0(v_2)$. Therefore, using the above observation, we get 
$$\sum _{x\in X} \delta (x)\delta (x^*) = \sum _{y\in Y} \delta (\phi^1 (y))\delta (\phi^1 (y)^*) = \sum _{y\in Y_1} \alpha (yy^*) + \sum _{y\in Y_2} \alpha (yy^*)= \alpha ( \sum_{y\in Y} yy^*)= \alpha (w) = \delta (w').$$
This shows the desired equality. 

We have thus a well-defined $K$-algebra homomorphism $\delta \colon \mathcal S _K(E_2,C^2) \to eUe$, and it is readily seen that this map extends to a $K$-algebra homomorphism, also denoted by $\delta$, from
$Q_K(E_2,C^2)$ to $eUe$.

      Let us show that $\delta\colon Q_K(E_2,C^2) \to eUe$ is an isomorphism. To prove this, we will use the following diagram:
      \[
      \begin{tikzpicture}
      \matrix (m) [matrix of math nodes,row sep=3em,column sep=4em,minimum width=2em]
      {
      	\langle T^0(v)\rangle&Q_K(E_2,C^2)& Q_K(E_2,C^2)/\langle T^0(v)\rangle\\
      	 eJe&  eUe & eUe/eJe\\};
      
      \path[-stealth]
      
      (m-1-1) edge [->] node  {} (m-1-2)
      edge node [right] {$\delta_{T(v)}$} (m-2-1)
      (m-1-2) edge [->] node  {} (m-1-3)
     
      edge node [right] {$\delta$} (m-2-2)
     (m-1-3) edge [->] node [right] {$\overline\delta$} (m-2-3)
      (m-2-1) edge [->] node  {} (m-2-2)
      
      (m-2-2) edge [->] node  {} (m-2-3)
    ;
      
      \end{tikzpicture}.
      \]
       Since
       $$Q_K(E_2,C^2)/\langle T^0(v)\rangle \cong Q/(I_1+I_2) \cong U/J \cong eUe/eJe , $$
       we have that the map $\ol{\delta}$ is an isomorphism. 
 We will conclude that $\delta$ is an isomorphism proving that $\delta_{T(v)}$ is also an isomorphism.
For this we will rely on the decomposition \eqref{eq:A22} (see Theorem \ref{thm:reducedmonomialsdirectsum}). 
Therefore we write 
$$Q_K(E_2,C^2)= \bigoplus _{(\gamma_1 , \gamma_2)\in \mathcal P} Q_{(\gamma_1,\gamma_2)},$$
where $\mathcal P $ is the set of pairs of finite paths $(\gamma_1,\gamma_2)$ in the reduced graph $(E_2)_{{\rm red}}$ with $r(\gamma_1) = r(\gamma_2)$.
Note that $$\langle T^0(v)\rangle = \bigoplus _{(\gamma_1,\gamma_2) \in \mathcal P : r(\gamma_1)=r(\gamma_2)\in T^0(v)} Q_{(\gamma_1,\gamma_2)}.$$ 
We classify the pairs $(\gamma_1,\gamma_2)\in \mathcal P $ such that $r(\gamma_1)=r(\gamma_2)\in T^0(v)$ into four classes.
For this, it is convenient to introduce a bit of terminology: let us say that a finite path $\gamma = \beta_1\beta_2\cdots \beta _n$, with $r(\gamma) \in T^0(v)$
{\it crosses the border through $v_2$} if there is a (necessarily unique) $i\in \{ 1,\dots , n\}$ such that $\beta _i = \phi^1(\beta)$ for $\beta \in E_1^1$ such that
$s(\beta ) \in (E_1')^0$ and $r(\beta) \in [v_2]$. 

Let $\gamma_1 = \beta_1\beta_2 \cdots \beta _r$ and $\gamma_2 = \beta _1'\beta_2'\cdots \beta_s'$ for connectors $\beta_i, \beta_j'$ in $E_2$. 
Then the four classes are as follows:        
      \begin{enumerate}[\rm(i)]
      \item $\gamma_1$ and $\gamma _2$ do not cross the border through $v_2$.
      \item $\gamma_1$ crosses the border through $v_2$ and $\gamma_2$ do not cross the border thorugh $v_2$.
      \item  $\gamma_1$ do not cross the border through $v_2$ and $\gamma_2$ crosses the border through $v_2$.
      \item  Both $\gamma_1$ and $\gamma_2$ cross the border through $v_2$. 
      \end{enumerate}
      Now each of the above four classes corresponds, through $\delta|_{T(v)}$, to a different corner in the ring 
      $$eJe = \left (\begin{array}{ll}
       \qquad I_1 & \qquad  N(1_{\mathcal M (Q_2)} -e_2) \\
       (1_{\mathcal M (Q_2)} -e_2)M & (1_{\mathcal M (Q_2)} -e_2)I_2(1_{\mathcal M (Q_2)} -e_2)\\
      \end{array}\right ).$$ 
      It follows that $\delta |_{T(v)}$ is an isomorphism from $\langle T^0(v) \rangle $ onto $eJe$. Therefore $\delta $ is an isomorphism, as required. 
      
      Finally we check that the composition $\mathcal V (\delta^{-1}) \circ \eta $ is the natural map $M(E_2,C^2) \to \mathcal V (Q_K(E_2,C^2))$. 
      Indeed, for $w'\in E_2^0$, let $w$ be the unique vertex in $E_1^0\setminus T^0(v_2)$ such that $\phi^0(w) = w'$. Then we have
      $$(\mathcal V (\delta^{-1})\circ \eta )(a_{w'})= \mathcal V (\delta^{-1}) ([\alpha (w)]) = [w'], $$
      showing the result.
               \end{proof}

We can finally complete the proof of our main result.

\medskip

\noindent{\it Proof of Theorem \ref{ThmA}:} Let $(E,C)$ be an adaptable separated graph. As we have already remarked (see Remark \ref{rem:crownedremark}), the proof of Theorem \ref{thm:covering-theorem} enables us to build a 
finite chain of adaptable separated graphs $(F_i,D^i)$, for $i=0,\dots , m$, such that $(F_0,D^0)= (\tilde{E},\tilde{C})$ satisfies condition {\rm (F)}, and
$(F_m,D^m)=(E,C)$. Moreover, each pair $((F_i,D^i), (F_{i+1}, D^{i+1}))$, $i=0,\dots ,m-1$, is a crowned pair in the sense of Definition \ref{def:crownedpair}.

By Theorem \ref{thm:main-for-F}, $Q_K(F_0,D^0)$ is von Neumann regular and the natural map $M(F_0,D^0)\to \mathcal V (Q_K(F_0,D^0))$ is a monoid isomorphism. 
Now using Theorem \ref{thm:main-pushoutthem} $m$ times, we get the same conclusion for $Q_K(E_m,C^m)=Q_K(E,C)$. This completes the proof.\qed

\section*{Acknowledgments}

This research project was initiated when the authors were at the Centre de Recerca Matem\`atica as part of the Intensive Research Program \emph{Operator
	algebras: dynamics and interactions} in 2017, and continued at the Universidad de Cádiz  due to an invitation of the third author to the first two authors in 2018.  The work developed was significantly supported by the
research environment and facilities provided in both centers.
We would also like to thank the anonymous referee for very useful comments and suggestions.
                            

\end{document}